\numberwithin{equation}{section}
\newcommand{\bigslant}[2]{{\left.\raisebox{.2em}{$#1$}\middle/\raisebox{-.2em}{$#2$}\right.}}
\newtheorem{definition}{Definition}[section]
\newtheorem{theoreme}[definition]{Theorem}
\newtheorem{proposition}[definition]{Proposition}
\newtheorem{lemme}[definition]{Lemma}
\newtheorem{corollaire}[definition]{Corollary}
\theoremstyle{remark}
\newtheorem*{remarque}{Remark}
\newtheorem{example}{Example}
\title{The modular class of a singular foliation}
\author{Sylvain Lavau} 
\affil{{\small \emph{Euler Institute (EIMI) \& Steklov Institute}, 
 St-Petersburg, Russian Federation.}\\
 \small \emph{Aristotle University of Thessaloniki}, Thessaloniki, Greece.}
\date{}
\begin{document}
\maketitle

\begin{abstract}
The modular class of a regular foliation is a cohomological obstruction to the existence of a volume form transverse to the leaves which is invariant under the flow of the vector fields of the foliation. It has been used in dynamical systems and operator theory, from which the name \emph{modular} comes from. 
By drawing on the relationship between Lie algebroids and regular foliations, this paper extends the notion of modular class to the realm of singular foliations. The singularities are dealt with by replacing the singular foliations by any of their universal Lie $\infty$-algebroids, and by picking up the modular class of the latter. The geometric meaning of the modular class of a singular foliation $\mathcal{F}$ is not as transparent as for regular foliations: it is an obstruction to the existence of a universal Lie $\infty$-algebroid  of $\mathcal{F}$ whose  Berezinian line bundle is a trivial $\mathcal{F}$-module. This paper  illustrates the relevance of using universal Lie $\infty$-algebroids to extend mathematical notions from regular foliations to singular ones, thus paving the road to defining other characteristic classes in this way.

\end{abstract}




\tableofcontents

\section{Introduction}\label{introduction}

Singular foliations form a class of objects that appear in geometry, operator algebra, dynamical systems, etc. that generalize the notion of regular foliation by allowing  the leaves to have non-constant dimension. This property is manifest at the infinitesimal level: a regular foliation is primarily defined as a vector subbundle of the tangent bundle, 
whereas a singular foliation can be considered as a particular subsheaf of the sheaf of vector fields, so that the rank of the induced distribution  may not be constant.
Due to the difficulty to properly  handle the singularities, many notions and theories that are otherwise well established for regular foliations have not been generalized to the singular case yet.
In this paper we intend to discuss the notion of modular class associated to a singular foliation, by drawing on previous work \cite{laurent-gengouxUniversalLieInfinityAlgebroid2020} that allows to `regularize' the foliation.

A regular foliation induces an equivalence relation on the manifold: that of belonging to the same leaf. The quotient of the manifold by the equivalence relation -- the \emph{leaf space} --  is not necessarily a manifold, hence the difficulty of making sense of a measure -- or volume form -- on it. An alternative way of characterizing such a measure is to define a measure on the transversal of the leaves (if they exist), that would be invariant with respect to the flow of the vector fields defining the (regular) foliation. Under appropriate assumptions on the leaves, such a \emph{transverse measure} would then be considered as a measure on the leaf space. Transverse measures of regular foliations have proved to be very important in the study of dynamical systems on differentiable manifolds \cite{planteFoliationsMeasurePreserving1975, ruelleCurrentsFlowsDiffeomorphisms1975, hurderGodbillonMeasureAmenable1986}.
 The existence of a globally defined transverse measure is conditioned to the vanishing of a certain cohomology class of the foliated de Rham cohomology: the \emph{modular class} (or \emph{Reeb class}).
 
The name \emph{modular} originally refers to the Tomita-Takesaki theory in non-commutative geometry, which classifies von Neumann algebras from their so-called \emph{modular automorphisms}. This theory dating back from the end of the 1960 has been widely fueled  by examples taken from regular foliations, since any regular foliation equipped with a transverse measure canonically induces a von Neumann algebra \cite{connesNeumannAlgebraFoliation1978, yamagamiModularCohomologyClass1986}. 
The notion of modular automorphisms has been transported to the field of Poisson geometry \cite{weinsteinModularAutomorphismGroup1997}, since the notion of modular flow in Poisson geometry forms a classical limit of the notion of modular automorphism in the theory of von Neumann algebras. Due to the proximity between Poisson manifolds and Lie algebroids, the notion of modularity was extended to Lie algebroids and Lie groupoids \cite{evensTransverseMeasuresModular1999, kosmann-schwarzbachPoissonManifoldsLie2008}, generalizing the already existing and well-known notion of modular function of Lie groups which evaluates the existence of a bi-invariant Haar measure.
The modular class of Lie algebroids also restricts to the well-known notion of modular class of regular foliations when the Lie algebroid in question is a foliation Lie algebroid.  This allows to think about the modular class of a Lie algebroid as an obstruction to the existence of a certain invariant measure on the differentiable stack associated to this Lie algebroid \cite{weinsteinVolumeDifferentiableStack2009, crainicMeasuresDifferentiableStacks2020}. 

Given this background, it seems legitimate to generalize the notion of modular class to the singular setting, e.g. to singular foliations. The definition of the modular class of a regular foliation cannot straightforwardly apply to the singular case because the rank of the induced distribution of tangent vectors is not constant. Hence, the construction using Bott connections and transversals to the leaves cannot be imported as such. Since the modular class of a regular foliation coincides with the modular class of its associated foliation Lie algebroid, one way to overcome this difficulty is to find an analogue of the foliation Lie algebroid in the singular case, namely: a \emph{universal Lie $\infty$-algebroid} associated to the singular foliation \cite{laurent-gengouxUniversalLieInfinityAlgebroid2020}. A Lie $\infty$-algebroid is to a Lie algebroid what a (non-positively graded) $L_\infty$-algebra is to a Lie algebra.   
A universal Lie $\infty$-algebroid of a singular foliation $\mathcal{F}$ is a projective resolution of $\mathcal{F}$ by locally free $\mathcal{C}^\infty(M)$-modules which, by a transfer theorem, can be equipped with a Lie $\infty$-algebroid structure lifting the Lie bracket of vector fields. The name \emph{universal} is justified by the observation that two such universal Lie-$\infty$ algebroids resolving the same singular foliation are homotopically equivalent~\cite{laurent-gengouxUniversalLieInfinityAlgebroid2020}.

In a recent work, Caseiro and Laurent-Gengoux have extended the notion of modular class from Lie algebroids to Lie $\infty$-algebroids: it is an obstruction to the existence of an invariant section of the associated Berezinian line bundle \cite{caseiroModularClassLie2022}. 
They show that the modular class neither depends on the choice of the resolution, nor on the choice of the section of its associated Berezinian, etc. This allows to obtain a well-defined and unique notion of modular class of a singular foliation by setting it to be the modular class of any of its universal Lie $\infty$-algebroids. More precisely, it is the unique cohomology class in the foliated cohomology of $\mathcal{F}$ whose representant in the cohomology of any universal Lie $\infty$-algebroid $E$ of $\mathcal{F}$ is precisely the modular class of $E$ (see Definition \ref{def:modclasssing}). In this context, the modular class of a singular foliation $\mathcal{F}$ is an obstruction the the existence of a trivial $\mathcal{F}$-module structure on the Berezinian of any of its universal Lie $\infty$-algebroids (see Proposition \ref{plop}). In full rigor, we only define modular classes of \emph{solvable} singular foliations, i.e. those foliations which admit at least one universal Lie $\infty$-algebroid of finite length. This is a necessary condition for the Berezinian to be well-defined.

This notion of modular class for singular foliations coincides with that of regular foliations when $\mathcal{F}$ is regular and the Lie $\infty$-algebroid $E$ is the foliation Lie algebroid or, more generally, is a resolution of the latter. However, contrary to the regular case, singular foliations preserve their specificity: what is central in singular foliations is not the set of  leaves but the set of vector fields generating the singular foliation. There are indeed examples of singular foliations whose regular leaves admit a transverse volume form but which do not necessarily admit invariant sections of the Berezinian of \emph{any} of its universal Lie $\infty$-algebroids.  This curiosity can be explained by the behavior  at the singularities of the vector fields generating the singular foliation.
 This paper thus illustrates the relevance of using universal Lie $\infty$-algebroids to extend mathematical notions from regular foliations to singular ones \cite{laurent-gengouxUniversalLieInfinityAlgebroid2020}, thus paving the road to defining other characteristic classes in this way.
 
Part of the material of this paper is already known  but is scattered in the literature. It is here presented in a unified fashion, with natural and adapted conventions susceptible to be reused for further treatment of characteristic classes of singular foliations. 
Section \ref{modularclassLieinf} is a reminder on generalities on Lie $\infty$-algebroids and their representations up to homotopy. This material is already known but has been included here for self-consistency of the paper, and because it has not been presented under such conventions. Subsection \ref{singlg} recalls some basics about singular foliations and elaborates about the relationship between their foliated cohomology and the cohomology of their (universal) Lie $\infty$-algebroids.  
Section \ref{sec3} is dedicated to presentation of the notion of modular class of Lie $\infty$-algebroids and singular foliations. 
Subsection \ref{sec:char} does not contain new important results but adapts the material of \cite{caseiroModularClassLie2022} to the skew-symmetric convention for Lie $\infty$-algebroids. 
Drawing on the material introduced in subsections \ref{singlg} and \ref{sec:char}, subsection~\ref{subsecc3} defines the modular class of singular foliations (Definition~\ref{def:modclasssing}), its meaning (Proposition~\ref{plop}), and its main relationship with that of regular foliations (Proposition \ref{prop:ok}).  Section~\ref{exxxon} is entirely dedicated to examples of various kinds. In particular, several cases are investigated in order to illustrate the extent to which the notion of modular class of singular foliations is meaningful.



\section{Lie $\infty$-algebroids and singular foliations}\label{modularclassLieinf}

Throughout the paper,  we will stay in the category of real smooth manifolds and will denote by $\mathcal{C}^\infty$ the sheaf of real valued smooth functions on a given manifold. 
In the present section we give the necessary mathematical background and we introduce notations that will be used later~on.

\subsection{Generalities on Lie $\infty$-algebroids}\label{secccc}

There exists
two equivalent conventions for $L_\infty$-algebra brackets:  the graded skew-symmetric version  is the original and most natural convention \cite{ladaIntroductionSHLie1993, ladaStronglyHomotopyLie1995}, while
the graded symmetric version can be more useful for computations; the latter convention being sometimes called $L_\infty[1]$-algebras~\cite{schatzBFVcomplexHigherHomotopy2008, mehtaAlgebraActions2012}.
An $L_\infty$-algebra structure on $V$ is equivalent to an $L_\infty[1]$-algebra structure on the desuspension of $V$, denoted $s^{-1}V$ or $V[1]$\footnote{In the present paper, we adopt the convention that the suspension operator $s$ (also denoted $[-1]$) increases the degree of the associated space by $1$, while the desuspension operator $s^{-1}$ (also denoted $[1]$) decreases the degree by $1$. In other words, $(sV)_{i}=V_{i-1}$ while $(s^{-1}V)_{i}=V_{i+1}$.} \cite{voronovHigherDerivedBrackets2005}. In this section -- and more generally in this paper -- we will restrict ourselves to the use of the graded skew-symmetric convention to stick with the analogy with Lie algebras, as it seems more natural and easier to understand for newcomers. For an equivalent formulation under the symmetric convention, see \cite{laurent-gengouxUniversalLieInfinityAlgebroid2020}.


\begin{definition} \cite{ladaIntroductionSHLie1993, ladaStronglyHomotopyLie1995}
An \emph{$L_\infty$-algebra} is a graded vector space $ {V} = \bigoplus_{ j\in\mathbb{Z}} {V}_{j}  $ 
together with a family of graded skew-symmetric $k$-multilinear maps $ \big( l_k\big)_{k \geq 1} $ of  degree $2-k$, called \emph{$k$-brackets}, which 
satisfy a set of compatibility conditions called \emph{higher Jacobi identities}.
For all $k\geq1$ and for every $k$-tuple of homogeneous elements $a_{1},\ldots,a_{k}\in {V}$, they are given by:
\begin{equation}\label{superjacobi}
\sum_{j=1}^{k}(-1)^{j(k-j)}\sum_{\sigma\in {Un}(j,k-j)}\epsilon(\sigma)l_{k-j+1}\big(l_j(a_{\sigma(1)},\ldots,a_{\sigma(j)}),a_{\sigma(j+1)},\ldots,a_{\sigma(k)}\big)=0
\end{equation}
where  $Un(j,k-j)$ denotes the set of \emph{$(j,k-j)$-unshuffles}, and where $\epsilon(\sigma)$ is the sign induced by the permutation of the elements $a_1,\ldots, a_k$ under $\sigma$ in the exterior algebra of $V$:
\begin{equation}
 a_{\sigma(1)}\wedge\ldots \wedge a_{\sigma(k)} =\epsilon(\sigma)\, a_1\wedge\ldots\wedge a_k
\end{equation}
\end{definition}

\begin{remarque}
The definition implies that the 1-bracket is a differential on $V$ that it is compatible with the 2-bracket, i.e. it is a derivation of the 2-bracket:
\begin{equation}\label{leibnizzzz}
l_1\big(l_2(a,b)\big)=l_2\big(l_1(a),b\big)+(-1)^{|a|}l_2\big(a,l_1(b)\big)
\end{equation}
Moreover, the 2-bracket is a Lie bracket \emph{up to homotopy}:
\begin{equation}\label{jacobiiiii}
l_2\big(l_2(a,b),c\big)+\circlearrowleft\ =-[l_1,l_3](a,b,c)
\end{equation}
where on the right-hand side, $[\,.\,,.\,]$ symbolizes the graded commutator of operators on $\mathrm{End}(\wedge^\bullet V)$.
\end{remarque}

We can now define what is a \emph{Lie $\infty$-algebroid}:

\begin{definition} \label{def:Linftyoids}
 A Lie $\infty$-algebroid over a smooth manifold $M$ is a triple $\big(E,(l_k)_{k\geq1},\rho\big)$, where $E=(E_{-i})_{i\geq0}$ is a non-positively graded vector bundle over $M$, $(l_k)_{k\geq1}$ is a family of $\mathbb{R}$-linear brackets on  the sheaf $\Gamma(E)$ of sections of $E$ defining an $L_\infty$-algebra structure on $\Gamma(E)$, 
and $\rho\colon E_{0} \to TM$ is a vector bundle morphism, called the \emph{anchor map},
such that the following items hold true:
\begin{enumerate}
\item For every $k\geq1$ the $k$-brackets $l_k$ are always $\mathcal{C}^\infty$-linear in each of their $k$ arguments, except when $k=2$ and at least one of its two entries has degree zero \begin{equation}\label{robinson}
 l_2(a,fb)=fl_2(a,b)+ \rho(a)[f] \, b
 \end{equation}
 for all $a \in \Gamma(E_{0})$, $b \in \Gamma(E)$ and $f\in\mathcal{C}^\infty$;
\item The anchor map satisfies, for every $a\in \Gamma(E_{-2})$:
\begin{equation}\label{eq:direct}
\rho \big(l_1(a)\big)=0
\end{equation}
\end{enumerate}
A Lie $\infty$-algebroid is said to be a \emph{Lie $n$-algebroid} when ${E}_{-i}=0$ for all $i \geq n$. In that case we say that $E$ is of \emph{length $n$}. A Lie $1$-algebroid is called a \emph{Lie algebroid}.
\end{definition}

\begin{remarque}\begin{enumerate}

\item Equation \eqref{robinson} together with Equation \eqref{jacobiiiii}
 imply that the anchor map and the 2-bracket on $\Gamma(\wedge^2E_{0})$ satisfy the same relationship as in the Lie algebroid case:
 \begin{equation}\label{jpp}
 \rho\big(l_2(a,b)\big)=\big[\rho(a),\rho(b)\big]
 \end{equation}
 for every  $a,b\in\Gamma(E_{0})$, and where $[\,.\,,.\,]$ is the Lie bracket of vector fields on $M$.
 
 \item The $\mathcal{C}^\infty$-linearity of $l_1$ together with the Jacobi identity $l_1^2=0$ implies that $l_1$ defines a chain complex of vector bundles:
\begin{center}
\begin{tikzcd}[column sep=0.7cm,row sep=0.4cm] 
\ldots\ar[r,"l_1"]&E_{-2}\ar[r,"l_1"]&E_{-1}\ar[r,"l_1"]&E_{0}\ar[r,"\rho"]&TM\ar[r]&0
\end{tikzcd}
\end{center}
We call the triple $(E,l_1,\rho)$ the \emph{linear part} of the Lie $\infty$-algebroid.

\end{enumerate}
\end{remarque}

Given a Lie $\infty$-algebroid $E=\bigoplus_{i\geq0}E_{-i}$, we can define a cohomology generalizing the Lie algebroid cohomology and the Chevalley-Eilenberg cohomology of Lie algebras. We call \emph{forms on $E$} elements  of the graded vector space $\Omega^\bullet(E)=\bigoplus_{k\geq1}\Omega^k(E)$, where $\Omega^k(E)=\Gamma\big(\bigwedge^k E^*\big)$. This space carries a bidegree: the form degree $k$ and the degree $p$ induced by the  grading of $E^*$, which is reverse than that of $E$.  More precisely, since the Lie $\infty$-algebroid  $E$ is non-positively graded, the grading $p$ takes values in non-negative integers. Then, we say that a form $\eta$ is a \emph{$k$-form of degree $p$} 
 if $\eta\in\Omega^k(E)\big|_p=\Gamma\big(\bigwedge^k E\big)\big|_p$. The sum $h=k+p$ is called the \emph{height} of $\eta$, and we write $\Omega^k_h(E)$ instead of $\Omega^k(E)_p$ if one wants to emphasize the use of the height instead of the degree on $E^*$ (which can always be found again by setting $p=h-k$). For every $h\geq0$, one sets
 \begin{equation*}
\Omega_h(E)=\bigoplus_{k\geq0}\Omega^k_h(E)\qquad \text{and}\qquad \Omega_\blacktriangle(E)=\bigoplus_{h\geq0}\Omega_h(E)\end{equation*}

\begin{remarque}
The height is defined so that, when $E$ is a Lie algebroid (a Lie $\infty$-algebroid concentrated in degree 0, then), the height of a $k$-form is precisely $k$. This is the correct notion to use when generalizing results on Lie algebroid cohomology to Lie $\infty$-algebroid cohomology. 
\end{remarque}

We define a sequence of operators $\big({d}^{(s)}_E\colon\Omega^\bullet(E)\to\Omega^{\bullet+s}(E)\big)_{s\geq0}$ by their action on $k$-forms. The index $s$ is called \emph{the arity} and measures by how much the form-degree is increased under the action of $d_E^{(s)}$ on a  $k$-form $\eta\in\Omega^k(E)$, for $k\geq1$:
\begin{align}
d^{(0)}_E\eta\big(a_1,\ldots,a_k\big)&=\sum_{\sigma\in\, Un(1,k-1)}\epsilon(\sigma)\,\eta\big(l_1(a_{\sigma(1)}),a_{\sigma(2)},\ldots,a_{\sigma(k)}\big)\label{eqdiff1}\\
d^{(1)}_E\eta\big(a_1,\ldots,a_{k+1}\big)&=\sum_{\sigma\in\, Un(1,k)}\epsilon(\sigma)\,\rho(a_{\sigma(1)})\big[\eta\big(a_{\sigma(2)},\ldots,a_{\sigma(k+1)}\big)\big]\label{eqdiff2}\\
&\hspace{1cm}-\sum_{\sigma\in\, Un(2,k-1)}\epsilon(\sigma)\,\eta\big(l_2(a_{\sigma(1)},a_{\sigma(2)}),a_{\sigma(3)},\ldots,a_{\sigma(4)}\big)\nonumber
\end{align}
and, for every $s\geq2$, as:
\begin{equation*}
d^{(s)}_E\eta\big(a_1,\ldots,a_{k+s}\big)=(-1)^{s}\sum_{\sigma\in\, Un(s+1,k-1)}\epsilon(\sigma)\,\eta\big(l_{s+1}(a_{\sigma(1)},\ldots,a_{\sigma(s+1)}),a_{\sigma(s+2)},\ldots,a_{\sigma(k+s)}\big)
\end{equation*}
On zero-forms, i.e. smooth functions on $M$, the only operator acting is $d^{(1)}_E\colon\mathcal{C}^\infty(M)\longrightarrow\Gamma\big((E_{0})^*\big)$, via the anchor map:
\begin{equation}\label{ancre}
d^{(1)}_Ef(a)=\rho(a)[f]
\end{equation}
for every $f\in\mathcal{C}^\infty(M)$, and $a\in\Gamma(E_0)$.
%

From these formulas, the higher Jacobi identities \eqref{superjacobi}  together with Equation \eqref{jpp} -- which is itself a consequence of the former -- imply that ${d}_E=\sum_{s\geq0} {d}^{(s)}_E$ is a differential, i.e. that $d_E^2=0$. 
The degree of this operator cannot be based on the form degree alone as \begin{equation*}d_E(\Omega^\bullet(E))\subset \bigoplus_{s\geq0}\Omega^{\bullet+s}(E)\end{equation*} but a straightforward calculation shows that $d^E$ increases the height by $1$, turning $\Omega_\blacktriangle(E)$ into a co-chain complex.
\begin{definition}\label{defccco}
The cohomology of the co-chain complex $\big(\Omega_\blacktriangle(E),{d}_E\big)$ is called the \emph{Lie $\infty$-algebroid cohomology of $E$}, and is denoted $H^\blacktriangle(E)$, or $H(E)$ for short. 
\end{definition}


The Lie $\infty$-algebroid cohomology is analogous to the Chevalley-Eilenberg cohomology for Lie algebras, and coincides with the Lie algebroid cohomology when $E$ is a Lie algebroid. 
The one-to-one correspondence between Lie algebroid structures on a vector bundle $A$ and a differential graded manifold structure on $A[1]$ \cite{vaintrobLieAlgebroidsHomological1997}
 extends to the Lie $\infty$-algebroid context: 

\begin{theoreme} \cite{voronovManifoldsHigherAnalogs2010}
Let $E=\bigoplus_{i\geq0} E_{-i}$ be a non-positively graded vector bundle over $M$. Then there is a one-to-one correspondence between Lie $\infty$-algebroid structures on $E$ and linear operators $d_E\colon\Omega_{\blacktriangle}(E)\to\Omega_{\blacktriangle+1}(E)$ satisfying the homological condition $d_E^2=0$.
\end{theoreme}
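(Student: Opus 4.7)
The plan is to prove the correspondence in both directions, interpreting the algebra $\Omega^\bullet(E)=\Gamma(\wedge^\bullet E^*)$ as the functions on the graded manifold $E[1]$ and the operators $d_E$ in the statement as graded derivations of the wedge product of bidegree $(0,1)$ in (form degree, $E^*$-degree), equivalently, of height $+1$; this is the standard setting for Voronov's theorem, applied here in the skew-symmetric convention.

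\textbf{Forward direction.} Given a Lie $\infty$-algebroid structure $(l_k,\rho)$ on $E$, I define $d_E=\sum_{s\geq 0} d_E^{(s)}$ by the explicit formulas \eqref{eqdiff1}, \eqref{eqdiff2} and the $s\geq 2$ formula in the text, together with \eqref{ancre} on zero-forms, and extend by the graded Leibniz rule. A direct check shows each $d_E^{(s)}$ raises the height by $1$, hence so does $d_E$. It then remains to verify $d_E^2=0$. Expanding by arity, $d_E^2=\sum_{s\geq 0}\bigl(\sum_{p+q=s} d_E^{(p)}d_E^{(q)}\bigr)$, and it suffices to evaluate each arity-$s$ component on generators, i.e.\ on $\mathcal{C}^\infty(M)$ and on $\Gamma(E^*)$. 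Dualization shows that the arity-$s$ part acting on $\Gamma(E^*)$ is precisely the higher Jacobi identity \eqref{superjacobi} of rank $s+2$, while the arity-$s$ part acting on $\mathcal{C}^\infty(M)$ reproduces the anchor compatibilities \eqref{jpp} (for $s=1$) and \eqref{eq:direct} (for $s=0$), both of which are already assumed or follow from the higher Jacobi identities.

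\textbf{Reverse direction.} Given a height-$+1$ derivation $d_E$ on $\Omega^\bullet(E)$ with $d_E^2=0$, I decompose it by arity, $d_E=\sum_{s\geq 0} d_E^{(s)}$, where $d_E^{(s)}$ raises the form degree by $s$ and therefore the $E^*$-degree by $1-s$. Since $d_E^{(s)}$ is a derivation, it is fully determined by its action on the generators $\mathcal{C}^\infty(M)$ and $\Gamma(E^*)$. I then read off the data of a Lie $\infty$-algebroid: the anchor $\rho$ from $d_E^{(1)}$ acting on $\mathcal{C}^\infty(M)$ via \eqref{ancre}, the unary bracket $l_1$ by dualizing $d_E^{(0)}$ on $\Gamma(E^*)$, and for $s\geq 1$ the $(s+1)$-bracket $l_{s+1}$ by dualizing $d_E^{(s)}$ on $\Gamma(E^*)$. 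Graded skew-symmetry of the $l_k$ is automatic because they are defined through the dual pairing with the exterior algebra $\wedge^{s+1}E^*$.

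\textbf{Verification of the axioms.} The $\mathcal{C}^\infty$-linearity of the $l_k$, except for $l_2$ having a degree-zero entry, reflects that $d_E^{(s)}$ acts $\mathcal{C}^\infty$-linearly on $\Gamma(E^*)$ for $s\neq 1$; the exceptional Leibniz rule \eqref{robinson} is exactly the statement that $d_E^{(1)}$ is a derivation whose restriction to $\mathcal{C}^\infty(M)$ is $\rho$. The identity \eqref{eq:direct} corresponds to the component of $d_E^2=0$ of bidegree $(1,0)$ on $\mathcal{C}^\infty(M)$, and expanding $d_E^2=0$ arity by arity on $\Gamma(E^*)$ yields the full family of higher Jacobi identities \eqref{superjacobi}. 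The two constructions are mutually inverse because a height-$+1$ derivation of $\Omega^\bullet(E)$ is determined by its values on the generators, which match tautologically the data $(l_k,\rho)$ produced from it.

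\textbf{Main obstacle.} The substantive part of the argument is purely combinatorial: matching the Koszul/unshuffle signs between the formulas \eqref{eqdiff1}--\eqref{eqdiff2} (and their $s\geq 2$ analogues) and the higher Jacobi identities \eqref{superjacobi} under the arity decomposition of $d_E^2=0$. These signs are transparent in the symmetric $L_\infty[1]$-convention used by Voronov, but slightly more delicate in the skew-symmetric convention adopted here. However, the passage between the two conventions is well-known via the d\'ecalage isomorphism, so this sign bookkeeping is the only step where the present statement departs from the cited reference, and it is routine once the conventions are fixed.
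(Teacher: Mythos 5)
The paper does not actually prove this statement: it is quoted as a known result of Voronov and used as a black box, so there is no in-paper argument to compare yours against. Your dualization argument is the standard proof and its outline is sound: define $d_E$ from $(l_k,\rho)$ by the arity components \eqref{eqdiff1}--\eqref{eqdiff2} and \eqref{ancre}, extend as a derivation, and match the arity decomposition of $d_E^2=0$ on the generators $\mathcal{C}^\infty(M)$ and $\Gamma(E^*)$ with the higher Jacobi identities \eqref{superjacobi} and the anchor conditions; conversely, read the anchor and the brackets off the restriction of each $d_E^{(s)}$ to the generators.

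Two remarks. First, the one point where you genuinely add something is your opening hypothesis that $d_E$ be a \emph{derivation} of the wedge product of height $+1$. This is not in the statement as printed, and without it the claimed bijection is false (a generic $\mathbb{R}$-linear operator squaring to zero carries no bracket data); you are right to impose it, and it is worth saying explicitly that the theorem is only correct under that reading. The derivation property is also what makes the reverse direction work: it forces $d_E^{(s)}\big|_{\mathcal{C}^\infty(M)}=0$ for $s\neq 1$ by the height constraint and the non-positive grading of $E$, which is exactly where the $\mathcal{C}^\infty$-linearity of all $l_k$ except the anomalous case \eqref{robinson} comes from. Second, your arity bookkeeping is off by one: the arity-$s$ component of $d_E^2$ evaluated on $\Gamma(E^*)$ encodes the higher Jacobi identity \eqref{superjacobi} for $s+1$ arguments (arity $0$ gives $l_1^2=0$, arity $1$ gives the Leibniz compatibility \eqref{leibnizzzz}, arity $2$ gives the Jacobiator identity involving $l_3$), while on functions the identity \eqref{eq:direct} sits in arity $1$ and \eqref{jpp} in arity $2$. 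This is cosmetic and does not affect the structure of the argument, but it should be fixed before the sign verification you defer to the d\'ecalage isomorphism can be carried out.
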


This correspondence provides a very efficient way of defining morphisms of Lie $\infty$-algebroids~\cite{bonavolontaCategoryLieNalgebroids2013}:

\begin{definition}
Let $E$ (resp. $E'$) be a Lie $\infty$-algebroid with base manifold $M$ (resp. $M'$). Then a  \emph{Lie $\infty$-morphism between $E$ and $E'$} is a height-preserving graded algebra morphism $\Phi\colon\Omega_\blacktriangle(E')\to\Omega_\blacktriangle(E)$ that intertwines $d_E$ and $d_{E'}$:
\begin{equation}\label{ultra}
d_E\circ\Phi=\Phi\circ d_{E'}
\end{equation}
\end{definition}

 The Lie $\infty$-morphism $\Phi$ can be decomposed into components of various arities: $\Phi=\sum_{s\geq0} \Phi^{(s)}$  where $\Phi^{(s)}$ is a map from $\Omega^\bullet(E')$ to $\Omega^{\bullet+s}(E)$. Each of these components, for $s\geq1$, defines a dual map $f_s\colon\Gamma(\wedge^sE)\to \Gamma(E')$ of degree $1-s$ satisfying some intricate consistency conditions generalizing the Lie algebra homomorphism condition \cite{kajiuraHomotopyAlgebrasInspired2006}.
Also the restriction of $\Phi$ to $\mathcal{C}^\infty(M')$ induces a map of smooth manifolds $\varphi\colon M\to M'$ so that for every $f\in\mathcal{C}^\infty(M')$ and $\eta\in\Omega(E')$, we have:
\begin{equation}
\Phi(f\eta)=\varphi^*(f)\Phi(\eta)
\end{equation}
When $M'=M$ and $\varphi=\mathrm{id}_M$, we say that $\Phi$ is \emph{over $M$}.

Morphisms of Lie $\infty$-algebroids admit homotopies, that admit in turn homotopies of homotopies and so on. 
We refer to Section 3 of \cite{laurent-gengouxUniversalLieInfinityAlgebroid2020} for a detailed discussion on this notion. We will only say that this notion of homotopy between Lie $\infty$-morphisms coincides, when the Lie $\infty$-algebroids are of finite length, with the usual cylinder homotopies (see Proposition 3.61 in~\cite{laurent-gengouxUniversalLieInfinityAlgebroid2020}).
Homotopy is an  equivalence relation -- denoted $\sim$ -- between Lie $\infty$-morphisms,  which allows us to define a notion of equivalence between Lie $\infty$-algebroids: 
\begin{definition}
	\label{def:homtequiv}
Let $E$ and $E'$ be two Lie $\infty$-algebroids over $M$ and $\Phi\colon \Omega_\blacktriangle(E')\to\Omega_\blacktriangle(E)$ a Lie $\infty$-algebroid morphism between them.
We say that $\Phi$ is a \emph{homotopy equivalence} if there exists a Lie $\infty$-morphism $\Psi\colon \Omega_\blacktriangle(E)\to\Omega_\blacktriangle(E')$
such that
\begin{equation*}
\Phi\circ\Psi\sim\mathrm{id}_{\Omega(E)}\hspace{1cm}\text{and}\hspace{1cm}\Psi\circ\Phi\sim\mathrm{id}_{\Omega(E')}.
\end{equation*}
In such a case,  the Lie $\infty$-algebroids $E$ and $E'$ are said to be \emph{homotopy equivalent}.
\end{definition}

\begin{remarque} In particular, any homotopy equivalence between $E$ and $E'$ is a quasi-isomorphism, i.e. it is an isomorphism at the cohomology level. Since moreover any two homotopy equivalence are homotopic (see Theorem 2.8 in  \cite{laurent-gengouxUniversalLieInfinityAlgebroid2020}) we deduce that this isomorphism at the cohomology level is canonical.
\end{remarque}

\subsection{Representations up to homotopy of Lie $\infty$-algebroids}\label{appendix}

 Representations up to homotopy of a  Lie algebroid $A$ \cite{abadRepresentationsHomotopyLie2011} are characterized by the fact that the vector bundle $K$ serving as the representation of $A$ is promoted to a graded vector bundle, and the  $A$-connection $\nabla$ on $K$ becomes a family of `connections $k$-forms'. Then it is required that the curvatures associated to these `connections $k$-forms' vanish only \emph{up to homotopy}.  Such notion of representation up to homotopy, although intricate, emerge in a very natural way in the definition of the \emph{adjoint representation} of a Lie algebroid. Representations up to homotopy of Lie algebroids can be generalized to Lie $\infty$-algebroids  \cite{caseiroModularClassLie2022, jotzleanModulesRepresentationsHomotopy2023}, although it is a bit more involved due to the fact that a Lie $\infty$-algebroid is itself graded. For self-consistency of the paper, the present section is devoted to present this notion based on the graded skew-symmetric convention for Lie $\infty$-algebroids, building on the original notations of~\cite{abadRepresentationsHomotopyLie2011}. See also \cite{mehtaLieAlgebroidModules2014} for a discussion about the various notions of representations of Lie algebroids.

Let $E=\bigoplus_{i\geq0}E_{-i}$ be a Lie $\infty$-algebroid and let $K=\bigoplus_{j\in\mathbb{Z}}K_{j}$ be a graded vector bundle, both defined over a smooth manifold $M$.
A $k$-form on $E$ taking values in $K$ of \emph{total degree} $d$ is a finite sum of $k$-forms $\eta=\sum_{l=1}^m\eta_{l}$ of respective height $h_l\geq0$ (as defined in Section \ref{secccc}), each of the $\eta_{l}$ taking values in some $K_{j_l}$, where $j_l$ is such that $d=h_l+j_l$. 
   For example, a $3$-form $\eta\in (E_{-a})^*\wedge (E_{-b})^*\wedge (E_{-c})^*\otimes K_{j}$ has a total degree of  $3+a+b+c+j$, where the integer 3 is the form degree and the height is $3+a+b+c$. We emphasize the requirement that the sum should be finite. 
  
  One sets $\Omega^k_{h}(E,K_{j})$ to be the space of $k$-forms of height $h$ taking values in $K_j$.   For every integer $d$, one then sets $\Omega^k(E,K)_d$ to be the space of $K$-valued $k$-forms of total degree $d$:
 \begin{equation}
\Omega^k(E,K)_d= \bigoplus_{h\geq k}\Omega^k_{h}(E, K_{d-h}) 
\end{equation}
 This notation should not be confused with the notation corresponding to singling out an element of degree $d$ of a $k$-form:
 \begin{equation}
 \Omega^k(E,K)_d\neq \Omega^k(E,K)\big|_d
 \end{equation}
 In the former case, one takes into account the form degree $k$ to compute $d$, while in the latter case, one only sums the contributions from $E^*$ and that from $K$, but does not take into account the form degree $k$.
 As the black triangle in $\Omega_{\blacktriangle}(E)$ denotes the height of a form, 
in the rest of the text, the black lozenge will denote the total degree:
\begin{equation*}\Omega^\bullet(E,K)_\blacklozenge=\bigoplus_{k\geq0}\bigoplus_{d\in\mathbb{Z}}\Omega^k(E,K)_d
\end{equation*}
Sometimes the latter space $\Omega^\bullet(E,K)_\blacklozenge$ is denoted $\Omega(E,K)_\blacklozenge$ when emphasis is made on the total degree. 

A Lie algebroid being a Lie $\infty$-algebroid concentrated in degree 0, the only degrees that enter the total degree are the form (polynomial) degree and the grading of $K$, which is actually  what appears for representations up to homotopy of Lie algebroids \cite{abadRepresentationsHomotopyLie2011}. We now have enough material to generalize the latter notion to Lie $\infty$-algebroids~\cite{caseiroModularClassLie2022, jotzleanModulesRepresentationsHomotopy2023}:

\begin{definition}  \label{raquelLG}
A \emph{representation up to homotopy} of a Lie $\infty$-algebroid $E$ is a graded vector bundle $K$ over $M$ together with a linear map $D\colon\Omega(E,K)_{\blacklozenge}\to\Omega(E,K)_{\blacklozenge+1}$ of total degree $+1$ satisfying:
\begin{equation}\label{definitionhomotop}
D(\eta\wedge u)={d}_E(\eta)\wedge u+(-1)^{m}\eta\wedge D(u)
\end{equation}
for every $\eta\in\Omega_m(E)$ and $u\in\Omega(E,K)$, and such that $D^2=0$.
\end{definition}

The differential $D$ is more intricate than in the Lie algebroid case.
Here, the operator $D$ splits into a sum $D=\sum_{k\geq0}D^{(k)}$ where each component is a $\mathbb{R}$-linear map $
D^{(k)}\colon\Omega^\bullet(E,K)_{\blacklozenge}\to \Omega^{\bullet+k}(E,K)_{\blacklozenge+1}$ 
 of arity $k$ and of total degree $+1$ . 
 Each of these operators $D^{(k)}$ can be further decomposed with respect to the height: $D^{(k)}=\sum_{h\geq k}D^{(k)}_h$, where each component is a $\mathbb{R}$-linear map: \begin{equation}D^{(k)}_h\colon\Omega^\bullet_\blacktriangle(E,K)_{\blacklozenge}\to \Omega^{\bullet+k}_{\blacktriangle+h}(E,K)_{\blacklozenge+1}\end{equation} 
 Since the vector bundle $K$ is graded, the vector bundle of fiberwise endomorphisms of $K$ is graded as well: $\mathrm{End}(K)=\bigoplus_{i\in\mathbb{Z}}\mathrm{End}(K)_i $.
 For every $k\neq1$, the restriction of the action of $D^{(k)}_h$ to $\Gamma(K)$ canonically induces an $\mathrm{End}(K)_{1-h}$-valued $k$-form $\omega^{(k)}_h$ on $E$:
  \begin{equation}
  D^{(k)}_h\colon\Gamma(K_i)\longrightarrow \Omega^k_h(E,K_{i+1-h})\qquad\Longleftrightarrow \qquad \omega^{(k)}_h\in \Omega^k_h\big(E,\mathrm{End}(K)_{1-h}\big)
  \end{equation}
where the space on the right hand side should be understood as:
\begin{equation*}
\Omega^k_h\big(E,\mathrm{End}(K)_{1-h}\big)=\sum_{i_1+\ldots+i_k=h-k}\Gamma\Big((E^*)_{i_1}\wedge\ldots\wedge (E^*)_{i_k}\Big)\otimes\mathrm{End}(K)_{1-h}
\end{equation*}

 The 0-connection $\omega^{(0)}\in\Gamma\big(\mathrm{End}_{1}(K)\big)$, being a zero-form, has height $0$. Being of total degree~$+1$ implies that it is a degree $+1$ vector bundle morphism on~$K$ that we denote~$\partial:K_{\bullet}\to K_{\bullet+1}$.
For $k\geq2$, the correspondence between  $D^{(k)}$ and $\omega^{(k)}$ satisfies the following rule:
 \begin{equation}
D^{(k)}(u)=\omega^{(k)}\wedge u\quad \text{for $k\neq1$}\label{eqokkk}
\end{equation}
for every $u\in\Omega(E,K)$. Here, the wedge product means that we apply a concatenation on the $E$-form part of $u$, and that $\omega^{(k)}$ acts as an endomorphism on the part of $u$ sitting in $K$.  We call \emph{connection $k$-form} the $\mathrm{End}(K)$-valued $k$-form $\omega^{(k)}$. For $k=1$, the operator $D^{(1)}$ is the exterior covariant derivative associated to   what we call an \emph{$E$-connection}, that is to say, a family of differential operators  $\big(\nabla^{i}:\Gamma(E_{-i})\times\Gamma(K_\bullet)\to \Gamma(K_{\bullet-i})\big)_{i\geq0}$ satisfying the usual axioms of a connection:
\begin{align}
\nabla_{fe}^i(s)&=f\nabla_e^i(s)\label{eqrep1}\\
\nabla_e^i(fs)&=f\nabla_e^i(s)+\rho(e)[f]\,s \label{eqrep2}
\end{align}
for every $f\in\mathcal{C}^\infty, e\in\Gamma(E_{-i})$ and $s\in\Gamma(K)$, and where for consistency we assume that $\rho|_{E_{-i}}=0$ if $i\geq1$.   The exterior covariant derivative $D^{(1)}$ is also denoted $d^\nabla$.

\begin{remarque} When the grading of the graded vector bundle $K$ is bounded (which will be the case in the present paper), 
 then there exists an open covering of $M$ by open sets $U_\alpha$ trivializing~$K$.
Over such a covering,  to every  differential operator~$\nabla^i$ corresponds a family of locally defined 1-forms $\omega^{(1),i}_\alpha\in\Omega^1\big(E_{-i}\big|_{U_\alpha},\mathrm{End}_{-i}(K)\big|_{U_\alpha}\big)$. 
On the overlap $U_\alpha\cap U_\beta$, the family of local 1-forms~$\omega^{(1),i}_\alpha$ satisfies the following conditions, depending on the value of $i$:
\begin{equation}\label{eq:transitionmap}
\omega^{(1),i}_\beta=g_{\alpha\beta}^{-1}\omega^{(1),i}_\alpha g_{\alpha\beta}\ \text{ if $i\geq1$ and }\ 
\omega^{(1),0}_\beta=g_{\alpha\beta}^{-1}d_E^{(1)}g_{\alpha\beta}+g_{\alpha\beta}^{-1}\omega^{(1),0}_\alpha g_{\alpha\beta}\ \text{ otherwise}
\end{equation}
where $g_{\alpha\beta}$ represents the transition map of $K$ over $U_\alpha\cap U_\beta$. 
A family  of locally defined 1-forms $(\omega^{(1),i}_\alpha)_{i,\alpha}$ satisfying Conditions \eqref{eq:transitionmap} is abusively called a \emph{connection 1-form} and denoted $\omega^{(1)}$ in the rest of the text. 
\end{remarque}

When decomposing the equation $D^2=0$ by form degree, one obtains for all $k\geq0$:
\begin{equation}
\sum_{l=0}^k\, D^{(l)}D^{(k-l)}=0\label{eq:tropezi1}
\end{equation}
For $k=0$, this equation means that $\partial:K_{\bullet}\to K_{\bullet+1}$ is a differential, turning $K$ into a chain complex.
Let us extend this differential by setting $\slashed{\partial}\colon\Omega^\bullet\big(E,\mathrm{End}(K)\big)_\blacklozenge\to \Omega^\bullet\big(E,\mathrm{End}(K)\big)_{\blacklozenge+1}$ to be the $\mathcal{C}^\infty$-linear map of total degree $+1$ defined by:
\begin{align}\label{slashedpartial}
\slashed{\partial}(\alpha)(a_1,\ldots,a_k;u)&=\partial\big(\alpha(a_1,\ldots,a_k;u)\big)-(-1)^{|\alpha|+|a_1|+\ldots+|a_k|}\alpha\big(a_1,\ldots,a_k;\partial(u)\big)\\
&\hspace{2cm}-\sum_{j=1}^k(-1)^{|a_1|+\ldots+|a_{j-1}|}\alpha\big(a_1,\ldots,l_1(a_j),\ldots,a_k;u\big)\nonumber
\end{align}
for every $k\geq0$, $\alpha\in\Omega^k\big(E,\mathrm{End}(K)\big)$, $a_1,\ldots,a_k\in\Gamma(E)$ and $u\in\Gamma(K)$. The symbol $|\alpha|$ denotes the total degree of $\alpha$ while the symbols $|a_j|$ denote the degree of the homogeneous element $a_j\in\Gamma(E)$. More precisely, if $\alpha$ is a $\mathrm{End}(K)$-valued $k$-form of height $h$ and of total degree $d$, i.e. if $\alpha\in\Omega^k_h\big(E,\mathrm{End}(K)_{d-h}\big)$, then $\slashed{\partial}(\alpha)\in\Omega^k_{h}\big(E,\mathrm{End}(K)_{d-h+1}\big)\oplus\Omega^k_{h+1}\big(E,\mathrm{End}(K)_{d-h}\big)$. A straightforward calculation shows that $\slashed{\partial}^2=0$, turning $\Omega^k(E,\mathrm{End}(K))_\blacklozenge$ into a chain complex for each $k\geq0$.

Next, Equation \eqref{eq:tropezi1} for $k=1$ means that the $E$-connection
$\nabla$ on $K$ is such that, for every $a\in\Gamma(E)$:
\begin{equation}\label{slashed}
\partial\circ\nabla_a-(-1)^{|a|}\nabla_a\circ\partial-\nabla_{l_1(a)}=0
\end{equation}
This can be equivalently written as $d_E^{(1)}\omega^{(0)}+\slashed{\partial}\big(\omega^{(1)}\big)=0$. For $k=2$, Equation \eqref{eq:tropezi1} reads:
\begin{equation}
R_{\nabla}+\slashed{\partial}\big(\omega^{(2)}\big)=0
\end{equation}
More generally, to every connection $k-1$-form $\omega^{(k-1)}$ is associated a \emph{curvature $k$-form} $R^{(k)}$  defined by:
\begin{align}
R^{(1)}&=d_E^{(1)}\omega^{(0)}\\
\forall\ k\geq2\qquad R^{(k)}&=\underset{s+t=k}{\sum_{1\leq s, t\leq k-1}}{d}_E^{(s)}\omega^{(t)}+\omega^{(s)}\circ\omega^{(t)} \label{definitioncurvature2} 
\end{align}
Each curvature has total degree 2, while the index $k$ indicates the form degree of $R^{(k)}$. Then, the system of equations \eqref{eq:tropezi1} can be written in a way synthetizing that of Proposition 3.2 in~\cite{abadRepresentationsHomotopyLie2011}: 

\begin{proposition}There is a one-to-one correspondence between representations up to homotopy $(K,D)$ of $E$, and graded vector bundles $K$ endowed with a fiberwise linear differential $\partial:K_\bullet\to K_{\bullet+1}$, an $E$-connection $\nabla$ on $K$, and a family of $\mathrm{End}(K)$-valued $k$-forms on $E$ $\big(\omega^{(k)}\big)_{k\geq2}$ of total degree $+1$ satisfying the following set of equations:
\begin{equation}\label{eqimpot}
R^{(k)}+\slashed{\partial}\big(\omega^{(k)}\big)=0\hspace{1cm}\text{for every $k\geq1$}
\end{equation}
\end{proposition}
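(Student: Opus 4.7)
The plan is to decompose the operator $D$ according to arity, extract the data $(\partial,(\omega^{(k)})_{k\geq 1})$ from its components, and then match the equation $D^2=0$ term by term with the system \eqref{eqimpot}.

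First, I would write $D=\sum_{k\geq 0}D^{(k)}$ according to the form-degree shift. The Leibniz rule \eqref{definitionhomotop} forces $D$ to be entirely determined by its action on $\Gamma(K)$: applied with $\eta\in\mathcal{C}^\infty(M)$, it shows that $D^{(0)}|_{\Gamma(K)}$ is $\mathcal{C}^\infty$-linear and hence comes from a degree $+1$ vector bundle morphism $\partial:K_\bullet\to K_{\bullet+1}$; the same argument for $k\geq 2$ shows that $D^{(k)}|_{\Gamma(K)}$ is $\mathcal{C}^\infty$-linear and is therefore encoded by an $\mathrm{End}(K)$-valued $k$-form $\omega^{(k)}$. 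For $k=1$, the Leibniz rule instead produces the $\rho$-defect, recovering the axioms \eqref{eqrep1}--\eqref{eqrep2} of an $E$-connection $\nabla$, which I then identify with $\omega^{(1)}$ via the rule $D^{(1)}=d^{\nabla}$. Conversely, given $(\partial,(\omega^{(k)})_{k\geq 1})$, the formulas \eqref{eqokkk} combined with the prescription \eqref{definitionhomotop} uniquely extend the data to operators $D^{(k)}$, and hence to $D$.

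Next, I would decompose $D^2=0$ by form degree, obtaining the system \eqref{eq:tropezi1}. For $k=0$ this reduces to $\partial^2=0$. For $k\geq 1$, I would split the sum into the two extremal terms $D^{(0)}D^{(k)}+D^{(k)}D^{(0)}$ and the middle block $\sum_{1\leq s,t,\, s+t=k}D^{(s)}D^{(t)}$. Applying both to a section of $K$ and unpacking via \eqref{slashedpartial} and \eqref{definitioncurvature2}, the extremal piece reproduces $\slashed{\partial}(\omega^{(k)})$ --- with the $\partial$-composition pieces coming from the fiberwise differential on $K$, and the $l_1$-terms in $\slashed{\partial}$ arising from the $d_E^{(0)}$-component hidden inside $D^{(0)}$ acting on the form part of $\omega^{(k)}$ --- while the middle block reorganizes into $R^{(k)}$. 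The degenerate case $k=1$ (empty middle block) then recovers precisely \eqref{slashed}, and $k=2$ recovers the Bianchi-like identity $R_\nabla+\slashed{\partial}(\omega^{(2)})=0$.

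The hard part will be the bookkeeping of signs and of the internal height grading. The operator $D^{(0)}$ mixes contributions from $\partial$ on $K$ and from $d_E^{(0)}$ on the form side, so the graded-commutator-like terms that emerge from $D^{(0)}D^{(k)}+D^{(k)}D^{(0)}$ must be reorganized into the single operator $\slashed{\partial}$ with exactly the sign convention of \eqref{slashedpartial}; in particular, the factor $(-1)^{|\alpha|+|a_1|+\ldots+|a_k|}$ has to be tracked by pushing $\partial$ through an endomorphism-valued $k$-form of mixed height. A further subtlety is that $\omega^{(k)}$ need not be concentrated in a single height, so each equation must be decomposed further by height $h$ before comparing with the corresponding bi-graded pieces of \eqref{eq:tropezi1}. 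Once this bookkeeping is in place, the equivalence is a lengthy but routine calculation in the spirit of Proposition~3.2 of~\cite{abadRepresentationsHomotopyLie2011}, adapted to the internal grading of $E$ and to the presence of the higher brackets $l_k$ for $k\geq 3$.
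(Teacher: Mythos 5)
Your proposal is correct and follows essentially the same route as the paper, which does not give a separate formal proof but derives the correspondence in the text preceding the proposition: decomposing $D$ by arity, extracting $\partial$, the $E$-connection and the $\omega^{(k)}$ from the $\mathcal{C}^\infty$-linearity forced by the Leibniz rule \eqref{definitionhomotop}, and then matching the form-degree decomposition \eqref{eq:tropezi1} of $D^2=0$ with the system \eqref{eqimpot}, the extremal terms giving $\slashed{\partial}(\omega^{(k)})$ and the middle block giving $R^{(k)}$. Your additional remarks on the converse direction and on the height/sign bookkeeping are consistent with the paper's conventions and fill in details the paper leaves implicit.
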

\begin{remarque}
As for Lie algebroids \cite{abadRepresentationsHomotopyLie2011}, representations up to homotopy of Lie $\infty$-algebroids are characterized by the fact that, for every $k\geq1$, the curvature $R^{(k)}$ corresponding to the connection $k-1$-form is flat \emph{up to homotopy}, and the homotopy is the connection $k$-form $\omega^{(k)}$. This implies that $R^{(k)}$ is a 2-coboundary in the chain complex $\big(\Omega^k\big(E,\mathrm{End}(K)\big)_\blacklozenge,\slashed{\partial}\big)$.
\end{remarque}

\begin{example}\label{exadjoint}
As an example let us now present what is the adjoint representation of a Lie $\infty$-algebroid $(E,l_k,\rho)$, under our notations. It is defined on the graded vector bundle $K=E\oplus sTM$,  that is to say: $K_{-i}=E_{-i}$ for every $i\geq0$, and $K_{1}=sTM=TM[-1]$ where $s$ is the \emph{suspension operator} increasing the degree of $TM$ by $+1$.
We can then define a differential $\partial:K_\bullet\to K_{\bullet+1}$ by the following requirements:
\begin{equation*}
\partial\big|_{K_{-i}}=
\begin{cases}l_1\big|_{E_{-i}}\hspace{1.1cm}\text{for $i\geq1$}\\
s\circ \rho \hspace{1.32cm} \text{for $i=0$}
\end{cases}
\end{equation*}
 This differential turns $K=E\oplus sTM$ into a chain complex.

The possibility of defining the adjoint representation of $E$ relies on a choice of a $TM$-connection $\nabla$ on $E$.
Using $\nabla$, let us define an $E$-connection $\overline{\nabla}$ on $K$:
\begin{align}
\overline{\nabla}_{a}({b})&= l_2({a},{b})+\nabla_{\rho({b})}({a})\label{basicc1}\\
\overline{\nabla}_{a}(sX)&= s[\rho({a}),X]+[\partial,\nabla_{X}](a)\label{basicc2}
\end{align}
for every ${a},{b}\in\Gamma(E)$ and $X\in \mathfrak{X}(M)$, and where $\rho|_{E_{-i}}=0$ if $i\geq1$.  In particular, if $|b|\leq-1$ then the right-hand side of Equation  \eqref{basicc1} reduces to the usual bracket $l_2(a,b)$ while if $|a|\leq-1$ the right-hand side of Equation  \eqref{basicc2} reduces to $-\nabla_X\big(l_1(a)\big)$. Following  \cite{abadRepresentationsHomotopyLie2011}, we call this connection $\overline{\nabla}$ \emph{the basic connection associated to $\nabla$}.   We had to emphasize the suspension of $X$ in Equation~\eqref{basicc2}  in order to make explicit the fact that the basic connection is a degree 0 operator.

As in the Lie algebroid case, we can define a basic curvature associated to the $TM$-connection $\nabla$. The basic 2-curvature $\overline{R}_{[2]}$ measures the `compatibility' between this connection and the 2-bracket $l_2$, corrected by some terms that makes it $\mathcal{C}^\infty$-linear \cite{abadRepresentationsHomotopyLie2011}: 
\begin{align}\label{basiccurvatureinfty1}
\overline{R}_{[2]}({a},{b})(sX)&=\nabla_{X}\big(l_2({a},{b})\big)-l_2\big(\nabla_{X}({a}),{b}\big)-l_2\big({a},\nabla_{X}({b})\big)\\
&\hspace{3.5cm}-\nabla_{s^{-1}\overline{\nabla}_{b}(sX)}({a})+\nabla_{s^{-1}\overline{\nabla}_{a}(sX)}({b})\nonumber
\end{align}
for every $a,b\in\Gamma(E)$ and for every  $X\in \mathfrak{X}(M)$.
Since the right-hand side has degree $|a|+|b|$, while on the left-hand the degrees of the arguments sum up to $|a|+|b|+1$ (because $|sX|=1$), the basic 2-curvature $\overline{R}_{[2]}$ is a 2-form taking values in $\mathrm{Hom}_{-1}(sTM,E)$, i.e. it has total degree~$+1$: $\overline{R}_{[2]} \in\Omega^2(E,\mathrm{Hom}(sTM,E))_{+1}$.
Now recall that a Lie $\infty$-algebroid comes equipped with a set of graded skew-symmetric brackets $(l_k)_{k\geq3}$. Hence, we can define a whole set of  higher basic curvatures $\overline{R}_{[k]}$ measuring the compatibility of $\nabla$ with the $k$-brackets. More precisely, for every $k\geq3$, and for every $a_1,\ldots,a_k\in\Gamma(E)$ and $X\in \mathfrak{X}(M)$, we define the $k$-th basic curvature $\overline{R}_{[k]}$~as:
\begin{equation}
\overline{R}_{[k]}(a_1,\ldots,a_k)(sX)=\nabla_{X}\big(l_k(a_1,\ldots,a_k)\big)-\sum_{l=1}^kl_k\big(a_1,\ldots,\nabla_{X}(a_l),\ldots,a_k\big)
\end{equation}
Notice that here, contrary to the basic 2-curvature $\overline{R}_{[2]}$, we don't need any correcting term because the higher brackets are naturally $\mathcal{C}^\infty$-linear. Comparing the degree of the arguments on left-hand side and on the right-hand side, we deduce that $\overline{R}_{[k]}$ if a $k$-form on $E$  taking values in $\mathrm{Hom}_{1-k}(sTM,E)$, i.e. it has total degree $+1$ as $\overline{R}_{[2]}$: $\overline{R}_{[k]} \in\Omega^k(E,\mathrm{Hom}(sTM,E))_{+1}$.

The adjoint representation of $E$ (with respect to the connection $\nabla$) will be defined on the graded vector bundle $K=E\oplus sTM$ as follows: first, we set $D^{(0)}=\partial$, $D^{(1)}=d^{\overline{\nabla}}$ and 
 for every $k\ge2$ we define the connection $k$-form $\omega^{(k)}$ as:
\begin{equation}\label{definitionconnection}
\omega^{(k)}(a_1,\ldots,a_k)= l_{k+1}(a_1,\ldots,a_k,\,.\,)+\overline{R}_{[k]}(a_1,\ldots,a_k)(\,.\,)
\end{equation}
By definition, $\omega^{(k)}(a_1,\ldots,a_k)$ is a graded endomorphism of $K=E\oplus sTM$.
The first term $l_{k+1}(a_1,\ldots,a_k,\,.\,)$ (resp. $\overline{R}_{[k]}(a_1,\ldots,a_k)$) defines the action of $\omega^{(k)}(a_1,\ldots,a_k)$ on $E$ (resp. $sTM$).
One verifies that this family $\big(\omega^{(k)}\big)_{k\geq2}$ of $\mathrm{End}(K)$-valued $k$-forms of total degree $+1$ induces the desired family of operators $D^{(k)}$ via the correspondence \eqref{eqokkk}. This adjoint representation straightforwardly generalizes that of Lie algebroids \cite{abadRepresentationsHomotopyLie2011}, and coincides with the one defined with respect to the graded symmetric convention for Lie $\infty$-algebroids appearing in Proposition~3.27 of \cite{caseiroModularClassLie2022}. The adjoint representation of $E$  presented above depends on the choice of $TM$-connection $\nabla$ so it is not unique. However, as in Section 3.2 of \cite{abadRepresentationsHomotopyLie2011}, representations of the Lie $\infty$-algebroid $E$ forms a category and the isomorphism classes of adjoint representations of $E$ (with respect to homotopy equivalences) is the `true' adjoint representation of $E$ (see also Section~5 of \cite{jotzleanModulesRepresentationsHomotopy2023}).\end{example}

\subsection{Singular foliations and their foliated cohomology}\label{singlg}


Although most of the material of this section has been introduced in \cite{laurent-gengouxUniversalLieInfinityAlgebroid2020}, we provide it again for consistency, and develop it so that it fits our current objective: to define the modular class of a singular foliation.
Let $M$ be a smooth manifold, $\mathcal{C}^\infty\colon U\to\mathcal{C}^\infty(U)$ be the sheaf of smooth functions on $M$, and $\mathfrak{X}\colon U\to\mathfrak{X}(U)$ be the sheaf of smooth vector fields on $M$.

\begin{definition}
\label{def:sing_fol}
A \emph{singular foliation} is a $\mathcal{C}^\infty$-subsheaf ${\mathcal F}\colon {U} \mapsto  {\mathcal F}(U)$ of the sheaf of vector fields ${\mathfrak X}$, which is:
\begin{enumerate}
\item closed under the Lie bracket of vector fields,
\item locally finitely generated as a $\mathcal{C}^\infty$-module.
\end{enumerate}
\end{definition}

A singular foliation induces a \emph{generalized distribution}, i.e. the (smooth) assignment $D\colon x\mapsto D_x\subset T_xM$, for every $x\in M$, of a subspace  of the tangent space of $M$ at $x$. The rank of this distribution may not be constant over $M$. 
We define a \emph{leaf} as an immersed -- in fact, weakly embedded -- submanifold $L$ of $M$ such that $T_xL=D_x$ for every $x\in L$. By Hermann's theorem, finitely generated involutive generalized distributions are integrable \cite{hermannDifferentialGeometryFoliations1962}, i.e. leaves of a singular foliation in the sense of Definition \ref{def:sing_fol} form a partition of $M$ having the expected diffeological properties generalizing that of regular foliations \cite{miyamotoSingularFoliationsDiffeology2023}. This represents one of the generalizations of Frobenius's integrability theorem to singular foliations \cite{lavauShortGuideIntegration2018}.  

A \emph{regular leaf} is a leaf $L$ for which every leaf in the vicinity has the same dimension as $L$, i.e.  it is such that for every $x\in L$, there exists a small neighborhood of $x$ on which the map $x\longrightarrow \mathrm{dim}(D_x)$ is locally constant. The dimension of the regular leaves may not be constant over $M$. 
The union of all the regular leaves forms however an open dense subset in $M$, and the rank of the distribution $D$ is constant on each connected component of this subset. On the contrary, a \emph{singular leaf} is a leaf which is not a regular leaf. In particular, for any point $y$ in a singular leaf $L$, any neighborhood of $y$ admits an non-empty intersection with a regular leaf -- which has dimension stricly higher than $\mathrm{dim}(L)$, because the function $x\to\mathrm{dim}(D_x)$ taking values in the integers is lower semi-continuous.
Most examples of singular foliations in the sense of Definition \ref{def:sing_fol} come from 
 \emph{almost Lie algebroids}:

\begin{definition}
An \emph{almost Lie algebroid} is a vector bundle $A$, together with a bilinear skew-symmetric bracket $l_2$ defined on the sections of $A$ and an anchor map $\rho\colon A\longrightarrow TM$ satisfying Equations \eqref{robinson} and \eqref{jpp}. The image by the anchor map of  the sections of $A$  is a singular foliation\footnote{It is finitely generated as $A$ is a vector bundle, and it is involutive by Equation \eqref{jpp}, see \cite{laurent-gengouxUniversalLieInfinityAlgebroid2020} for details.} $\mathcal{F}_A=\rho\big(\Gamma(A)\big)$ called the \emph{characteristic foliation of $A$}.
Given a singular foliation $\mathcal{F}$, we say that an almost Lie algebroid $A$ \emph{covers $\mathcal{F}$} if $\mathcal{F}_A=\mathcal{F}$. 
\end{definition}

\begin{remarque}
 Contrary to Lie algebroids, the  bracket $l_2$ needs not satisfy the Jacobi identity. A typical example of an almost Lie algebroid is the zeroth component $E_0$ of a Lie $\infty$-algebroid $E=\bigoplus_{i\geq0} E_{-i}$.
 \end{remarque}

  
Now, it may happen that a singular foliation $\mathcal{F}$ admits a \emph{geometric resolution} \cite{laurent-gengouxUniversalLieInfinityAlgebroid2020}: 
 it is a family of non-positively graded vector bundles $E=(E_{-i})_{i\geq0}$, together with a vector bundle morphism  $d=E_\bullet\longrightarrow E_{\bullet+1}$ acting as a differential, and an anchor map $\rho:E_0\to TM$, such that for every open set $U\subset M$, the following chain complex:
\begin{center}
\begin{tikzcd}[column sep=0.7cm,row sep=0.4cm]
\ldots\ar[r,"d"]&\Gamma_U(E_{-2})\ar[r,"d"]&\Gamma_U(E_{-1})\ar[r,"d"]&\Gamma_U(E_{0})\ar[r,"\rho"]&\mathcal{F}(U)\ar[r]&0
\end{tikzcd}
\end{center}
is an exact sequence of $\mathcal{C}^\infty(U)$-modules. Not every singular foliation admits a geometric foliation (see Example 3.38 in \cite{laurent-gengouxUniversalLieInfinityAlgebroid2020}). We say that a singular foliation of $\mathcal{F}$ is \emph{solvable} when it admits a geometric resolution of finite length. Such resolutions are a natural generalization of resolutions of integrable regular distributions of the form  \eqref{cpxe} to the setup of singular foliations.
This justifies the introduction of the following notion: 
\begin{definition}\label{def:universal}
Let $\mathcal{F}$ be a singular foliation on a smooth manifold $M$, and let $(E,l_k,\rho)$ be a Lie $\infty$-algebroid over $M$ covering $\mathcal{F}$. We say that $E$ is a \emph{universal Lie $\infty$-algebroid of $\mathcal{F}$} if its linear part is a geometric resolution of $\mathcal{F}$. The set of all universal Lie $\infty$-algebroids of $\mathcal{F}$ is denoted $\mathfrak{E}_{\mathcal{F}}$.
\end{definition}


The problem of existence and unicity of such universal Lie $\infty$-algebroids has been answered in \cite{laurent-gengouxUniversalLieInfinityAlgebroid2020}. For the sake of completeness, we recall here the most useful and central results:

\begin{theoreme} \textbf{\emph{Existence.}}
\label{theo:existe} Let $\mathcal{F}$ be a singular foliation admitting a geometric resolution $(E,d)$. Then there exists a Lie $\infty$-algebroid structure  on $E$ such that:
\begin{enumerate}
\item its linear part is the afore-mentioned geometric resolution, and 
\item it is a universal Lie $\infty$-algebroid of $\mathcal{F}$.
\end{enumerate}
\end{theoreme}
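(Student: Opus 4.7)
The plan is to build the $L_\infty$-algebroid structure inductively, one bracket at a time, using the acyclicity of the geometric resolution to absorb the failure of each higher Jacobi identity into the next bracket. This is the standard homological perturbation strategy adapted to the $\mathcal{C}^\infty$-module setting.

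First, I take $l_1 = d$, which is forced by the requirement that the linear part be the given resolution. I then construct $l_2$ on $\Gamma(E_0) \wedge \Gamma(E_0)$: for $a,b \in \Gamma(E_0)$, the bracket $[\rho(a),\rho(b)]$ lies in $\mathcal{F}$ by involutivity and hence, by surjectivity of $\rho\colon \Gamma(E_0)\to \mathcal{F}$, admits a lift to $\Gamma(E_0)$. After choosing a (non-canonical) $\mathbb{R}$-bilinear lift, I symmetrize by hand to obtain a skew-symmetric bracket $l_2$ on $E_0$ satisfying \eqref{jpp}. Equation \eqref{robinson} then determines the failure of $\mathcal{C}^\infty$-linearity in the second argument and extends $l_2$ to mixed arguments $\Gamma(E_0) \wedge \Gamma(E_{-i})$ once an auxiliary connection-like lift of $\rho(a)[\,\cdot\,]$ to $\Gamma(E_{-i})$ is selected; on purely negatively graded inputs the bracket is required to be $\mathcal{C}^\infty$-multilinear and is constructed by lifting $l_2(l_1(-),-)$ through $l_1$, exactness of the resolution guaranteeing such a lift exists.

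The core of the argument is the inductive step. Suppose $l_1,\dots,l_n$ have been constructed so that the higher Jacobi identity \eqref{superjacobi} holds for all $k\leq n$. Denote by $J_{n+1}$ the left-hand side of \eqref{superjacobi} for $k=n+1$, computed with the brackets built so far and with $l_{n+1}$ set to zero. A standard bookkeeping argument---using the previously established identities $J_1=\dots=J_n=0$ together with the fact that $l_1$ is a derivation of every $l_k$ (which itself follows from $J_k=0$ in the appropriate arities)---shows that $J_{n+1}$ is $\mathcal{C}^\infty$-multilinear in its arguments and is a $d$-cocycle, i.e.\ $l_1\circ J_{n+1} = 0$ after absorbing the lower terms. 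Because the resolution is exact in the relevant non-zero degree, $J_{n+1}$ is in fact a $d$-coboundary, and any choice of primitive provides an $\mathcal{C}^\infty$-multilinear operator $l_{n+1}$ of degree $1-n$ satisfying $l_1\circ l_{n+1} = -J_{n+1}$, which is precisely the identity $J_{n+1}+l_1(l_{n+1}(\cdot))+\dots = 0$ needed for the Jacobi identity at order $n+1$.

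The hard part will be the cocycle verification at each step: one must show that $J_{n+1}$ really does land in the image of $l_1$, which requires the $\mathcal{C}^\infty$-multilinearity (so that the question is pointwise and the fiberwise exactness of the resolution can be invoked) and the correct degree count (so that the target group $\Gamma(E_{-n+1})$ is non-trivially resolved). The $\mathcal{C}^\infty$-multilinearity is itself delicate because $l_2$ fails $\mathcal{C}^\infty$-linearity on $E_0$-arguments via \eqref{robinson}; one must track these anchor-derivative terms carefully and verify they cancel in $J_{n+1}$ owing to the Leibniz identity for $\rho$ with respect to $l_2$. Once the inductive construction is complete, the linear part of the resulting Lie $\infty$-algebroid is by construction the original geometric resolution, so $E \in \mathfrak{E}_{\mathcal{F}}$ in the sense of Definition~\ref{def:universal}. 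The non-uniqueness of the lifts at each stage accounts for the non-uniqueness of the universal Lie $\infty$-algebroid, which however will be classified up to homotopy equivalence in the companion unicity result.
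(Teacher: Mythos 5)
The paper does not prove this theorem: it is imported verbatim from \cite{laurent-gengouxUniversalLieInfinityAlgebroid2020}, and your inductive homological-perturbation strategy is exactly the one used there, so in outline you have reconstructed the right proof. Two steps, however, are stated in a form that would not work as written and need the standard repairs.

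First, the base case. An arbitrary $\mathbb{R}$-bilinear lift of $(a,b)\mapsto[\rho(a),\rho(b)]$ through the surjection $\rho\colon\Gamma(E_0)\to\mathcal{F}$, skew-symmetrized, will satisfy \eqref{jpp} but has no reason to satisfy the Leibniz rule \eqref{robinson}; and \eqref{robinson} cannot be imposed ``after the fact'' on a bracket that is already defined on all sections. The correct construction is local: choose a frame of $E_0$ over a chart, lift the brackets of the images of the frame elements, extend by bi-Leibniz (consistency holds precisely because $\rho$ of the Leibniz extension reproduces the Leibniz behaviour of the vector-field bracket), and glue the local almost Lie algebroid brackets with a partition of unity, using that they form an affine space over the $\mathcal{C}^\infty$-bilinear maps. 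Second, the very first genuine obstruction --- the Jacobiator $J_3(a,b,c)=l_2(l_2(a,b),c)+\circlearrowleft$ of three sections of $E_0$ --- lands in $\Gamma(E_0)$, where the condition ``$l_1\circ J=0$'' is empty and the complex is not acyclic: $H_0$ of the truncated complex is $\mathcal{F}$ itself. What one must check there is $\rho(J_3)=0$, which follows from the Jacobi identity for vector fields together with \eqref{jpp}, and then invoke exactness of the \emph{augmented} sequence at $\Gamma(E_0)$, i.e.\ $\ker\rho=\mathrm{im}\big(l_1|_{\Gamma(E_{-1})}\big)$, to produce $l_3$. Your phrase ``exact in the relevant non-zero degree'' skips precisely this case, which is the one that ties the $L_\infty$ structure to the foliation rather than to the bare complex. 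With these two points made precise (and with the bookkeeping organized as a double induction on arity and on the degree of the output, so that the construction also makes sense for resolutions of infinite length), your argument becomes the proof of the cited reference.
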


Theorem \ref{theo:existe} is a result about existence of universal Lie $\infty$-algebroids.
As a particular case of frequent interest, we have singular foliations which are locally real analytic. A singular foliation $\mathcal{F}$ is said real analytic if there exists an open cover of $M$ such that the generators of $\mathcal{F}$ only involve real analytic functions of the local coordinates associated to each chart. Then, for every $x\in M$ there exists a universal Lie $\infty$-algebroid of $\mathcal{F}$ of length at most $n+1$ defined over some neighborhood of $x$.  The following theorem is a result about the universal property of universal Lie $\infty$-algebroids:


\begin{theoreme} \textbf{\emph{Unicity.}} \label{theo:onlyOne}
Let $\mathcal{F}$ be a singular foliation on a smooth manifold $M$ and let $E$ be  a universal Lie $\infty$-algebroid of $\mathcal{F}$. 
Then for every Lie $\infty$-algebroid $E'$ covering $ \mathcal{F}$,
there exists a Lie $\infty$-algebroid morphism over $M$ from $E'$ to $E$ and any two such Lie $\infty$-algebroid morphisms are homotopic.
\end{theoreme}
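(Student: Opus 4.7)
The plan is to translate the problem into the dual Chevalley--Eilenberg language, where a Lie $\infty$-algebroid morphism $E'\to E$ over $M$ corresponds to a morphism of commutative dg-algebras $\Phi\colon\Omega_\blacktriangle(E)\to\Omega_\blacktriangle(E')$ over $\mathcal{C}^\infty(M)$. Since $\Omega_\blacktriangle(E)$ is freely generated as a graded commutative $\mathcal{C}^\infty(M)$-algebra by $\Gamma(E^{*})$, the morphism $\Phi$ is determined by its values on generators, and the intertwining condition $\Phi\circ d_E=d_{E'}\circ\Phi$ decomposes into a hierarchy of equations that can be solved iteratively. The crucial input is that, by definition of a universal Lie $\infty$-algebroid, the linear part of $E$ is a geometric resolution of $\mathcal{F}$, hence is acyclic in negative degrees and exact at $E_0$ onto $\mathcal{F}$.

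It is convenient to work with the dual description in terms of graded skew-symmetric bundle maps $\phi_{s}\colon\Gamma(\wedge^{s}E')\to\Gamma(E)$ of degree $1-s$. First I would construct the linear component $\phi_{1}\colon E'_{\bullet}\to E_{\bullet}$ as a chain map covering the identity on $\mathcal{F}$. In degree $0$, the surjection $\rho\colon\Gamma(E_0)\to\mathcal{F}$, combined with the local freeness of $E'_0$ and a partition of unity on $M$, provides a lift of $\rho'\colon\Gamma(E'_0)\to\mathcal{F}$ to a bundle map $E'_0\to E_0$. In degree $-i-1$, assuming $\phi_{1}$ has been built up to degree $-i$, the composition $\phi_{1}\circ l_1$ takes values in the kernel of $l_1$ by the previously established commuting squares, hence is $l_1$-exact by acyclicity of the resolution, and a primitive defines $\phi_{1}$ one degree further. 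With $\phi_1$ in hand, the higher Taylor coefficients $\phi_{s}$ for $s\geq 2$ are built by the same principle: the $s$-th Lie $\infty$-morphism equation takes the schematic form
\begin{equation*}
l_1\circ\phi_{s}-(-1)^{s}\phi_{s}\circ l_1\;=\;\Theta_{s}\bigl(\phi_1,\ldots,\phi_{s-1},(l_k),(l'_k)\bigr),
\end{equation*}
where the right-hand side is a polynomial expression in the lower-order data and the brackets of both algebroids. A careful (but purely formal) calculation using the higher Jacobi identities shows that $\Theta_{s}$ is $l_1$-closed whenever the earlier equations are satisfied, so acyclicity yields a $\phi_{s}$ solving the equation.

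Uniqueness up to homotopy follows by the same obstruction-theoretic scheme applied to homotopies. Given two Lie $\infty$-morphisms $\Phi,\Psi\colon E'\to E$ over $M$, the difference $\phi_{1}-\psi_{1}$ of their linear parts is a chain map inducing the zero map on $\mathcal{F}$, and is therefore null-homotopic by acyclicity of the resolution $E$; this produces the linear part of the desired homotopy. Higher homotopy components are then constructed iteratively, with obstructions vanishing for the same acyclicity reason, leaving at each step the freedom to pick a primitive. By Proposition 3.61 of \cite{laurent-gengouxUniversalLieInfinityAlgebroid2020}, this cochain-level homotopy coincides with a cylinder homotopy since $E'$ and $E$ have finite length (or, in general, one works with the intrinsic notion of homotopy recalled in Section \ref{secccc}).

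The main obstacle in this program is the bookkeeping in the higher-order step: verifying that the obstruction terms $\Theta_{s}$ are genuinely $l_1$-closed requires manipulating the whole hierarchy of higher Jacobi identities of $E$ and $E'$ simultaneously and keeping track of the signs produced by the graded skew-symmetric convention. Once this verification is carried out, the remainder is a standard homological induction that does not interact with the geometric subtleties of singular foliations: all the difficulty has been absorbed into the assertion that the linear part of a universal Lie $\infty$-algebroid is a projective resolution of $\mathcal{F}$.
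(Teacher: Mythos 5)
The paper does not actually prove this theorem: it is recalled from \cite{laurent-gengouxUniversalLieInfinityAlgebroid2020}, where the proof proceeds by exactly the obstruction-theoretic induction you describe — lifting $\rho'$ through the surjection $\rho$ using projectivity of $\Gamma(E'_0)$, extending $\phi_1$ degree by degree via the exactness of the linear part of $E$, and handling the higher Taylor coefficients and the homotopy between two morphisms by the same scheme — so your sketch follows the same route as the source. The only caveats are that the decisive verification (that each obstruction $\Theta_s$ is $l_1$-closed and, where it lands in degree $0$, lies in $\ker\rho=\mathrm{Im}(l_1)$ so that exactness of the resolution onto $\mathcal{F}$, not mere acyclicity in negative degrees, applies) is asserted rather than carried out, and that for Lie $\infty$-algebroids of infinite length the cochain-level homotopy you produce must still be upgraded to the path-of-morphisms notion of homotopy used in the statement, since the identification with cylinder homotopies is only available in finite length.
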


Hence, universal Lie $\infty$-algebroids of a singular foliation are in some sense unique: any two universal Lie $\infty$-algebroids $E$ and $E'$ of $\mathcal{F}$ are homotopically equivalent, in the sense of Definition~\ref{def:homtequiv}. This universality property found another formulation in category theory \cite{fregierHomotopyTheorySingular2019}: in the semi-model category of $L_\infty$-algebroids over the smooth manifold $M$ (these are mild generalizations of Lie $\infty$-algebroids), a universal Lie $\infty$-algebroid of the singular foliation $\mathcal{F}$  is a \emph{replacement of $\mathcal{F}$}. See also
\cite{laurent-gengouxLieRinehartAlgebrasAcyclic2022} for another discussion about equivalence of categories between Lie-Rinehart algebras over a commutative algebra and homotopy equivalence classes of non-positively graded Lie $\infty$-algebroid over their resolutions.

The second statement of Theorem \ref{theo:onlyOne} provides another important information about equivalences of universal Lie $\infty$-algebroids: that any two homotopy equivalences between universal Lie $\infty$-algebroids are homotopic.
This implies that the Lie $\infty$-algebroid cohomologies of two universal Lie $\infty$-algebroids of $\mathcal{F}$ are \emph{canonically isomorphic} as graded commutative algebras~\cite{laurent-gengouxUniversalLieInfinityAlgebroid2020}. For two such Lie $\infty$-algebroids $E$ and $E'$, we note $\widetilde{\Phi}_{E,E'}:H^\blacktriangle(E')\to H^\blacktriangle(E)$ this canonical isomorphism.  Hence, we can define an equivalence relation $\sim$ on the space $\bigcup_{E \in\mathfrak{E}_{\mathcal{F}}}H(E)$ by:
\begin{equation*}
x\sim y \hspace{1cm}\text{if and only if}\hspace{1cm}x=\widetilde{\Phi}_{E,E'}(y)
\end{equation*}
where $x\in H(E)$ and $y\in H(E')$.
This equivalence relation provides the singular foliation $\mathcal{F}$ with a cohomology:

\begin{definition} \label{defunivseral}
Let ${\mathcal F}$ be a  singular foliation admitting a geometric resolution. 
We call  \emph{universal foliated cohomology of $ {\mathcal F}$} and denote by  $H_{\mathfrak{U}}(\mathcal F)$
the set of equivalence classes of $\bigcup_{E \in\mathfrak{E}_\mathcal{F}}H(E)$ with respect to the equivalence relation $\sim$. 
\end{definition}

This definition respects the graduation defined by the height, i.e. for every $h\geq0$, the space $H_{\mathfrak{U}}^h(\mathcal F)$ is the $\mathbb{R}$-vector space freely generated by the set of equivalence classes of $\bigcup_{E \in\mathfrak{E}_\mathcal{F}}H^h(E)$. This is well defined because for any universal Lie $\infty$-algebroids $E$ and $E'$, the canonical isomorphism $\widetilde{\Phi}_{E,E'}:H^\blacktriangle(E')\to H^\blacktriangle(E)$  preserves the height. For every $h\geq0$ and every $E\in\mathfrak{E}_\mathcal{F}$, there is a canonical isomorphism between $H_{\mathfrak{U}}^h(\mathcal F)$ and $H^h(E)$ which corresponds to picking up the unique representent $x^E \in H^h(E)$ of the equivalence class $[x]\in H_{\mathfrak{U}}^h(\mathcal F)$. Hence, $H_{\mathfrak{U}}^h(\mathcal F)$ is finite dimensional as it has the same dimension as $H^h(E)$, which  does not depend on the choice of Lie $\infty$-algebroid $E\in\mathfrak{E}_{\mathcal{F}}$.
The universal foliated cohomology $H_{\mathfrak{U}}(\mathcal{F})$  inherits a canonical graded commutative algebra structure from the one on $H(E)$. 

The modular class of a  regular foliation is an element of the first group of the foliated de Rham cohomology of the corresponding involutive regular distribution. We would like to reproduce such a statement for singular foliations, but the presence of singularities compels us to make a detour via the universal foliated cohomology. 
Let us call \emph{forms on ${\mathcal F}$} and denote by $ \Omega^\bullet({\mathcal F}) $ or simply $\Omega({\mathcal F}) $ the space of ${\mathcal{C}^\infty}$-multilinear skew-symmetric assignments from
$ {\mathcal F}$~to~$\mathcal{C}^\infty$:
 \begin{equation*}\Omega^\bullet({\mathcal F}) = \mathrm{Hom}_{\mathcal{C}^\infty}\big(\wedge^\bullet{\mathcal F},{\mathcal{C}^\infty}\big)= 
 \bigoplus_{k \geq 0} \mathrm{Hom}_{\mathcal{C}^\infty}\big(\wedge^k {\mathcal F},{\mathcal{C}^\infty}\big).\end{equation*}
 Note that $0$-forms on $\mathcal{F}$ are just functions on $M$. 
One can define a foliated de Rham differential $d_{\mathrm{dR}}$ 
on $ \Omega({\mathcal F})$ from its action on any $k$-form $\alpha\in\Omega^k(\mathcal{F})$:
\begin{align}
{d}_{\mathrm{dR}} (\alpha)(X_0, \dots , X_k) &=\sum_{i=0}^k (-1)^{i} X_i\big[ \alpha (X_{0},\ldots,\widehat{X_i},\ldots,X_k)\big] \\&\hspace{1cm}+ \sum_{0\leq i < j\leq k}(-1)^{i+j}\alpha\Big([X_i,X_j],X_0,\ldots, \widehat{X_{i}},\ldots,\widehat{X_{j}},\ldots,X_k\Big) ,\nonumber
\end{align}
 with the understanding that $\widehat{X_i}$ means that the term $X_i$ is omitted.
 We call the cohomology of this operator the \emph{foliated  de Rham cohomology of $ {\mathcal F}$} and denote it by $H^\blacktriangle_{\mathrm{dR}}({\mathcal F})$. Obviously, the 0-th group of foliated de Rham cohomology corresponds to the smooth functions that are $\mathcal{F}$-invariant. When the singular foliation is regular and integrates an involutive regular distribution $F$, the foliated cohomology coincides with the foliated de Rham cohomology of $F$ or, equivalently, the  Lie algebroid cohomology of the foliation Lie algebroid $F$.

  Let $\mathcal{F}$ be a singular foliation on $M$, and let $(E,l_k,\rho)$ be a Lie $\infty$-algebroid covering $\mathcal{F}$ (i.e.~not necessarily a resolution). 
 The anchor map induces a  map $\rho^*$ from $\Omega(\mathcal{F})$ to $ \Omega(E)$ 
 given by associating to each $ \alpha \in \Omega^k(\mathcal{F})$
 the element $\rho^* {\alpha} \in  \Gamma\big(\wedge^k E_{0}^*\big) = \Omega^k_k(E) $ defined by:
  \begin{equation}\label{eq:pullbackByAnchor} 
 \rho^* \alpha (a_1, \dots,a_k) = \alpha\big(\rho(a_1), \dots,\rho(a_k)\big) 
  \end{equation}
  for every $a_1 , \dots,a_k \in \Gamma(E_{0})$.
One can check that $\alpha \mapsto \rho^* ({\alpha})$ is an injective chain map and a graded commutative algebra morphism (Lemma 4.5 in \cite{laurent-gengouxUniversalLieInfinityAlgebroid2020}), inducing therefore an algebra morphism, still denoted $\rho^*$,
  from $H_{\mathrm{dR}}(\mathcal{F})$ to 
$ H(E)$. The grading in $H_{\mathrm{dR}}(\mathcal{F})$ is the form-degree, whereas in $H(E)$ it is the height, and the map $\rho^\ast$ preserves this grading. 
The link between the foliated de Rham cohomology and the universal foliated cohomology is induced by this map:
\begin{proposition}\label{hooke}
Let $\mathcal{F}$ be a singular foliation on a smooth manifold admitting a geometric resolution. Then, there exists a canonical injective morphism of  graded commutative algebras:
\begin{equation*}
\rho^\ast_{\mathcal{F}}:H_{\mathrm{dR}}^\blacktriangle ({\mathcal F})\longrightarrow H_{\mathfrak{U}}^\blacktriangle ({\mathcal F})
\end{equation*}
that is an isomorphism in degree 1.
\end{proposition}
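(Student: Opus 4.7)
The plan is to first establish that $\rho^*_{\mathcal{F}}$ is well defined on cohomology classes, independently of the choice of universal Lie $\infty$-algebroid. For any two $E, E' \in \mathfrak{E}_{\mathcal{F}}$, Theorem \ref{theo:onlyOne} yields a Lie $\infty$-morphism $\Phi \colon \Omega(E') \to \Omega(E)$ over the identity of $M$, realizing the canonical isomorphism $\widetilde{\Phi}_{E,E'}\colon H(E') \to H(E)$. The key observation is that $\rho^*_{E'} \alpha$ lies in $\Omega^k_k(E') = \Gamma\big(\wedge^k (E'_0)^*\big)$ and therefore has $E'^*$-degree zero; any higher-arity component $\Phi^{(s)}$ with $s \geq 1$ would have to lower the $E'^*$-degree below zero, which is impossible. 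Hence only the arity-zero part contributes, and $\Phi^{(0)}$ is the pullback by the chain map $\phi_0 \colon E_0 \to E'_0$ between the geometric resolutions, which intertwines the anchors ($\rho_{E'} \circ \phi_0 = \rho_{E}$). This yields $\Phi \circ \rho^*_{E'} = \rho^*_E$ on $\Omega(\mathcal{F})$, so the classes $[\rho^*_E \alpha]$ and $[\rho^*_{E'} \alpha]$ agree in $H_{\mathfrak{U}}(\mathcal{F})$. The fact that each $\rho^*_E$ is a graded commutative algebra morphism (Lemma 4.5 in \cite{laurent-gengouxUniversalLieInfinityAlgebroid2020}) then transfers to $\rho^*_{\mathcal{F}}$.

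For the isomorphism in degree 1, one works with any universal Lie $\infty$-algebroid $E$. Since $\Omega^0_1(E) = 0$, the space of height-one forms reduces to $\Omega_1(E) = \Omega^1_1(E) = \Gamma(E_0^*)$. A closed element $\omega \in \Gamma(E_0^*)$ must satisfy $d_E^{(0)}\omega = 0$, which forces $\omega$ to annihilate $l_1\big(\Gamma(E_{-1})\big) = \ker \rho|_{E_0}$ by exactness of the linear part of $E$; thus $\omega$ descends to a unique $\alpha \in \Omega^1(\mathcal{F})$ with $\omega = \rho^*_E \alpha$. The remaining condition $d_E^{(1)}\omega = 0$ translates to $d_{\mathrm{dR}}\alpha = 0$ by naturality of the pullback. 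Exact height-one forms in $\Omega(E)$ come from $\Omega_0(E) = \mathcal{C}^\infty(M)$ via $d_E f = \rho^*_E(df)$, matching exactly the $d_{\mathrm{dR}}$-exact 1-forms on $\mathcal{F}$. Hence $\rho^*_E$, and therefore $\rho^*_{\mathcal{F}}$, is bijective in degree 1.

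Injectivity in all heights is the main obstacle. The plan is an induction on the form-degree of a primitive. Suppose $\alpha \in \Omega^k(\mathcal{F})$ is closed and $\rho^*_E \alpha = d_E \beta$ for some $\beta \in \Omega_{k-1}(E)$, and decompose $\beta = \sum_{j=1}^{k-1} \beta_j$ with $\beta_j \in \Omega^j_{k-1}(E)$. Extracting the form-degree-$j_0$ component of the equation $d_E \beta = \rho^*_E \alpha$ for the smallest $j_0$ with $\beta_{j_0} \neq 0$ yields $d_E^{(0)}\beta_{j_0} = 0$ whenever $j_0 < k$. One then invokes exactness of the Koszul-type complex $\big(\Omega^{j_0}_{j_0 + \bullet}(E), d_E^{(0)}\big)$ in positive $E^*$-degree---a property reflecting that the linear part of a universal Lie $\infty$-algebroid is a geometric resolution by locally free $\mathcal{C}^\infty$-modules, so that its $j_0$-th exterior power remains acyclic beyond the lowest slot---to write $\beta_{j_0} = d_E^{(0)} \gamma_{j_0}$ and replace $\beta$ with $\beta - d_E \gamma_{j_0}$ without disturbing $d_E\beta$. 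Iterating reduces $\beta$ to an element of $\Omega^{k-1}_{k-1}(E) = \Gamma\big(\wedge^{k-1} E_0^*\big)$ with $d_E^{(0)}\beta = 0$, which by the same descent argument must equal $\rho^*_E \gamma$ for some $\gamma \in \Omega^{k-1}(\mathcal{F})$. Then $\rho^*_E \alpha = d_E^{(1)} \rho^*_E \gamma = \rho^*_E(d_{\mathrm{dR}}\gamma)$, and the cochain-level injectivity of $\rho^*_E$ delivers $\alpha = d_{\mathrm{dR}}\gamma$. The technical heart of the proof is the Koszul-type acyclicity statement used at each inductive step, which rests on the resolution property of the universal Lie $\infty$-algebroid.
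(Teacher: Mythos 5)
Your construction of $\rho^*_{\mathcal{F}}$ (the commutative triangle supplied by Theorem \ref{theo:onlyOne}, plus the observation that only the arity-zero component of $\Phi$ can act on $\Omega^k_k(E')$ for degree reasons) and your treatment of the degree-$1$ isomorphism are correct and essentially reproduce the paper's argument. The divergence is in the injectivity step, where you attempt a genuine cohomology-level descent that the paper does not attempt: the paper deduces injectivity of $\rho^*_{\mathcal{F}}$ directly from the injectivity of $\rho^*$ at the cochain level (Lemma 4.5 of \cite{laurent-gengouxUniversalLieInfinityAlgebroid2020}), with no filtration argument.

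The problem is that the ``Koszul-type acyclicity'' your induction rests on is false in general. For $j_0=1$ the complex $\big(\Omega^1_{1+\bullet}(E),d_E^{(0)}\big)$ is $\Gamma(E_0^*)\to\Gamma(E_{-1}^*)\to\Gamma(E_{-2}^*)\to\cdots$, the $\mathcal{C}^\infty$-dual of the geometric resolution. A geometric resolution is a projective resolution of the \emph{non-projective} module $\mathcal{F}$, so its dual computes $\mathrm{Ext}^\bullet_{\mathcal{C}^\infty}(\mathcal{F},\mathcal{C}^\infty)$, which has no reason to vanish in positive $E^*$-degree and typically does not for genuinely singular foliations. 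Concretely, in Example \ref{charaa} the transpose of $\delta$ is the map $\mathfrak{X}(\mathbb{R}^3)\to\mathcal{C}^\infty(\mathbb{R}^3)$ sending $X$ to $yX^x-xX^y$, whose image is the ideal $(x,y)$: the constant function $1$ is a $d_E^{(0)}$-cocycle in $\Omega^1_2(E)$ that is not a coboundary. Likewise, for $\mathcal{F}_{\mathfrak{so}_n}$ the dual complex is the Koszul complex with differential $d\varphi\wedge\cdot$, whose top cohomology is the nonzero Milnor algebra. Exterior powers $j_0\geq2$ only make matters worse, since exterior powers of a complex of projective $\mathcal{C}^\infty$-modules need not remain acyclic even where the original complex is. Hence the step ``$d_E^{(0)}\beta_{j_0}=0$ implies $\beta_{j_0}=d_E^{(0)}\gamma_{j_0}$'' fails, and the descent cannot be completed; the lemma you identify as the technical heart of the argument is precisely the statement whose failure makes singular foliations nontrivial.
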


\begin{proof}
Let $(E,l_k,\rho)$ and $(E',l'_k,\rho')$ be two universal Lie $\infty$-algebroids of the singular foliation~${\mathcal F}$. Then by Theorem \ref{theo:onlyOne}, there exists a Lie $\infty$-morphism over $M$ from $E$ to $E'$, denoted $\Phi_{E,E'}$. Equation \eqref{ultra}, applied to functions on $M$, implies that $\rho^*=\Phi_{E,E'}\circ\rho'^*$. Since this is true for every such Lie $\infty$-morphism from $E$ to $E'$, and since they are all homotopic to one another, the canonical isomorphism $\widetilde{\Phi}_{E,E'}:H^\blacktriangle(E')\to H^\blacktriangle(E)$ that they induce at the cohomology level makes the following diagram commutative: 
%
\begin{center}
\begin{tikzcd}[column sep=1.3cm,row sep=1.4cm]
  & H_{\mathrm{dR}}^\blacktriangle ({\mathcal F}) \ar[dl,"\rho'^*" above left] \ar[dr, "\rho^*" above right] & \\
 H^\blacktriangle (E')  \ar[rr, rightarrow ,"\widetilde{\Phi}_{E,E'}" ] &   & H^\blacktriangle(E)
\end{tikzcd} 
\end{center}
 From this diagram, we deduce that for any $\alpha\in H^\blacktriangle_{\mathrm{dR}}(\mathcal{F})$, $\rho^\ast(\alpha)$ and $\rho'^\ast(\alpha)$ induce the same element in the universal foliated cohomology $H_{\mathfrak{U}}(\mathcal{F})$.  We call this element $\rho_{\mathcal{F}}^\ast(\alpha)$. This assignment defines uniquely the height-preserving map $\rho^\ast_{\mathcal{F}}:H_{\mathrm{dR}}^\blacktriangle ({\mathcal F})\to H_{\mathfrak{U}}^\blacktriangle ({\mathcal F})$ that we are looking for.
Now, the map $\rho^*\colon\Omega^k(\mathcal{F})\to \Omega^k_k(E)$ is injective since $\rho$ is surjective. This result being true for every universal Lie $\infty$-algebroid of $\mathcal{F}$, we conclude that the maps $\rho_{\mathcal{F}}^\ast$ is injective.

Let us now show that it is an isomorphism at level 1. 
The fact that $\rho^*\colon\Omega^1(\mathcal{F})\to \Omega^1_1(E)$ is injective implies that it is injective as well at the cohomology level. Let us now show that it is surjective on cohomology.
Let $u\in\Omega^1_1(E)=\Gamma(E_0^*)$ be a $d_{E}$-cocycle, then in particular it means that $d^{(0)}_E(u)=0$, that is: $u\big|_{l_1(\Gamma(E_{-1}))}=0$. Since we have the equality $\mathrm{Im}\big(l_1(\Gamma(E_{-1}))\big)=\mathrm{Ker}(\rho)$, then $u$ vanishes on the kernel of the anchor map. Then it comes from a 1-form on $\mathcal{F}$, i.e. there exists some $\alpha\in \Omega^1(\mathcal{F})$ such that:
\begin{equation}\label{alphau}
u=\rho^*(\alpha)
\end{equation}
This element is unique since $\rho^*$ is injective on $\Omega^1(\mathcal{F})$.  
Since $u$ is a $d_E$-cocycle, we also have that $d_E^{(1)}u=0$, that is: $u\big([a,b]\big)=\rho(a)[u(b)]-\rho(b)[u(a)]$ which, by virtue of Equation \eqref{alphau}, can also be written as $\alpha\big([\rho(a),\rho(b)]\big)=\rho(a)[\alpha(\rho(b))]-\rho(b)[\alpha(\rho(a))]$. This is precisely the condition that $\alpha$ is a cocycle in $\Omega^1(\mathcal{F})$. Then, since $\rho^*$ is a chain map, and since it is injective, we deduce that the closed 1-forms on $\mathcal{F}$ are in one-to-one correspondence with the closed 1-forms on $E$ of degree 0. For the same reasons, exact 1-forms are in one-to-one correspondence as well.  This means that the map $\rho^*\colon\Omega^1(\mathcal{F})\to \Omega^1_1(E)$ is bijective at the cohomology level. This implies that the canonically induced morphism $\rho^\ast_{\mathcal{F}}$ is an isomorphism in degree 1.  \end{proof}

\begin{remarque}
Actually, $\rho_{\mathcal{F}}$ is also bijective at level 0, and it is even the identity map. This is rather obvious since in both case the zero-th cohomology group consists of the functions that are $\mathcal{F}$-invariant.
\end{remarque}


The newly defined map $\rho^*_\mathcal{F}$ actually has the following universal property:
\begin{proposition}
Let $\mathcal{F}$ be a singular foliation admitting a geometric resolution on a smooth manifold~$M$, and let $E$ be a Lie $\infty$-algebroid covering $\mathcal{F}$, with anchor map $\rho$. Then the map $\rho^*:H_{\mathrm{dR}}(\mathcal{F})\to H(E)$ defined by Equation \eqref{eq:pullbackByAnchor} factors through $H_{\mathfrak{U}}(\mathcal{F})$:
\begin{center}
\begin{tikzcd}[column sep=2cm,row sep=1.6cm]
  & H_{\mathfrak{U}} ({\mathcal F}) \ar[d]  \\
 H_{\mathrm{dR}} (\mathcal{F}) \ar[ur,"\rho_{\mathcal{F}}^*" above left]  \ar[r,"\rho^\ast" above]    & H(E)
\end{tikzcd} 
\end{center}
\end{proposition}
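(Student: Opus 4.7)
The plan is to construct the vertical arrow $H_{\mathfrak{U}}(\mathcal{F}) \to H(E)$ explicitly and then verify commutativity of the triangle. Fix once and for all a universal Lie $\infty$-algebroid $E_{\mathcal{F}} \in \mathfrak{E}_{\mathcal{F}}$. Since $E$ covers $\mathcal{F}$, Theorem~\ref{theo:onlyOne} yields a Lie $\infty$-algebroid morphism $\Phi\colon E \to E_{\mathcal{F}}$ over $M$, that is a chain map $\Phi^*\colon \Omega_{\blacktriangle}(E_{\mathcal{F}}) \to \Omega_{\blacktriangle}(E)$, unique up to homotopy. Post-composing with the canonical identification $H^{\blacktriangle}(E_{\mathcal{F}}) \cong H_{\mathfrak{U}}^{\blacktriangle}(\mathcal{F})$ (picking up the representent of each equivalence class), one obtains a height-preserving algebra morphism $\pi_E\colon H_{\mathfrak{U}}^{\blacktriangle}(\mathcal{F}) \to H^{\blacktriangle}(E)$.

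The first task is to check that $\pi_E$ does not depend on the auxiliary choices. Homotopic Lie $\infty$-morphisms induce the same map on cohomology, so the choice of $\Phi$ does not matter. Independence from the choice of universal Lie $\infty$-algebroid $E_{\mathcal{F}}$ is formal: given another $E'_{\mathcal{F}} \in \mathfrak{E}_{\mathcal{F}}$, composing a morphism $E \to E_{\mathcal{F}}$ with a morphism $E_{\mathcal{F}} \to E'_{\mathcal{F}}$ yields a morphism $E \to E'_{\mathcal{F}}$, and Theorem~\ref{theo:onlyOne} forces the two candidate morphisms $E \to E'_{\mathcal{F}}$ to be homotopic. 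The compatibility of $\widetilde{\Phi}_{E_{\mathcal{F}}, E'_{\mathcal{F}}}$ with the canonical identifications defining $H_{\mathfrak{U}}(\mathcal{F})$ then gives the same map $\pi_E$.

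To verify commutativity of the triangle, unwind the definition: for $[\alpha] \in H_{\mathrm{dR}}^{\blacktriangle}(\mathcal{F})$, the class $\rho_{\mathcal{F}}^*([\alpha]) \in H_{\mathfrak{U}}^{\blacktriangle}(\mathcal{F})$ is the equivalence class of $\rho_{E_{\mathcal{F}}}^*(\alpha) \in H^{\blacktriangle}(E_{\mathcal{F}})$, so $\pi_E\bigl(\rho_{\mathcal{F}}^*([\alpha])\bigr)$ is represented in $H^{\blacktriangle}(E)$ by $\Phi^*\bigl(\rho_{E_{\mathcal{F}}}^*(\alpha)\bigr)$. It therefore suffices to show the cochain-level equality
\begin{equation*}
\Phi^*\bigl(\rho_{E_{\mathcal{F}}}^*(\alpha)\bigr) = \rho_E^*(\alpha) \qquad \text{in } \Omega^{\bullet}(E).
\end{equation*}
The key observation is a degree argument: the form $\rho_{E_{\mathcal{F}}}^*(\alpha)$ lies in $\Omega^k_k(E_{\mathcal{F}}) = \Gamma(\wedge^k E_{\mathcal{F},0}^*)$, i.e.\ it has $E^*$-degree zero. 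Since $\Phi^*$ preserves height, only the arity-$0$ component of $\Phi^*$ can contribute to the output of height $k$ with $E^*$-degree zero: all higher-arity components would require negative $E^*$-degrees in $E$, which are unavailable since $E$ is non-positively graded. The arity-$0$ component of $\Phi^*$ is the pullback along the vector bundle morphism $E_0 \to E_{\mathcal{F},0}$ underlying $\Phi$ in degree zero, and because both $E$ and $E_{\mathcal{F}}$ cover the same $\mathcal{F}$ while $\Phi$ is over $M$, this bundle map intertwines the anchors $\rho$ and $\rho_{E_{\mathcal{F}}}$. A direct computation using \eqref{eq:pullbackByAnchor} then yields the desired equality.

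The main obstacle is the last step: one must verify carefully that any Lie $\infty$-morphism $\Phi\colon E \to E_{\mathcal{F}}$ over $M$ between two Lie $\infty$-algebroids covering the same singular foliation necessarily intertwines the anchors at the degree-$0$ level. This is not immediate from the definition of a Lie $\infty$-morphism as an algebra map $\Phi^*$, but it follows from applying \eqref{ultra} to a smooth function $f \in \mathcal{C}^\infty(M)$: combining $d_E^{(1)} = \rho^*$ on functions with the analogous identity on $E_{\mathcal{F}}$ and the fact that $\Phi^*$ restricts to the identity on $\mathcal{C}^\infty(M)$ (since $\varphi = \mathrm{id}_M$) forces the arity-$0$ part $\phi_0\colon E_0 \to E_{\mathcal{F},0}$ to satisfy $\rho_{E_{\mathcal{F}}} \circ \phi_0 = \rho_E$. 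Once this compatibility is in hand, the remainder of the proof is the bookkeeping sketched above.
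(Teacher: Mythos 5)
Your proposal is correct and follows essentially the same route as the paper: obtain a Lie $\infty$-morphism from $E$ to a universal Lie $\infty$-algebroid via Theorem \ref{theo:onlyOne}, pass to cohomology, check independence of choices via composition and homotopy uniqueness, and deduce commutativity from the identity $\Phi^*\circ\rho_{E_{\mathcal{F}}}^*=\rho_E^*$. The only difference is that you spell out this last cochain-level identity (height/arity argument plus anchor intertwining from Equation \eqref{ultra} applied to functions), whereas the paper simply invokes it as already established in the proof of Proposition \ref{hooke}.
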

\begin{proof}
Let $E'$ (resp. $E''$) be a universal Lie $\infty$-algebroid of $\mathcal{F}$, and let $\Phi_{E,E'}\colon\Omega(E')\to\Omega(E)$ (resp. $\Phi_{E,E''}\colon\Omega(E'')\to\Omega(E)$) be an Lie $\infty$-morphism over $M$ between $E$ and $E'$ (resp. $E''$), whose existence is guaranteed by Theorem \ref{theo:onlyOne}. The same theorem insures that any two such morphisms induce a unique graded commutative algebra morphism $\widetilde{\Phi}_{E,E'}:H(E')\to H(E)$ (resp. $\widetilde{\Phi}_{E,E''}:H(E'')\to H(E)$) at the cohomology level. They are such that they make the following diagram commutative:
\begin{center}
\begin{tikzcd}[column sep=2cm,row sep=1.6cm]
  & H (E'') \ar[d, "\widetilde{\Phi}_{E,E''}" right]  \ar[dd, bend left=60, "\widetilde{\Phi}_{E',E''}", right] \\
 H_{\mathrm{dR}} (\mathcal{F}) \ar[ur,"{\rho''}^*" above left] \ar[dr,"{\rho'}^*" below left]  \ar[r,"\rho^\ast" above]    & H(E)\\
   & H (E') \ar[u, "\widetilde{\Phi}_{E,E'}" right]  \\
\end{tikzcd} 
\end{center}
Since  $\widetilde{\Phi}_{E,E''}=\widetilde{\Phi}_{E,E'}\circ\widetilde{\Phi}_{E',E''}$, when passing to the universal foliated cohomology by identifying $H(E')$ and $H(E'')$ through $\widetilde{\Phi}_{E',E''}$, the graded commutative algebra morphisms $\widetilde{\Phi}_{E,E'}$ and $\widetilde{\Phi}_{E,E''}$ induce a canonical graded commutative algebra morphism $\widetilde{\Phi}:H_{\mathfrak{U}}(\mathcal{F})\to H(E)$. 
By commutativity of the diagram and by definition of the map $\rho_{\mathcal{F}}$, this morphism satisfies the identity $\rho^*=\widetilde{\Phi}\circ\rho_{\mathcal{F}}^*$.
\end{proof}

\section{A perspective on modular classes}\label{sec3}

\subsection{The modular class of a Lie $\infty$-algebroid}\label{sec:char}

In this section we adapt the definition of modular classes of Lie algebroids \cite{evensTransverseMeasuresModular1999} to the Lie $\infty$-algebroid context. Modular classes of Lie algebroids  have been defined  as the natural counterpart of the notion of modular vector fields of Poisson manifolds \cite{weinsteinModularAutomorphismGroup1997}. It is conjectured that it  measures the obstruction of the existence of a volume form on the differentiable stack associated to the groupoid integrating the Lie algebroid \cite{weinsteinVolumeDifferentiableStack2009, crainicMeasuresDifferentiableStacks2020}.
The generalization of the notion of modular class to the Lie $\infty$-algebroid context has already been investigated in  \cite{caseiroModularClassLie2022} under the graded symmetric convention for $L_\infty$ brackets, and we merely summarize some of their results here, with the graded skew-symmetric convention.

 Let us first apply the notion of representations up to homotopy to line bundles. Let $E$ be a Lie $\infty$-algebroid over $M$ and let $L$ be a line bundle over $M$ and a representation of $E$, that we assume to be concentrated in degree 0. Then, for degree reasons, the only sub-bundle of $E$ that acts on $L$ is the almost Lie algebroid $E_0$. 
 The notion of Lie algebroid connection straightforwardly extends to almost Lie algebroids when an additional condition is added: 
 
 \begin{definition}
Given a (possibly graded) vector bundle $K\to M$, and an almost Lie algebroid $A$ over $M$, a  (degree 0) differential operator $\nabla\colon\Gamma(A)\times \Gamma(K)\to \Gamma(K)$ satisfying axioms  \eqref{eqrep1} and \eqref{eqrep2} is called an \emph{$A$-connection on $K$}. 
Such a connection is \emph{flat} if the corresponding curvature vanishes:
\begin{equation}\label{eqrep3}
R_\nabla(a,b)=\big[\nabla_a,\nabla_b\big]-\nabla_{l_2(a,b)}=0\hspace{2cm}\text{for every $a,b\in\Gamma(A)$}
\end{equation}
Graded vector bundles $K\to M$ with flat $A$-connections are called \emph{representations of $A$} or \emph{$A$-modules}\footnote{The latter denomination may not be standard -- see e.g. \cite{mehtaLieAlgebroidModules2014} -- but we introduce and use it for its convenience.}. We say that a representation $K$ is \emph{trivial} when there exists a global frame of $K$ on which  the action of $A$ is zero.
\end{definition}

The covariant derivative associated to the connection $\nabla$ satisfies the following behavior:
\begin{align*}
(d^\nabla)^2f(a,b)&=R_\nabla(a,b)(f)\\
(d^\nabla)^2\alpha(a,b,c)&=R_\nabla(a,b)(\alpha(c))\,+\circlearrowleft+\,\alpha\big(l_2(a,l_2(b,c))+l_2(b,l_2(c,a))+l_2(c,l_2(a,b))\big)
\end{align*}
for $f\in\Gamma(K)$ and $\alpha\in\Omega^1(E,K)$. 
The last equation is expected because the 2-bracket $l_2$ does not satisfy the Jacobi identity and so there is no reason that $(d^\nabla)^2=R_\nabla$.
This curvature then satisfies the following Bianchi-like identity:
 \begin{equation}\label{bianchilike}
d^\nabla R_\nabla(a,b,c) = \big[\nabla_a,[\nabla_b,\nabla_c]\big]\,+\circlearrowleft -\nabla_{l_2(a,l_2(b,c))+l_2(b,l_2(c,a))+l_2(c,l_2(a,b))}
 \end{equation}
 The first term on the right-hand side (including the circular permutations) is necessarily zero as a Jacobiator of operators while the second term vanishes if and only if $d^\nabla R_\nabla=0$, which is the case at least when $\nabla$ is flat.

 The notion of representations of almost Lie algebroids obviously gives back the usual notion of representations of Lie algebroids when $A$ is a Lie algebroid. 
Trivial representations of $A$ are characterized by the existence of a global frame which is invariant under the action of $A$. However, the action of $A$ has no reason to be trivial on another global frame, even on a constant one. It is then important to distinguish trivialness as a vector bundle and trivialness as a $A$-module.
Examples of representations of almost Lie algebroids can be induced by representations of Lie $\infty$-algebroids on line bundles:

\begin{lemme}\label{refppmd}
Any representation up to homotopy of the Lie $\infty$-algebroid $E=\bigoplus_{i\geq0}E_{-i}$ over $M$ on a line bundle $L\to M$ canonically induces a representation of the almost Lie algebroid $E_0$ on $L$.
\end{lemme}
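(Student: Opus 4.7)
The plan is to extract, from a representation-up-to-homotopy structure on the degree-zero line bundle $L$, a single piece of data — an $E_0$-connection $\nabla$ — and then to verify that $\nabla$ is flat in the sense of Equations \eqref{eqrep3} and \eqref{equaflat2}. I would proceed in three stages: a degree-counting reduction, then extracting flatness from the low-arity structure equations, and finally using the higher Jacobi identity of $E$ to control the Jacobiator on $\Gamma(E_0)$.

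First I would use total-degree bookkeeping. Since $\mathrm{End}(L)$ is concentrated in degree $0$, the total degree of every $\mathrm{End}(L)$-valued form coincides with its height. The condition that $\omega^{(k)}$ has total degree $+1$ therefore forces height $h=1$. A $0$-form has height $0$, so $\omega^{(0)}=\partial=0$; a $k$-form with $k\geq 2$ would need $E^*$-degree $1-k<0$, so $\omega^{(k)}=0$. Only $\omega^{(1)}\in\Gamma(E_0^*\otimes\mathrm{End}(L))$ survives, and it is exactly the data of an $E_0$-connection $\nabla$ on $L$ (the pieces $\nabla^i$ of the $E$-connection for $i\geq 1$ also vanish, since $L_{-i}=0$).

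Next I would extract flatness from the structure equations $R^{(k)}+\slashed{\partial}\bigl(\omega^{(k)}\bigr)=0$. For $k=2$: $\omega^{(2)}=0$ gives $R^{(2)}=0$, and $R^{(2)}=d_E^{(1)}\omega^{(1)}+\omega^{(1)}\circ\omega^{(1)}$ restricts on $\Gamma(E_0)\wedge\Gamma(E_0)$ to the usual curvature $R_\nabla$, which yields Equation \eqref{eqrep3}. For $k=1$: $R^{(1)}=d_E^{(1)}\omega^{(0)}=0$, so $\slashed{\partial}(\omega^{(1)})=0$; with $\partial=0$, formula \eqref{slashedpartial} reduces to $\slashed{\partial}(\omega^{(1)})(a;u)=-\nabla_{l_1(a)}u$ on the only surviving (height-$2$) piece, i.e.\ on $a\in\Gamma(E_{-1})$. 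This yields $\nabla_{l_1(a)}=0$ for every $a\in\Gamma(E_{-1})$.

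For Equation \eqref{equaflat2}, I would invoke the higher Jacobi identity \eqref{jacobiiiii}: for $a,b,c\in\Gamma(E_0)$ it reduces to $l_2(l_2(a,b),c)+l_2(l_2(b,c),a)+l_2(l_2(c,a),b)=-l_1\bigl(l_3(a,b,c)\bigr)$ because $l_1$ vanishes on $\Gamma(E_0)$. Graded skew-symmetry of $l_2$ then rewrites the left-hand side as $-\bigl(l_2(a,l_2(b,c))+l_2(b,l_2(c,a))+l_2(c,l_2(a,b))\bigr)$, so the Jacobiator appearing in \eqref{equaflat2} lies in $l_1\bigl(\Gamma(E_{-1})\bigr)$, which $\nabla$ kills by the previous step. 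The main obstacle, in my view, is the degree/height bookkeeping in the first two paragraphs: identifying which components of $\omega^{(k)}$ and $\slashed{\partial}(\omega^{(k)})$ actually survive once $L$ is concentrated in degree $0$, and verifying that the height-$2$ piece of $\slashed{\partial}(\omega^{(1)})$ really reduces to $-\nabla_{l_1(a)}$ with the correct sign.
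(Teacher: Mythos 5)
Your proof is correct and follows essentially the same route as the paper's: reduce by degree counting to the single connection $1$-form $\omega^{(1)}|_{\Gamma(E_0)}$, read off $\nabla_{l_1(u)}=0$ and $[\nabla_a,\nabla_b]=\nabla_{l_2(a,b)}$ from the low-arity structure equations, and then obtain Equation \eqref{equaflat2} by feeding $l_3(a,b,c)$ into the first identity via the higher Jacobi identity \eqref{jacobiiiii}. The only difference is that you make explicit the height/total-degree bookkeeping and the derivation of the two identities from $R^{(k)}+\slashed{\partial}\bigl(\omega^{(k)}\bigr)=0$ for $k=1,2$, which the paper states more tersely as being ``for degree reasons.''
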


\begin{proof}
Assume that $L$ is a representation up to homotopy of $E$ concentrated in degree 0. For degree reasons, the only space of $E$ acting on $L$ is $E_0$, and the only connection $k$-form that is not zero is the connection 1-form $\omega^{(1)}$. Thus, there exists an $E_0$-connection $\nabla\colon\Gamma(E_0)\times\Gamma(L)\longrightarrow\Gamma(L)$ satisfying the following set of equations:
 \begin{align}
 \nabla_{l_1(u)}&=0\label{eqnoble1}\\
 [\nabla_a,\nabla_b]&=\nabla_{l_2(a,b)}\label{eqnoble2}
 \end{align}
for every $a,b\in\Gamma(E_0)$ and $u\in\Gamma(E_{-1})$.  The connection is flat because of Equation~\eqref{eqnoble2}.

Eventually notice that Equation \eqref{bianchilike} is satisfied since the higher Jacobi identity \eqref{superjacobi} for three elements of $E_0$ is:
\begin{equation}
l_2\big(a, l_2(b,c)\big)+l_2\big(b, l_2(c,a)\big)+l_2\big(c, l_2(a,b)\big)=l_1\big(l_3(a,b,c)\big)
\end{equation}
so that, for $u=l_3(a,b,c)$, Equation \eqref{eqnoble1} implies that the last term of Equation \eqref{bianchilike} is zero, as should be since the left-hand side is zero too.
\end{proof}

 Assume for a time that the line bundle $L$ is trivial as a vector bundle, and let $\Omega$ be a nowhere vanishing global section. 
  The action of $E$ on $\Omega$ -- in fact only $E_0$ is acting -- 
   is again proportional to $\Omega$, so there exists a 1-form $\theta_\Omega\in\Gamma(E_0^*)$,  called the \emph{modular 1-form (w.r.t. $\Omega$)}, satisfying:
\begin{equation}\label{definitionmodular30}
\nabla_a(\Omega)=\theta_\Omega(a)\,\Omega
\end{equation}
for every $a\in\Gamma(E_0)$. Equation \eqref{eqnoble1} implies that:
\begin{equation}\label{vanishmodularbis}
\theta_\Omega\big|_{\Gamma(l_1(E_{-1}))}=0
\end{equation}
 Furthermore,  Equation \eqref{eqnoble2} is equivalent to:
\begin{equation}\label{vanishmodularter}
\rho(a)\big(\theta_\Omega(b)\big)-\rho(b)\big(\theta_\Omega(a)\big)-\theta_\Omega\big(l_2(a,b)\big)=0
\end{equation}
Using the definition of the differential $d_E$ as given in Equations \eqref{eqdiff1} and \eqref{eqdiff2}, Equations \eqref{vanishmodularbis} and \eqref{vanishmodularter} are equivalent, respectively, to:
 \begin{equation}
d_E^{(0)}\theta_\Omega=0 \qquad\text{and}\qquad d_E^{(1)}\theta_\Omega=0
 \end{equation}

 By definition of the differential $d_E$, these two identities  are in turn equivalent to the fact that $\theta_\Omega$ is $d_E$-closed. In particular it means that this 1-form is a cocycle and defines a class in $H^1(E)$. 
As in the Lie algebroid case, this cohomology class is invariant under the choice of section $\Omega$ of $L$; 
  we denote it by $\theta_L$. When $L$ is not a trivial line bundle, we cannot define $\theta_L$ as above. However, following \cite{evensTransverseMeasuresModular1999}, the line bundle $L^2=L\otimes L$ is trivial so that one can associate the following distinguished cohomology class to $L$:
 \begin{equation}
 \theta_L=\frac{1}{2}\theta_{L^2}
 \end{equation}
Hopefully, both definitions coincide  when $L$ is trivial.

\begin{definition}\label{def:char}
We call \emph{characteristic class of the line bundle $L$} the  class $\theta_L\in H^1(E)$. 
\end{definition} 

This characteristic class is thus attached to a particular representation of $E$ on $L$. 
The following proposition shows that it captures only the action of the almost Lie algebroid $E_0$, as no other vector bundle $E_{-i}$ acts on $L$.

\begin{proposition}\label{propiteratif}
Let $L$ be a trivial line bundle and a representation up to homotopy of a Lie $\infty$-algebroid $E$. Then $L$ is a trivial $E_0$-module if and only if $\theta_L=0$.
\end{proposition}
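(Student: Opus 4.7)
The plan is to adapt the classical Lie algebroid argument to the Lie $\infty$-algebroid setting, exploiting the fact that, by Lemma \ref{refppmd}, only the almost Lie algebroid $E_0$ acts on the line bundle $L$ via a flat connection $\nabla$. Since $L$ is trivial, one fixes a nowhere-vanishing global section $\Omega$ and forms the modular $1$-form $\theta_\Omega\in \Gamma(E_0^\ast)=\Omega^1_1(E)$ as in Equation \eqref{definitionmodular30}. The discussion preceding the proposition shows that $\theta_\Omega$ is $d_E$-closed and that its class $\theta_L\in H^1(E)$ is independent of $\Omega$.

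For the direction ``trivial $E_0$-module $\Rightarrow \theta_L=0$'', I would simply choose an invariant nowhere-vanishing global section $\Omega'$, i.e. one with $\nabla_a(\Omega')=0$ for every $a\in \Gamma(E_0)$. Then by definition $\theta_{\Omega'}=0$, so $\theta_L=[\theta_{\Omega'}]=0$.

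For the converse, suppose $\theta_L=0$, so that $\theta_\Omega = d_E(\beta)$ for some height-$0$ element $\beta\in \Omega_0(E)$. Since $E$ is non-positively graded, $E^\ast$ is non-negatively graded, and the only $k$-forms of height $0$ are the $0$-forms of $E^\ast$-degree $0$, i.e.\ smooth functions on $M$. Hence $\beta=f\in \mathcal{C}^\infty(M)$, and by Equation \eqref{ancre} we have $\theta_\Omega(a)=\rho(a)[f]$ for every $a\in \Gamma(E_0)$. Setting $\Omega':=e^{-f}\Omega$, which is again a nowhere-vanishing global section, the Leibniz rule \eqref{eqrep2} gives
\begin{equation*}
\nabla_a(\Omega')=e^{-f}\nabla_a(\Omega)+\rho(a)\big[e^{-f}\big]\,\Omega=e^{-f}\theta_\Omega(a)\,\Omega-e^{-f}\rho(a)[f]\,\Omega=0
\end{equation*}
for every $a\in \Gamma(E_0)$. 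Thus $\{\Omega'\}$ is an $E_0$-invariant global frame of $L$, so $L$ is a trivial $E_0$-module.

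The argument is essentially a direct transcription of the Lie algebroid case; the only step requiring care is the identification of height-$0$ coboundary sources with functions on $M$, which is not an obstacle but relies on the non-positivity of the grading of $E$. Note also that since $L$ is a priori only a representation \emph{up to homotopy} of $E$, one must be careful that the higher connection forms $\omega^{(k)}$, $k\geq 2$, play no role here—this is exactly the content of Lemma \ref{refppmd}, which reduces the data to the flat $E_0$-connection $\nabla$ that is the only input of the proof.
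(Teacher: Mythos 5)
Your proof is correct and follows essentially the same route as the paper: the paper also takes $\theta_\Omega=d_E(f)$ for a smooth function $f$ and checks that $e^{-f}\Omega$ is an invariant nowhere-vanishing section, and conversely reads off $\theta_{\Omega'}=0$ from an invariant frame. Your extra remark identifying height-$0$ primitives with functions on $M$ is a detail the paper leaves implicit, but it does not change the argument.
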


\begin{proof}
Assume that the line bundle $L$ is trivial as a vector bundle and that it admits a nowhere vanishing global section $\Omega$. Moreover suppose that the characteristic class $\theta_L$ is zero, i.e. that there exists a function $f\in\mathcal{C}^\infty$ such that $\theta_\Omega=d_E(f)$. Then, one can check from Equation  \eqref{definitionmodular30} that the nowhere vanishing section $\mathrm{e}^{-f}\Omega$ is invariant under the action of any section $a$ of $E_0$: 
\begin{equation}
\nabla_a\big(\mathrm{e}^{-f}\Omega\big)=\rho(a)(\mathrm{e}^{-f})\,\Omega+\mathrm{e}^{-f}\nabla_a(\Omega)=\big(-\rho(a)(f)\,+d_E(f)(a)\big)\mathrm{e}^{-f}\Omega=0
\end{equation}
The action of $E_0$ being zero on this section, we conclude that the trivial line bundle $L$ is a trivial representation of $E_0$. 
Conversely, if $L$ is a trivial $E_0$-module, then there exists a global section $\Omega$ of $L$ invariant under the action of $E_0$, meaning that the left-hand side of Equation~\eqref{definitionmodular30} vanishes. This implies that the modular cocycle $\theta_\Omega$ vanishes, hence the result.
\end{proof}

Thus, the characteristic class of the line bundle $L$ measures the trivialness of the representation $L$, not only as a vector bundle, but as a $E_0$-module.  
In particular a trivial line bundle $L$ which is \emph{not} a trivial representation of $E_0$ admits nowhere vanishing global sections, but none of them is zero under the action of $E_0$.
We will now apply this result to a line bundle of particular interest, which is associated to any Lie $n$-algebroid $E$:

\begin{definition} Let $E$ be a Lie $n$-algebroid, for some $n\geq1$. The determinant line bundle of $E$ is called the \emph{Berezinian line bundle of $E$} and is defined, depending on the parity of $n$, as:
\begin{align}
&\text{$n$ even}&&\mathrm{Ber}(E)=\wedge^{\mathrm{top}}T^*M\otimes \wedge ^{\mathrm{top}}E_0\otimes \wedge^{\mathrm{top}} (E_{-1})^*\otimes \wedge^{\mathrm{top}} E_{-2}\otimes \ldots\otimes \wedge^{\mathrm{top}} (E_{-n+1})^{*}&\label{eq:Berezinian}\\
&\text{$n$ odd}&&\mathrm{Ber}(E)=\wedge^{\mathrm{top}}T^*M\otimes \wedge ^{\mathrm{top}}E_0\otimes \wedge^{\mathrm{top}} (E_{-1})^*\otimes \wedge^{\mathrm{top}} E_{-2}\otimes \ldots\otimes \wedge^{\mathrm{top}} E_{-n+1}&\label{eq:Berezinian1}
\end{align}
Here, we consider the spaces $E_{-i}$ as vector spaces and do not keep track of the graduation. 
\end{definition}

In full rigor, the original formula of the Berezinian uses symmetric products for odd graded vector spaces, and alternating products for even graded vector spaces. Once we do not  take graduation into consideration and use only alternating products, the original formula of the Berezinian is canonically isomorphic to the right hand side of Equations \eqref{eq:Berezinian}, \eqref{eq:Berezinian1}. We decided not to keep track of the graduation because it is not used further in the definition of modular classes as well as in computations.

\begin{proposition}\label{bronol}
The Berezinian of the Lie $n$-algebroid $E$ comes with a canonical structure of representation up to homotopy of $E$ (and hence, by Lemma \ref{refppmd}, with a $E_0$-module structure).
\end{proposition}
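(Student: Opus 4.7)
The strategy is to construct the representation up to homotopy of $E$ on $\mathrm{Ber}(E)$ directly, without any auxiliary choice. The crucial observation is that $\mathrm{Ber}(E)$ is a line bundle concentrated in degree zero, so the total-degree analysis following Equation \eqref{eqokkk} forces $\omega^{(0)}=0$ (no non-trivial degree shift is available on a line bundle) and $\omega^{(k)}=0$ for every $k\geq 2$: such a form would need values in $\mathrm{End}(\mathrm{Ber}(E))_{1-h}$ with $h=1$, which contradicts $h\geq k\geq 2$. A representation up to homotopy of $E$ on $\mathrm{Ber}(E)$ therefore amounts, by Lemma \ref{refppmd}, to a flat $E_0$-connection on $\mathrm{Ber}(E)$, which is what I have to produce canonically.

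\textbf{Construction.} For $a\in\Gamma(E_0)$, I would define $\nabla_a$ on a decomposable section $\Omega=\mu\otimes v_0\otimes v_1\otimes\cdots\otimes v_{n-1}$ of $\mathrm{Ber}(E)$, where $\mu\in\Gamma(\wedge^{\mathrm{top}}T^*M)$ and $v_i$ is a top section of $\wedge^{\mathrm{top}}E_{-i}$ or of $\wedge^{\mathrm{top}}(E_{-i})^*$ according to the parity prescribed by \eqref{eq:Berezinian} and \eqref{eq:Berezinian1}, as the total derivation built from the natural action on each tensor factor: the ordinary Lie derivative $\mathcal{L}_{\rho(a)}\mu$ on the volume piece, and on each $\wedge^{\mathrm{top}}E_{-i}$ (respectively $\wedge^{\mathrm{top}}(E_{-i})^*$) the extension as a derivation of $l_2(a,\,\cdot\,)\in\mathrm{End}(\Gamma(E_{-i}))$ (respectively of its negative transpose). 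Axiom \eqref{robinson} applied to sections of $E_0$ and the standard properties of the de Rham Lie derivative directly give \eqref{eqrep1} and \eqref{eqrep2}, so $\nabla$ is a genuine $E_0$-connection.

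\textbf{Flatness.} It remains to verify \eqref{eqnoble1} and \eqref{eqnoble2}. Expanding $[\nabla_a,\nabla_b]-\nabla_{l_2(a,b)}$ factor by factor produces, on each $\Gamma(E_{-i})$, the operator
\begin{equation*}
l_2\big(a,l_2(b,\,\cdot\,)\big)-l_2\big(b,l_2(a,\,\cdot\,)\big)-l_2\big(l_2(a,b),\,\cdot\,\big),
\end{equation*}
which by the higher Jacobi identity \eqref{jacobiiiii} equals, up to sign, the graded commutator $[l_1,l_3(a,b,\,\cdot\,)]$ of degree-preserving operators on $E$. Summing the traces over all $E_{-i}$, the alternating duality signs $(-1)^i$ coming from $\mathrm{Ber}(E)$ combine these pieces into the super-trace of a graded commutator, which vanishes. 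Hence $[\nabla_a,\nabla_b]=\nabla_{l_2(a,b)}$. For $\nabla_{l_1(u)}=0$ with $u\in\Gamma(E_{-1})$, the Leibniz identity \eqref{leibnizzzz} rewrites $l_2(l_1(u),\,\cdot\,)$ as the graded commutator of $l_1$ with $l_2(u,\,\cdot\,)$, whose super-trace over the factors of $\mathrm{Ber}(E)$ vanishes by the same argument.

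\textbf{Canonicity and main obstacle.} Since no auxiliary data (such as a $TM$-connection on $E$) enters the definition, canonicity of the resulting representation up to homotopy is automatic. The main obstacle is the sign bookkeeping in the flatness step: one must track precisely how the alternating duals appearing in \eqref{eq:Berezinian} and \eqref{eq:Berezinian1} combine the trace-of-graded-commutator contributions of each $E_{-i}$ into the super-trace that vanishes. Should this bookkeeping prove unwieldy, an alternative route is to identify the desired representation with the super-trace of the adjoint representation on $K=E\oplus sTM$ from Example \ref{exadjoint}: its dependence on the auxiliary $TM$-connection is off-diagonal in the $E\oplus sTM$ decomposition and therefore invisible under the supertrace, yielding canonicity for free.
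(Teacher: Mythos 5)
Your proposal is correct and follows essentially the same route as the paper: define the action factor by factor via $l_2(a,\cdot)$ on each $\wedge^{\mathrm{top}}E_{-i}$ (with dual sign on the starred factors) and $\mathcal{L}_{\rho(a)}$ on $\wedge^{\mathrm{top}}T^*M$, recognize the resulting modular coefficient as the supertrace of $\mathcal{L}_a$ on $E\oplus sTM$, and obtain both $\nabla_{l_1(u)}=0$ and $[\nabla_a,\nabla_b]=\nabla_{l_2(a,b)}$ from the vanishing of the supertrace on the graded commutators $[l_1,\mathcal{L}_u]$ and $[l_1,l_3(a,b,\cdot)]$ supplied by the higher Jacobi identities. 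Your closing remark about recovering canonicity from the adjoint representation is also exactly the observation the paper makes in the discussion following the proposition.
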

\begin{proof}
Given a frame $e^{(i)}_{1},\ldots, e^{(i)}_{\mathrm{dim}(E_{-i})}$ of $E_{-i}$, and a section $a\in \Gamma(E_0)$, we define the \emph{Lie derivative of $a$} as the unique differential operator $\mathcal{L}_a\colon\Gamma(E_{-i})\to \Gamma(E_{-i})$ which coincides with the 2-bracket:
\begin{equation}\label{exserv}
\mathcal{L}_a\big(e^{(i)}_{k}\big)=l_2\big(a,e^{(i)}_{k}\big)
\end{equation}
It induces a dual differential operator on $(E_{-i})^*$, satisfying the usual formula $\mathcal{L}_a=d_E\iota_a+\iota_ad_E$. The action of the Lie derivative straightforwardly extends as a derivation to the line bundle $\wedge^{\mathrm{dim}(E_{-i})}E_{-i}$. Setting $\mu^{(i)}=e^{(i)}_{1}\wedge \ldots\wedge e^{(i)}_{\mathrm{dim}(E_{-i})}$, we define the smooth function $\mathrm{div}_{\mu^{(i)}}(a)\in \mathcal{C}^\infty(M)$ as the following proportionality coefficient:
\begin{equation}\label{bracos3}
\mathcal{L}_{a}\big(\mu^{(i)}\big)=-\mathrm{div}_{\mu^{(i)}}(a)\cdot \mu^{(i)}
\end{equation}  
By duality, the action of $a$ on the dual line bundle $\wedge^{\mathrm{dim}(E_{-i})}(E_{-i})^*$ -- with dual volume form $\mu^{(i)*}$ -- defines a proportionality coefficient which is minus the latter:
\begin{equation}\label{bracos4}
\mathcal{L}_{a}\big(\mu^{(i)*}\big)=\mathrm{div}_{\mu^{(i)}}(a)\cdot \mu^{(i)*}
\end{equation}
The convention has been chosen so that if $E=E_0=TM$ then we find the usual divergence of a vector field.
Moreover, $E_0$ naturally acts on $\wedge^nT^*M$ with the (usual) Lie derivative $\mathcal{L}_{\rho(a)}$:
\begin{equation}\label{diverg}
\mathcal{L}_{\rho(a)}(\omega)=\mathrm{div}_\omega\big(\rho(a)\big)\cdot \omega
\end{equation}
for any section $\omega$ of $\wedge^nT^*M$.

Let $\Omega=\omega\otimes \mu^{(1)}\otimes \mu^{(2)*}\otimes \mu^{(3)}\otimes\ldots $ be a section of $\mathrm{Ber}(E)$. Given the above discussion, the action of $E_0$ on the Berezinian of $E$ via the Lie derivative amounts to:
\begin{equation}\label{gluttony4}
\mathcal{L}_a(\Omega)=\left(\mathrm{div}_\omega\big(\rho(a)\big)-\sum_{i=0}^{n-1}(-1)^i\mathrm{div}_{\mu^{(i)}}(a)\right)\cdot \Omega
\end{equation}
The sums stops at $i=n-1$ because $E$ is a Lie $n$-algebroid.
The term in parenthesis on the right-hand side of Equation \eqref{gluttony4} corresponds to the supertrace of the operator $\mathcal{L}_a\colon\Gamma(E\oplus sTM)\to\Gamma(E\oplus sTM)$, where $\mathcal{L}_a$ is understood to act as $\mathcal{L}_{\rho(a)}$ on $sTM$.   More generally, any vector bundle endomorphism $T\colon\Gamma(E\oplus sTM)\to \Gamma(E\oplus sTM)$ induces an endomorphism $\widetilde{T}$ of $\mathrm{Ber}(E)$ and is such that:
 \begin{equation}\label{gluttony5}
 \widetilde{T}(\Omega)=\mathrm{Str}_\Omega(T)\cdot\Omega
 \end{equation}
 As expected, the supertrace, when evaluated on the commutator of two operators, is zero:
 \begin{equation}\label{flow}
\mathrm{Str}_\Omega\big([S,T]\big)=0
 \end{equation}
Here, the commutator is understood in the space of vector bundle endomorphisms of the graded vector bundle $E\oplus sTM$.

The 1-form $\theta_\Omega$ associated to the action of $E_0$ on $\mathrm{Ber}(E)$ (see Equation \eqref{definitionmodular30}) thus satisfies:
\begin{equation}
\theta_\Omega(a)= \mathrm{Str}_\Omega\big(\mathcal{L}_a\big)
\end{equation}
First, by the higher Jacobi identity \eqref{leibnizzzz}, we have that,  for any $u\in \Gamma(E_{-1})$, $\mathcal{L}_{l_1(u)}=[l_1,\mathcal{L}_{u}]$. Then, by Equation \eqref{flow}, we deduce that:
\begin{equation}\label{gluttony7}
d^{(0)}_E\theta_\Omega(u)=\theta_\Omega\big(l_1(u)\big)= \mathrm{Str}_\Omega\big([l_1,\mathcal{L}_{u}]\big)=0
\end{equation}
Second, by the Jacobi identity \eqref{jacobiiiii}  we deduce that the Lie derivative satisfies the following identity on $\Gamma(E\oplus sTM)$:
 \begin{equation}\label{gluttony6}
 \mathcal{L}_{l_2(a,b)}-[\mathcal{L}_a,\mathcal{L}_b]=-[l_1,l_3(a,b,\,.\,)]
 \end{equation}
By Equation \eqref{gluttony5}, together with the trace property \eqref{flow} of the supertrace, we deduce that:
\begin{equation}\label{miklouho}
\left(\mathcal{L}_{l_2(a,b)}-[\mathcal{L}_a,\mathcal{L}_b]\right)(\Omega)=-\mathrm{Str}_\Omega\big([l_1,l_3(a,b,\,.\,)]\big)\cdot \Omega=0
\end{equation}
 But, if one expands the left-hand side by using the definition of the Lie derivative, then one obtains:
\begin{equation}
\left(\mathcal{L}_{l_2(a,b)}-[\mathcal{L}_a,\mathcal{L}_b]\right)(\Omega)=\left(\rho(a)\big(\theta_\Omega(b)\big)-\rho(b)\big(\theta_\Omega(a)\big)-\theta_\Omega\big(l_2(a,b)\big)\right)\cdot \Omega
\end{equation}
So we conclude that:
\begin{equation}\label{gluttony8}
d^{(1)}_E\theta_\Omega(a,b)=-\mathrm{Str}_\Omega\big([l_1,l_3(a,b,\,.\,)]\big)=0
\end{equation}
Equations \eqref{gluttony7} and \eqref{gluttony8} show that $\theta_\Omega$ is $d_E$-closed, so that $\mathrm{Ber}(E)$ is indeed a representation (up to homotopy) of the Lie $\infty$-algebroid $E$, and then also of $E_0$ by Lemma \ref{refppmd}.
\end{proof}

The authors in \cite{caseiroModularClassLie2022} define the Berezinian line bundle of \emph{any} representation $K$ of a Lie $\infty$-algebroid $E$, so that the representation of $E$ on $K$ canonically descends to this Berezinian. This allows them to define the characteristic class of such a representation $K$ as the characteristic class of the associated Berezinian bundle. In particular, when $K=E\oplus sTM$, and the corresponding action is the adjoint action defined in Example \ref{exadjoint}, the Berezinian of $E$ inherits a representation of $E$. A priori, this representation would differ from the canonical representation of Proposition~\ref{bronol} by some contribution coming from the last terms of Equations~\eqref{basicc1} and~\eqref{basicc2}. But one can show that in both cases the summation \eqref{gluttony4} implies that these contributions cancel out and we are left with the canonical representation of $E$ on $\mathrm{Ber}(E)$ induced by Equations~\eqref{exserv} and~\eqref{diverg}. In other words, 
the representation of $E$ on $\mathrm{Ber}(E)$ induced by \emph{any} adjoint representation of $E$ coincides with the canonical representation of $E$ on $\mathrm{Ber}(E)$ defined in Proposition \ref{bronol}.
Accordingly, the modular class of the Lie $n$-algebroid  $E$ is the characteristic class of any adjoint representation of $E$ or, equivalently, the characteristic class of the Berezinian of $E$ equipped with the canonical representation defined in Proposition \ref{bronol}:

\begin{definition} \cite{caseiroModularClassLie2022} \label{defmodular}
Let $E$ be a Lie a $n$-algebroid over $M$, then the characteristic class $\theta_{\mathrm{Ber}(E)}$ associated to the canonical representation of $E$ on  the Berezinian line bundle $\mathrm{Ber}(E)$, as defined in Proposition \ref{bronol}, is called the \emph{modular class} of $E$ and is denoted $\theta^E$.
\end{definition}

\begin{remarque}
By Proposition \ref{propiteratif}, as a characteristic class of a line bundle, the modular class measures the trivialness of the Berezinian as a representation of $E_0$: i.e. $\theta^E=0$ if and only if there exists a nowhere vanishing section of $\mathrm{Ber}(E)$, which is zero under the action of $E_0$.  The notion of modular class of Lie $n$-algebroids coincides with that of Lie algebroids when $n=1$.
\end{remarque}

\begin{example}\label{ex:foliationNYC}
By Frobenius' integrability theorem, a regular foliation on a smooth manifold can be understood as an involutive regular distribution $F$ of the tangent bundle $TM$. By abuse of denomination and by analogy with singular foliations, we call $F$ regular foliation too.  
  This sub-bundle becomes a Lie algebroid when equipped with the inclusion $\iota\colon F\to TM$ as anchor map.  
The modular class of $F$  of this \emph{foliation Lie algebroid} is a class in the first cohomology group of the foliated cohomology and is precisely the modular class of the regular foliation. That is to say,  the obstruction to the existence of transverse measures to the leaves, invariant under the flow of vector fields tangent to the leaves (see Section \ref{subsecc3} and \cite{kamberFoliatedBundlesCharacteristic1975}). \end{example}

We now need to be able to compare the modular classes of homotopy equivalent Lie $\infty$-algebroids (see Definition \ref{def:homtequiv}). The notion of modular class of a morphism of Lie algebroids~\cite{kosmann-schwarzbachRelativeModularClasses2005} straightforwardly generalizes to Lie $\infty$-algebroids. Indeed, given a Lie $\infty$-morphism $\Phi\colon \Omega_\blacktriangle(E')\to\Omega_\blacktriangle(E)$, it induces a morphism of algebras $\widetilde{\Phi}\colon H^\blacktriangle(E')\to H^\blacktriangle(E)$  at the level of cohomologies. Then the modular class of $\Phi$ is given by:
 \begin{equation}
 \theta^\Phi=\theta^E-\widetilde{\Phi}(\theta^{E'})
 \end{equation}
In the Lie algebroid context, if $\varphi\colon A\longrightarrow A'$ is a Lie algebroid isomorphism, the modular class $\theta^\varphi$ is zero.
  However, for Lie $\infty$-algebroids, the notion of isomorphism is weakened to that of homotopy equivalence. In that context, the morphism $\widetilde{\Phi}$ is an isomorphism, which turns out to be canonical: it does not depend on the homotopy equivalence, see  Section \ref{singlg}. Then the following result holds \cite{caseiroModularClassLie2022}:
\begin{proposition}\label{interestingprop}
Let $E$ and $E'$ be two Lie $\infty$-algebroids of finite length. 
If $\Phi$ is a homotopy equivalence between $E$ and $E'$, then $\theta^\Phi=0$, i.e. $\Phi$ intertwines the modular classes of $E$ and $E'$.\end{proposition}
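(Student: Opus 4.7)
The plan is to show that $\widetilde{\Phi}(\theta^{E'}) = \theta^E$ in $H^1(E)$ by producing an explicit primitive for the difference. Working locally on a contractible open $U \subseteq M$, I would pick nowhere-vanishing sections $\Omega \in \Gamma(U, \mathrm{Ber}(E))$ and $\Omega' \in \Gamma(U, \mathrm{Ber}(E'))$, so that $\theta^E$ and $\theta^{E'}$ are locally represented by the 1-forms $\theta_\Omega$ and $\theta_{\Omega'}$ defined via Equation~\eqref{definitionmodular30}. The proof of Proposition~\ref{bronol} shows these equal the Lie-derivative supertraces $\theta_\Omega(a) = \mathrm{Str}_\Omega(\mathcal{L}_a)$ and $\theta_{\Omega'}(a') = \mathrm{Str}_{\Omega'}(\mathcal{L}_{a'})$, a description on which the computation will rely.

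Since both $E$ and $E'$ have finite length, the linear part of $\Phi$ restricts to a chain homotopy equivalence between the complexes of finite-rank vector bundles obtained by augmenting $(E_\bullet, l_1)$ and $(E'_\bullet, l'_1)$ with the anchor maps into $sTM$, with homotopy inverse furnished by $\Psi$. By the standard determinant construction for finite chain complexes, such a chain homotopy equivalence canonically induces an isomorphism of line bundles $j \colon \mathrm{Ber}(E') \to \mathrm{Ber}(E)$, and hence a nowhere-vanishing smooth function $f$ on $U$ with $j(\Omega') = f\Omega$. The heart of the argument is then the local identity
\[
\theta_\Omega - \Phi^{*}\theta_{\Omega'} \;=\; d_E(\ln|f|),
\]
from which $\theta^\Phi = [\theta_\Omega - \Phi^{*}\theta_{\Omega'}] = 0$ in $H^1(E)$ follows by the standard argument already used in Section~\ref{sec:char}.

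This identity would be checked by decomposing both sides arity by arity and exploiting the intertwining condition $d_E \circ \Phi = \Phi \circ d_{E'}$ together with the vanishing of supertraces of graded commutators $\mathrm{Str}([-,-])=0$, which was the key mechanism in the proof of Proposition~\ref{bronol}. The main obstacle is bookkeeping the higher components $\Phi^{(k)}$ for $k \geq 2$: while $\Phi^{(0)}$ determines the Berezinian identification $j$ and $\Phi^{(1)}$ provides the chain-map intertwining of the linear $E_0$-actions, the higher arities enter through the Lie $\infty$-morphism equations as homotopies that must contribute only $d_E$-exact corrections to the modular 1-form. I expect these to reduce to supertraces of graded commutators of $l_1$ with the higher components of $\Phi$, exactly as in computations~\eqref{gluttony7} and~\eqref{gluttony8}, and hence to vanish identically after evaluation on $\Omega$ once $j$ has been used to align the two reference volumes. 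A more conceptual alternative, leveraging the finite-length hypothesis, is to invoke Proposition~3.61 of \cite{laurent-gengouxUniversalLieInfinityAlgebroid2020} to replace the abstract homotopies $\Phi \circ \Psi \sim \mathrm{id}$ and $\Psi \circ \Phi \sim \mathrm{id}$ by cylinder homotopies, yielding a smooth path $\Phi_t$ of morphisms along which a transgression argument controls $\tfrac{d}{dt}\theta^{\Phi_t}$ as a $d_E$-exact form.
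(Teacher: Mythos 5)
A preliminary remark: the paper does not actually prove Proposition~\ref{interestingprop}; it imports it from \cite{caseiroModularClassLie2022}, where the argument goes through the homotopy invariance and the functoriality under composition of the modular class of a Lie $\infty$-morphism. So your proposal cannot be checked against an in-paper proof and has to stand on its own. As written, it does not yet do so. You correctly isolate the two mechanisms that make the statement true --- the chain homotopy equivalence of linear parts should identify the Berezinians, and supertraces of graded commutators vanish --- but the entire content of the proof is concentrated in the displayed identity $\theta_\Omega-\Phi(\theta_{\Omega'})=d_E(\ln|f|)$, and that identity is only conjectured (``I expect these to reduce to supertraces of graded commutators\dots''). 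The difficulty being deferred is genuine: by Proposition~\ref{bronol}, $\theta_\Omega(a)=\mathrm{Str}_\Omega(\mathcal{L}_a)$ and $\theta_{\Omega'}(f_1(a))=\mathrm{Str}_{\Omega'}\big(\mathcal{L}'_{f_1(a)}\big)$ are supertraces of operators living on \emph{different} graded bundles, and the linear Taylor coefficient $f_1$ of $\Phi$ is only a quasi-isomorphism --- it fails to be fiberwise invertible exactly where the ranks of the fiberwise cohomologies of the linear parts jump --- so one cannot conjugate and invoke invariance of the supertrace. The comparison necessarily brings in the homotopy $h$ (with $g_1\circ f_1-\mathrm{id}=l_1\circ h+h\circ l_1$) and the quadratic coefficient $f_2$, and showing that all of these contributions reorganize into supertraces of graded commutators plus the single exact term $d_E(\ln|f|)$ is the proof, not a corollary of it.

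Two further points. Working ``locally on a contractible open $U$'' cannot yield $\theta^\Phi=0$: a $d_E$-closed $1$-form that is exact on each chart need not be globally exact (just as in de Rham cohomology), so you must either construct the isomorphism $j\colon\mathrm{Ber}(E')\to\mathrm{Ber}(E)$ globally and canonically --- which the determinant-of-the-acyclic-cone construction does provide, but only after one checks that the relevant homotopies are honest vector bundle maps, a point worth making explicit --- and then take $\Omega$, $\Omega'$ global (or pass to $L\otimes L$ as in Section~\ref{sec:char}). Your closing ``more conceptual alternative'' --- replacing the abstract homotopies by cylinder homotopies via Proposition~3.61 of \cite{laurent-gengouxUniversalLieInfinityAlgebroid2020} and controlling $\tfrac{d}{dt}\theta^{\Phi_t}$ by a transgression --- is in fact much closer to the argument of \cite{caseiroModularClassLie2022}, and is the route I would recommend developing if you want a complete and global proof.
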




\begin{corollaire}\label{prop29}
Let $E$ and $E'$ be two homotopy equivalent Lie $\infty$-algebroids of finite length. Then $E$ and $E'$ are simultaneously unimodular, i.e.  $E$ is unimodular if and only if $E'$ is unimodular.
\end{corollaire}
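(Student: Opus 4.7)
The plan is to reduce the statement directly to Proposition \ref{interestingprop}, which provides precisely the intertwining property of the modular class we need. Recall that a Lie $\infty$-algebroid $E$ is called \emph{unimodular} when its modular class $\theta^E\in H^1(E)$ vanishes, so the corollary amounts to showing that $\theta^E=0$ if and only if $\theta^{E'}=0$.

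Given a homotopy equivalence $\Phi\colon \Omega_\blacktriangle(E')\to\Omega_\blacktriangle(E)$, the remark following Definition \ref{def:homtequiv} ensures that the induced graded commutative algebra morphism $\widetilde{\Phi}\colon H^\blacktriangle(E')\to H^\blacktriangle(E)$ is a canonical isomorphism, and in particular bijective in degree $1$. Proposition \ref{interestingprop} applied to $\Phi$ then gives $\theta^{\Phi}=\theta^E-\widetilde{\Phi}(\theta^{E'})=0$, equivalently $\theta^E=\widetilde{\Phi}(\theta^{E'})$. Combining this identity with the injectivity of $\widetilde{\Phi}$, one concludes that $\theta^E=0$ if and only if $\widetilde{\Phi}(\theta^{E'})=0$ if and only if $\theta^{E'}=0$, which is exactly the claimed equivalence.

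The argument presents no real obstacle: it is a two-line consequence of Proposition \ref{interestingprop} together with the observation that homotopy equivalences induce isomorphisms at the level of Lie $\infty$-algebroid cohomology. The only conceptual point worth highlighting is that, although one could a priori worry about the dependence of $\widetilde{\Phi}$ on the chosen homotopy equivalence $\Phi$, this dependence is irrelevant for the present purpose because any such $\widetilde{\Phi}$ is an isomorphism, and the conclusion only requires its injectivity in degree $1$.
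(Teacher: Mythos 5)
Your proof is correct and follows exactly the route the paper intends: Corollary \ref{prop29} is stated as an immediate consequence of Proposition \ref{interestingprop}, using that the induced map $\widetilde{\Phi}$ on cohomology is an isomorphism so that $\theta^E=\widetilde{\Phi}(\theta^{E'})$ vanishes precisely when $\theta^{E'}$ does. Nothing is missing.
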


\begin{example}\label{examplereg}
Let $A$ be a regular Lie algebroid over $M$, i.e. such that the anchor map $\rho$ has constant rank. In that case its kernel is a vector subbundle of $A$ denoted $\mathrm{Ker}(\rho)$ and called \emph{isotropy Lie algebra bundle}. The image of the anchor map defines a regular foliation $F=\rho(A)$.
One can equip ${E}= A\oplus s^{-1}\mathrm{Ker}(\rho)$ with a (strict) Lie 2-algebroid structure, compatible with the Lie algebroid structure on $A$. The 1-bracket on $E$ is the inclusion of $s^{-1}\mathrm{Ker}(\rho)$ into $A$:
\begin{equation*}
l_1=\iota\circ s:s^{-1}\mathrm{Ker}(\rho) \xrightarrow{\hspace*{0.8cm}} A
\end{equation*}
and it induces an exact sequence of vector bundles:
\begin{center}
\begin{tikzcd}[column sep=0.7cm,row sep=0.4cm]
0\ar[r]&s^{-1}\mathrm{Ker}(\rho)\ar[r,"l_1"]&A\ar[r,"\rho"]&F\ar[r]&0\end{tikzcd}
\end{center}
The 2-bracket between two elements of $\Gamma(A)$ is the Lie algebroid bracket on $A$, and
the 2-bracket between a section $a$ of $A$ and a section $b$ of $s^{-1}\mathrm{Ker}(\rho)$ is defined as:
\begin{equation}
l_2(a,b)= s^{-1}[a,sb]_A
\end{equation}
The 2-bracket $l_2$ satisfies the usual Leibniz rule:
\begin{equation}
l_2(a,fb)=f\,l_2(a,b)+\rho(a)[f]\,b
\end{equation}
for every $a\in\Gamma(A)$, $b\in \Gamma({E})$ and $f\in\mathcal{C}^\infty$. There is no 3-bracket because $A$ is a Lie algebroid.

The quotient vector bundle $\bigslant{A}{\mathrm{Ker}(\rho)}$ is a well-defined vector bundle over $M$, canonically isomorphic to $F$.
Then, the map $\varphi\colon E\to F$ acting as the quotient map $A\longrightarrow\bigslant{A}{\mathrm{Ker}(\rho)}$ on $E_0=A$ and as the zero map on $E_{-1}=s^{-1}{\mathrm{Ker}(\rho)}$ induces a canonical homotopy equivalence between $E$ and the foliation Lie algebroid $F$:
\begin{center}
\begin{tikzcd}[column sep=0.7cm,row sep=0.7cm]
0\ar[r]&s^{-1}\mathrm{Ker}(\rho)\ar[d,"\varphi"]\ar[r,"l_1"]&A\ar[d,"\varphi"]\ar[r,"\rho"]&F\ar[d,"\mathrm{id}"]\ar[r]&0\\
0\ar[r]&0\ar[d]\ar[r]&\bigslant{A}{\mathrm{Ker}(\rho)}\ar[d,"\sim"]\ar[r,"\sim"]&F\ar[d,"\mathrm{id}"]\ar[r]&0\\
0\ar[r]&0\ar[r]&F\ar[r,"\mathrm{id}"]&F\ar[r]&0
\end{tikzcd}
\end{center}
Then Corollary \eqref{prop29} applies so the Lie 2-algebroid $E$ and the Lie algebroid $F$ are simultaneously unimodular: the modular class of $E$ vanishes if and only if that of $F$ vanishes. Two reasons explain this:
first, 
 the Berezinian Line bundle $\mathrm{Ber}(E)=\wedge^{n}T^*M\otimes \wedge^{\mathrm{top}}A\otimes \otimes \wedge^{\mathrm{top}}\mathrm{Ker}(\rho)$ is canonically isomorphic to the line bundle $Q_F=\wedge^{n}T^*M\otimes \wedge^{\mathrm{top}}F$. The respective action of $A$ on $\mathrm{Ber}(E)$ and that of $F$ on $Q_F$ give the same value of the parenthesis in Equation \eqref{gluttony4}.

Second, there is a one-to-one correspondence between closed one-forms on $E$ of height 1 and closed one-forms on $F$.
 More precisely, a closed one-form on $E$ of height 1, say $\theta$, is a section of $A^*$, that satisfies the following two conditions:
\begin{align}
&d^{(0)}_E(\theta)=0 &\Longleftrightarrow&&\theta\big|_{\mathrm{Ker}(\rho)}&=0\label{id:1}\\
&d^{(1)}_E(\theta)=0&\Longleftrightarrow&&\theta\big([a,b]\big)&=\rho(a)\big(\theta(b)\big)-\rho(b)\big(\theta(a)\big)\label{id:2}
\end{align}
for every $a,b\in \Gamma(A)$.
  Identity \eqref{id:2} illustrates the fact that $\theta$ is closed in the Lie algebroid cohomology of $A$. Notice that not every closed form in the Lie algebroid cohomology of $A$ vanish on $\mathrm{Ker}(\rho)$: only those that are closed with respect to the differential on $E$ do so.  Identity \eqref{id:1} shows that such a one form canonically defines a differential one-form $\theta'$ on $F\simeq \bigslant{A}{\mathrm{Ker}(\rho)}$ because it vanishes on the kernel of the anchor map. Since exact one-forms are also in one to one correspondence, we deduce that $H^1(E)$ and $H^1(F)$ are isomorphic, proving the desired correspondence between modular classes. 

\end{example}


The argument presented in Example \ref{examplereg} can be generalized to  Lie $n$-algebroids defining resolutions of the regular foliation $F$:
\begin{proposition}\label{propastute}
Let $F$ be a regular foliation on $M$, and let $E$ be any Lie $n$-algebroid such that its linear part  
defines the following exact sequence of vector bundles:
\begin{equation}\label{cpxe}
\begin{tikzcd}[column sep=0.7cm,row sep=0.4cm]
0\ar[r]&E_{-n+1}\ar[r,"l_1"]&\ldots\ar[r,"l_1"]&E_{-1}\ar[r,"l_1"]&E_{0}\ar[r,"\rho"]&F\ar[r]&0
\end{tikzcd}
\end{equation}
Then $F$ and $E$ are simultaneously unimodular.
\end{proposition}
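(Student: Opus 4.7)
The plan is to deduce this as a direct application of the homotopy-invariance of unimodularity (Corollary \ref{prop29}), by recognizing both $F$ (qua foliation Lie algebroid) and $E$ as universal Lie $\infty$-algebroids of the same singular foliation. This reduces the proposition to a straightforward unpacking of Theorem \ref{theo:onlyOne}, exactly in the spirit of Example \ref{examplereg} but for arbitrary length.

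First I would regard the regular foliation $F\subset TM$ as a singular foliation $\mathcal{F}=\Gamma(F)$ in the sense of Definition \ref{def:sing_fol}; this is legitimate since $F$ is involutive and of constant (finite) rank. The foliation Lie algebroid $(F,[\,\cdot\,,\cdot\,],\iota)$ covers $\mathcal{F}$, and its linear part is the trivial resolution $0\to F\xrightarrow{\,\iota\,}\mathcal{F}\to 0$, which is a geometric resolution of length $1$. Hence $F$ itself is a universal Lie $\infty$-algebroid of $\mathcal{F}$. On the other hand, the hypothesis on $E$ says precisely that the linear part \eqref{cpxe} is a geometric resolution of $\mathcal{F}\simeq F$, so $E\in\mathfrak{E}_{\mathcal{F}}$ as well, now of length $n$.

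At this point Theorem \ref{theo:onlyOne} directly applies twice: using $E$ as the universal Lie $\infty$-algebroid and $F$ as a Lie $\infty$-algebroid covering $\mathcal{F}$, one obtains a Lie $\infty$-morphism $\Phi\colon F\to E$ over $M$; swapping their roles one obtains $\Psi\colon E\to F$ over $M$. The compositions $\Phi\circ\Psi$ and $\Psi\circ\Phi$ are Lie $\infty$-endomorphisms covering $\mathcal{F}$, hence by the uniqueness-up-to-homotopy clause of Theorem \ref{theo:onlyOne} they are homotopic to $\mathrm{id}_{\Omega(E)}$ and $\mathrm{id}_{\Omega(F)}$ respectively. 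Therefore $\Phi$ is a homotopy equivalence in the sense of Definition \ref{def:homtequiv}, and both $E$ and $F$ are of finite length, so Corollary \ref{prop29} concludes that $\theta^E=0$ if and only if $\theta^F=0$, i.e.\ they are simultaneously unimodular.

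I do not expect any essential obstacle: the whole content is absorbed into the two imported results (Theorem \ref{theo:onlyOne} and Corollary \ref{prop29}), and the only verification needed is the harmless observation that the foliation Lie algebroid $F$ qualifies as a length-$1$ universal Lie $\infty$-algebroid of $\mathcal{F}$. If one preferred a more hands-on argument in parallel with Example \ref{examplereg}, one could instead exhibit the Lie $\infty$-morphism $\varphi\colon E\to F$ explicitly as $\rho$ on $E_0$ and $0$ on each $E_{-i}$ for $i\ge 1$; exactness of \eqref{cpxe} makes it a quasi-isomorphism of the underlying chain complexes, and the transfer/universal-property machinery upgrades it to a homotopy equivalence, landing at the same conclusion.
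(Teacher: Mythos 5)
Your proof is correct and follows essentially the route the paper intends: the paper obtains Proposition \ref{propastute} by generalizing Example \ref{examplereg}, whose core is precisely ``exhibit a homotopy equivalence between $E$ and the foliation Lie algebroid $F$, then invoke Corollary \ref{prop29}.'' The only cosmetic difference is that you produce the homotopy equivalence abstractly from Theorem \ref{theo:onlyOne}, by recognizing both $F$ and $E$ as finite-length universal Lie $\infty$-algebroids of the singular foliation $\Gamma(F)$, rather than writing the quasi-isomorphism down explicitly as in Example \ref{examplereg}; this is a legitimate and arguably cleaner packaging of the same argument.
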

The observation made in Proposition \ref{propastute} is the central idea behind the extension of the notion of modular class to the context of singular foliations. Indeed, if one finds a replacement of a singular foliation $\mathcal{F}$ by a Lie $\infty$-algebroid $E$ generalizing the Lie 2-algebroid $A\oplus s^{-1}\mathrm{Ker}(\rho)$, one may define (a representent of) the modular class of $\mathcal{F}$ as the modular class of $E$. 
The natural candidates of such structures are the universal Lie $\infty$-algebroids of $\mathcal{F}$.



\subsection{The modular class of a singular foliation}\label{subsecc3}


Let us first describe the geometric meaning of the modular class of a regular foliation, in order to make sense later of that of a singular foliation.
A Lie algebroid representation of a regular foliation $F$ is the \emph{annihilator bundle} $F^\circ\subset T^*M$, which  is generated by every covector which is zero when evaluated on $F$.
The annihilator bundle is canonically isomorphic to the \emph{conormal bundle} of $F$, which is the vector bundle dual to the normal bundle $\nu(F)$. 
 Then there exists a distinguished flat connection on $F^\circ$,
called \emph{the Bott connection} \cite{kamberFoliatedBundlesCharacteristic1975}, 
 and defined~by:
\begin{equation}\label{definitionbottdual}
\nabla^{\mathrm{Bott}}_u(\xi)=\mathcal{L}_u(\xi)=\iota_u{d}\xi
\end{equation}
where $u\in\Gamma(F)$, $\xi\in\Gamma(F^\circ)$ and $\mathcal{L}_u$ is the Lie derivative. 
Here, flatness can be seen as a consequence of the fact that the Lie derivative $\mathcal{L}\colon\mathfrak{X}(M)\to\mathfrak{X}(M)$ is a Lie algebra morphism. Using the derivation property of the Lie derivative, the Bott connection can be extended to $\Omega^\bullet(F^\circ)=\bigoplus_{1\leq i\leq \mathrm{codim}(F)}\Gamma(\wedge^iF^\circ)$. 
We denote $\wedge^\mathrm{top} F^\circ$ the top power $\wedge^{\mathrm{codim}(F)} F^\circ$, which is then a one-dimensional representation of $F$. 

\begin{definition}\label{defmodular2}
We say that $F$ is \emph{transversely orientable} if $\wedge^\mathrm{top} F^\circ$ is a trivializable vector bundle or, equivalently, if it admits a nowhere vanishing global section.
The modular class $\theta^F$ of a transversely orientable regular foliation $F$ is the characteristic class of the $F$-module $\wedge^{\mathrm{top}}F^\circ$.
\end{definition}

The definition of the modular class of a regular foliation as given in Definition \ref{defmodular2} is consistent with the definition of the modular class presented in Example \ref{ex:foliationNYC}.
This is a consequence of the fact that a short exact sequence of vector bundles:
\begin{center}
\begin{tikzcd}[column sep=0.7cm,row sep=0.4cm] 
0\ar[r]&A\ar[r]&B\ar[r]&C\ar[r]&0
\end{tikzcd}
\end{center}
implies that there is a canonical isomorphism between $\wedge^{\mathrm{top}}B$ and $\wedge^{\mathrm{top}}A\otimes \wedge^{\mathrm{top}}C$. From the following short exact sequence:
 \begin{center}
\begin{tikzcd}[column sep=0.7cm,row sep=0.4cm] 
0\ar[r]&F\ar[r,"\iota"]&TM\big|_U\ar[r]&\nu(F)\ar[r]&0
\end{tikzcd}
\end{center}
we deduce that $\wedge^{\mathrm{top}}TM\big|_U$ is canonically isomorphic to $\wedge^{\mathrm{top}}F\otimes\wedge^{\mathrm{top}}(\nu(F))$. Multiplying both sides by $\wedge^{\mathrm{top}}T^*M\big|_U\otimes\wedge^{\mathrm{top}}(\nu(F))^*$ implies in turn that  the line bundle $\wedge^{\mathrm{top}}(\nu(F))^*\simeq\wedge^{\mathrm{top}}F^\circ$ is canonically isomorphic to the Berezinian bundle of the foliation Lie algebroid $F$: 
\begin{equation}\label{sympaleq2}
\wedge^{\mathrm{top}}F^\circ\simeq\wedge^{\mathrm{top}}T^*M\big|_U\otimes \wedge^{\mathrm{top}}F
\end{equation}

 Obviously, $F$  will be transversely orientable when $F^\circ$ is a rank zero vector bundle, i.e. if $F=TM$, for in that case the line bundle $\wedge^\mathrm{top} F^\circ=\wedge^0 F^\circ=M\times \mathbb{R}$ is trivial.  In other cases, the top exterior power of $F^\circ$ is a rank one subbundle of $\wedge^{\mathrm{codim}(F)} TM$, and even if the latter might be a trivial vector bundle, it may well happen that $\wedge^\mathrm{top} F^\circ$ is not trivial as a vector bundle, but only trivializable.
The modular class of $F$ measures the trivialness --  as a representation of $F$ -- of the line bundle $\wedge^\mathrm{top}(F^\circ)$.  In general, the existence of  a nowhere vanishing global section of the determinant bundle $\wedge^{\mathrm{top}}F^\circ$ does not necessarily entail that this section is invariant under the flow of vector fields of $F$.  At least, we deduce from Equation~\eqref{definitionbottdual} that if a chosen transverse volume form is exact, then it is necessarily $F$-invariant.
The vanishing of the modular class of the foliation is precisely equivalent to the existence of an \emph{invariant} volume form transverse to the regular foliation $F$,  this is why we originally require that $F$ is transversely orientable.
The following examples are providing some situations illustrating the notion of invariant transverse volume forms and the subsequent vanishing of the modular classes:

\begin{example}\label{ex3} The regular foliation consisting of concentric circles on the punctured plane $\mathbb{R}^2-\{(0,0)\}$ is induced by the irrotational vector field $v=-y\partial_x+x\partial_y$. 
It admits a transverse measure $\omega=d(x^2+y^2)$ which is actually invariant because it is exact.
\end{example}

\begin{example}
The regular foliation of $\mathbb{R}^2$ corresponding of horizontal lines in the lower half-plane, and of parabolas of equations $y=a(x^2+1)$ (for $a\geq0$) in the upper half-plane, is generated by the following vector field:
\begin{equation}
v(x,y)=\begin{cases}\partial_x &\text{whenever $y\leq0$}\\
\partial_x+2ax\partial_y &\text{whenever there exists $a\geq0$ such that $y=a(x^2+1)$}
\end{cases}
\end{equation}
Notice that for $a=0$ both definitions coincide. Then it turns out that the differential one-form $\omega=d\big(y-a(x^2+1)\big)$ is an invariant transverse volume form to the foliation.
\end{example}

\begin{example}
In $\mathbb{R}^2\backslash B^2$, where  $B^2$ is the closed 2-ball, let us define the following set of parametrized curves:
\begin{equation}
\forall\ \varphi\in[0,2\pi[\qquad L_{\varphi}=\big\{\big(e^t\mathrm{cos}(\varphi+t),e^t\mathrm{sin}(\varphi+t)\big)\ \big|\ t>0\big\}
\end{equation}
These curves foliate the open set $\mathbb{R}^2\backslash B^2$ into spirals. The tangent vector to a leaf at a given point $(x,y)$ is the one-dimensional subspace $F_{(x,y)}$ of $T_{(x,y)}M$ generated by the element $v=(x-y)\partial_x+(x+y)\partial_y$, which is the velocity vector tangent to the leaf at $(x,y)$. The fiber of the conormal bundle $F^\circ\subset T^*M$ at the same point $(x,y)$ is generated by the covector $\omega=(x+y)dx-(x-y)dy$. This plays the role of a transverse measure to the foliation since $F^\circ$ is one-dimensional. Notice that $\omega$ is not exact hence the Bott connection evaluated on the vector field $v$ does not vanish on it:
\begin{equation}\label{exe1}
\nabla^\mathrm{Bott}_v(\omega)=-2\, \iota_v dx\wedge dy=2\, \omega
\end{equation}
Since the vector bundle $F$ is of rank one, then every vector field tangent to the leaves is colinear to $v$, so that the modular 1-form $\theta_\omega$ can be directly read on Equation \eqref{exe1}:
\begin{equation}
\theta_\omega(v)=2
\end{equation}

The differential 1-form $\theta_\omega$ is not zero, but it may occur that its cohomology class in $H^1(F)$ is zero, i.e.  that it could be written as an exact 1-form: $\theta_\omega= d_F h$ for some function $h$. In particular this function would have to satisfy the following first-order partial differential equation:
\begin{equation}
(x-y)\frac{\partial h}{\partial x}+(x+y)\frac{\partial h}{\partial y}=2
\end{equation}
A solution of this equation on $\mathbb{R}^2\backslash B^2$ is $h=\frac{1}{2}\mathrm{ln}(x^2+y^2)$. Hence the differential one-form $\theta_\omega$ is exact 
so the modular class $\theta^F$ of the regular foliation $F$ vanishes, which means that there exists an invariant transverse measure to the foliation. Letting $r^2=x^2+y^2$, one can check that the Bott connection indeed vanishes on the following measure:
\begin{equation}
\omega_{\mathrm{inv}}=\frac{1}{r}\omega
\end{equation}


\end{example}


 We cannot straightforwardly reproduce Definition \ref{defmodular2} for singular foliations because the normal bundle and the conormal bundle is not well-defined everywhere in that case. 
Indeed, since the leaves of a singular (non-regular) foliation $\mathcal{F}$ have various dimensions -- and thus so have their transversals -- the counterpart of the normal bundle is not a vector bundle. The wise reader would notice that defining the normal bundle is not necessary, as 
 one only needs to find the counterpart of the line bundle  $\wedge^{\mathrm{top}}F^\circ$ for singular foliations in order to define modular classes of the latter.  The simplest idea would be to pick up the line bundle $\wedge^{\mathrm{top}}F^\circ$ generated by the regular distribution $F$ induced from $\mathcal{F}$, but then we are confronted with the problem of extending this line bundle at singularities.
  More precisely, assume that the singular $\mathcal{F}$ has regular leaves of the same, maximal dimension. 
 It is a fact that the union of the regular leaves form a dense open subset $U$ of $M$.  Thus, on $U$, the singular foliation $\mathcal{F}$ induces a regular distribution $F$ admitting a conormal bundle canonically isomorphic to $F^\circ\subset T^*M|_U$. However, this vector subbundle does not necessarily extends to the singular leaves of $\mathcal{F}$, implying that there is no straightforward way of defining the modular class of the singular foliation $\mathcal{F}$ in the sense of Definition \ref{defmodular2}, as the following discussion shows.

\begin{example}\label{ex4} 
 On the punctured plane $U=\mathbb{R}^2-\{(0,0)\}$, the vector bundle $F^\circ$ of Example \ref{ex3} is a subbundle of $T^*\mathbb{R}^2\big|_U$ which cannot be extended at the origin. Indeed,  at each point $(x,y)$ of the punctured plane, $F^{\circ}_{(x,y)}$ is the one dimensional subspace of $T^*_{(x,y)}\mathbb{R}^2\simeq \mathbb{R}^2$ colinear to the vector $(x,y)$, so it has no unequivocal definition at the origin. 
Although the vector bundle $F^{\circ}$ -- or equivalently $\wedge^{\mathrm{top}}F^\circ$, because $F^\circ$ has rank one -- may not be extended at the origin, it admits a nowhere vanishing global section $\omega=d(x^2+y^2)$ on the punctured plane.
\end{example}

Although this is a problem to make sense of invariant transverse volume at singularities, we can nonetheless get inspired by Definition \ref{defmodular2} to define the modular class of $\mathcal{F}$.
From the notion of Bott connection, we will keep the idea that a singular foliation acts on a distinguished line bundle. We then need only define what is a representation of a singular foliation, by extending the notion of connection of almost Lie algebroids.

\begin{definition}\label{defkitani} Let $\mathcal{F}$ be a singular foliation, then an $\mathcal{F}$-connection on a (graded) vector bundle $K$ is an operator $\nabla\colon\mathcal{F}\times \Gamma(K)\longrightarrow \Gamma(K)$, satisfying the usual axioms \eqref{eqrep1}-\eqref{eqrep2}. The connection is said  \emph{flat} if its curvature vanishes. In that case $K$ is said to be a \emph{representation of $\mathcal{F}$}, or a \emph{$\mathcal{F}$-module}. 
We say that a section $s\in\Gamma(K)$ is \emph{$\mathcal{F}$-invariant} when 
\begin{equation}\text{$\nabla_{u}(s)=0$, for every $u\in\mathcal{F}$.}\end{equation} A representation $K$ of $\mathcal{F}$ is \emph{trivial}  when there exists a global frame of $K$ which is $\mathcal{F}$-invariant. 
\end{definition}

\begin{example}
Let $\mathcal{F}=\mathfrak{X}(M)$ and $K=M\times\mathbb{R}$, so sections of $K$ are smooth functions on $M$. Let $\nabla$ be the $TM$-connection on $K$ defined as the standard action of vector fields:
\begin{equation}
\nabla_X(f)=X(f)
\end{equation}
for every vector field $X$ and smooth function $f$. This is a flat connection and the constant functions are $\mathcal{F}$-invariant so $K$ is a trivial $\mathcal{F}$-module.
\end{example}

 
 Given a trivial line bundle $L$, which is additionally a $\mathcal{F}$-module, the characteristic class of $L$ with respect to the action of $\mathcal{F}$  is the cohomology class -- in the foliated de Rham cohomology of $\mathcal{F}$ -- of the foliated one form $\theta_\Omega\in\Omega^1(\mathcal{F})$, defined as in identity \eqref{definitionmodular30}. 
  This may provide us with a definition of modular class for singular foliations once we find the correct line bundle.   Proposition \ref{propastute} has shown that the modular class of a regular foliation $F$ can be equivalently computed from any Lie $n$-algebroid forming a geometric resolution of $F$. It is thus natural to extend this notion to singular foliations. Since we want to work with Lie $n$-algebroids in order to define their Berezinian, from now on we will only consider  \emph{solvable} singular foliations. 
  By Theorem \ref{theo:existe}, such a foliation  admits at least one universal Lie $\infty$-algebroid $E$ of finite length.
The fact that the singular foliation admits a universal Lie $\infty$-algebroid of finite length implies that all regular leaves of $\mathcal{F}$ have the same, maximal dimension (see Proposition 2.5 in \cite{laurent-gengouxUniversalLieInfinityAlgebroid2020}).  We set $U$ to be the dense open subset of $M$ consisting of the union of such regular leaves, and the regular distribution induced by $\mathcal{F}$ on $U$  is denoted $F$.  Since $\mathcal{F}\big|_U\subset \Gamma(F)\big|_U$ as sheaves over $U$, the Bott connection associated to the action of $F$ on $F^\circ$ induces an $\mathcal{F}\big|_U$-connection on $F^\circ$ (over~$U$):
 \begin{equation}\label{botte}
 \nabla^{\mathrm{Bott}}_X(\xi)=\iota_X{d}\xi
\end{equation}
for every $X\in\mathcal{F}\big|_U$ and $\xi\in\Gamma(F^\circ)$. This action canonically extends to $\wedge^{\mathrm{top}}F^\circ$, turning it into a $\mathcal{F}\big|_U$-module.

 \begin{proposition}\label{propirenee}
 Given the above assumptions and notations, the Berezinian line bundle of $E$ satisfies the following two criteria:
 \begin{enumerate}
\item it is canonically isomorphic to $\wedge^{\mathrm{top}}F^\circ$ on $U$, via an isomorphism $\varphi\colon\mathrm{Ber}(E)\big|_U\to\wedge^{\mathrm{top}}F^\circ$;
\item it is a $\mathcal{F}$-module, and $\varphi$ is a morphism of $\mathcal{F}\big|_U$-modules. 
\end{enumerate}
 \end{proposition}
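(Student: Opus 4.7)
The plan is to first establish the canonical isomorphism $\varphi$ using the standard ``determinant of an exact sequence'' construction, and then to check that the $E_0$-module structure on $\mathrm{Ber}(E)$ from Proposition \ref{bronol} descends to an $\mathcal{F}$-module structure which is intertwined by $\varphi$ with the Bott connection on $\wedge^{\mathrm{top}}F^\circ$ over $U$.

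For item 1, I would restrict the linear part of $E$ to the dense open subset $U$. Since on $U$ the singular foliation $\mathcal{F}|_U$ coincides (as a sheaf) with $\Gamma(F)$, the resolution property yields an exact sequence of vector bundles
\begin{equation*}
0\longrightarrow E_{-n+1}|_U\xrightarrow{l_1}\ldots\xrightarrow{l_1}E_{-1}|_U\xrightarrow{l_1}E_{0}|_U\xrightarrow{\rho}F\longrightarrow 0.
\end{equation*}
Iterating the standard isomorphism $\wedge^{\mathrm{top}}B\simeq\wedge^{\mathrm{top}}A\otimes\wedge^{\mathrm{top}}C$ attached to each short exact sequence obtained by splitting this resolution (or equivalently, invoking at once the triviality of the alternating tensor product of top exterior powers along an exact sequence) produces a canonical isomorphism
\begin{equation*}
\wedge^{\mathrm{top}}F\simeq \wedge^{\mathrm{top}}E_0\otimes(\wedge^{\mathrm{top}}E_{-1})^*\otimes\wedge^{\mathrm{top}}E_{-2}\otimes\ldots\otimes(\wedge^{\mathrm{top}}E_{-n+1})^{(\pm 1)}
\end{equation*}
on $U$, the last exponent depending on the parity of $n$ exactly as in Equations \eqref{eq:Berezinian}-\eqref{eq:Berezinian1}. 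Tensoring this identity with $\wedge^{\mathrm{top}}T^*M|_U$ and combining with Equation \eqref{sympaleq2} produces the desired canonical isomorphism $\varphi\colon\mathrm{Ber}(E)|_U\xrightarrow{\sim}\wedge^{\mathrm{top}}F^\circ$.

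For item 2, I would first promote the canonical $E_0$-action on $\mathrm{Ber}(E)$ of Proposition \ref{bronol} into an $\mathcal{F}$-action. Given $X\in\mathcal{F}$, choose locally a section $a\in\Gamma(E_0)$ with $\rho(a)=X$ and set $\nabla_X:=\mathcal{L}_a$. Well-definedness requires that $\mathcal{L}_{l_1(u)}$ vanish on $\mathrm{Ber}(E)$ for every $u\in\Gamma(E_{-1})$: the modular 1-form satisfies $\theta_\Omega(l_1(u))=0$ by Equation~\eqref{gluttony7}, and since $\rho\circ l_1=0$ by Equation~\eqref{eq:direct}, the Leibniz rule shows that $\mathcal{L}_{l_1(u)}$ annihilates every section of $\mathrm{Ber}(E)$, not merely the reference section $\Omega$. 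Flatness then follows from the relation $[\mathcal{L}_a,\mathcal{L}_b]=\mathcal{L}_{l_2(a,b)}$ (obtained by applying $\mathrm{Str}_\Omega$ to the Jacobi identity, as in Equation \eqref{miklouho}) combined with $\rho(l_2(a,b))=[\rho(a),\rho(b)]$ from Equation \eqref{jpp}, turning $\mathrm{Ber}(E)$ into an $\mathcal{F}$-module in the sense of Definition~\ref{defkitani}.

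It remains to check that $\varphi$ intertwines this $\mathcal{F}|_U$-action with the Bott action on $\wedge^{\mathrm{top}}F^\circ$; this is where I expect the main technical work. I would do it by a local computation in a frame adapted to the resolution on $U$. Choose bases $(e^{(i)}_k)$ of each $E_{-i}|_U$ so that $(e^{(0)}_1,\ldots)$ projects under $\rho$ onto a frame of $F$ completed to a frame of $TM|_U$; then the isomorphism $\varphi$ reads explicitly in terms of wedge products and dualities of these volume forms. A direct unfolding of Equation~\eqref{gluttony4} for $a\in\Gamma(E_0)$ with $\rho(a)=X$ expresses $\mathcal{L}_a^{\mathrm{Ber}(E)}$ as an alternating sum of divergences $\mathrm{div}_{\mu^{(i)}}(a)$, while the corresponding Bott divergence on $\wedge^{\mathrm{top}}F^\circ$ expands, via Equation~\eqref{sympaleq2}, into the divergences of the induced actions on $\wedge^{\mathrm{top}}F$ and $\wedge^{\mathrm{top}}T^*M|_U$; the exact sequence forces these partial traces to telescope and agree. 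The conceptually cleanest way to see this is to observe that, on $U$, Proposition \ref{propastute} identifies $E|_U$ with a resolution of the foliation Lie algebroid $F$, so that the canonical representation on $\mathrm{Ber}(E)|_U$ must coincide with the pullback along $\rho$ of the Bott representation on $\wedge^{\mathrm{top}}F^\circ$, the independence of the resolution being exactly the content of Proposition \ref{interestingprop} applied to the homotopy equivalence with the foliation Lie algebroid.
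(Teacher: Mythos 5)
Your proposal follows essentially the same route as the paper: item~1 by splitting the restricted resolution into short exact sequences and iterating the determinant isomorphism into Equation~\eqref{sympaleq2}; the $\mathcal{F}$-module structure by lifting $X\in\mathcal{F}$ to a preimage $a\in\Gamma(E_0)$, checking independence of the lift via $\mathcal{L}_{l_1(u)}=0$ (your explicit remark that this kills \emph{every} section, not just $\Omega$, because $\rho\circ l_1=0$, is a welcome precision the paper leaves implicit), and flatness from $[\mathcal{L}_a,\mathcal{L}_b]=\mathcal{L}_{l_2(a,b)}$ together with Equation~\eqref{jpp}. For the intertwining, your ``telescoping divergences in an adapted frame'' is a computational rendering of what the paper does structurally: it observes that $l_1(E_{-i})$ is stable under the adjoint action of $E_0$ (via the Jacobi identity \eqref{leibnizzzz} and $l_1\circ l_2=l_2\circ l_1$), so each short exact sequence in the splitting is a sequence of $E_0$-modules and the determinant isomorphisms automatically intertwine the actions, with the induced action on the quotient $E_0/l_1(E_{-1})\simeq F$ matching the tautological action of $F$ on itself by Equation~\eqref{jpp}. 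The one point to be careful about is your closing ``conceptually cleanest'' shortcut: Proposition~\ref{interestingprop} only asserts that a homotopy equivalence intertwines the modular \emph{classes} in cohomology, which shows that the two line bundles carry representations with the same characteristic class, i.e.\ that \emph{some} module isomorphism exists after rescaling; it does not by itself show that the specific canonical isomorphism $\varphi$ built in item~1 is a morphism of $\mathcal{F}\big|_U$-modules, which is what the proposition claims. So you should keep the direct argument (telescoping, or the paper's stability-of-submodules version) as the actual proof and treat the appeal to Propositions~\ref{propastute} and~\ref{interestingprop} only as a consistency check.
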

 
 \begin{proof} Our goal is to replace $\wedge^{\mathrm{top}}F$ by the alternating powers of the Lie $n$-algebroid $E$, following the same argument leading to Equation \eqref{sympaleq2}.
On the open dense subset $U$, the linear part of the Lie $n$-algebroid $E$ is a geometric resolution of $F$ (for brevity, we omit to write the restriction to $U$ in the following):
\begin{center}
\begin{tikzcd}[column sep=0.7cm,row sep=0.4cm]
0\ar[r]&E_{-n+1}\ar[r,"l_1"]&\ldots\ar[r,"l_1"]&E_{-1}\ar[r,"l_1"]&E_{0}\ar[r,"\rho"]&F\ar[r]&0
\end{tikzcd}
\end{center}
Since the rank of each map is constant, one can split this long exact sequence of vector bundles into various short exact sequences of vector bundles:
\begin{center}
\begin{tikzcd}[column sep=0.7cm,row sep=0.4cm] 
0\ar[r]&E_{-n+1}\ar[r]&E_{-n+2}\ar[r]&l_1(E_{-n+2})\ar[r]&0\\
0\ar[r]&l_1(E_{-n+2})\ar[r]&E_{-n+3}\ar[r]&l_1(E_{-n+3})\ar[r]&0\\
&&\ldots&&\\
0\ar[r]&l_1(E_{-2})\ar[r]&E_{-1}\ar[r]&l_1(E_{-1})\ar[r]&0\\
0\ar[r]&l_1(E_{-1})\ar[r]&E_{0}\ar[r]&F\ar[r]&0
\end{tikzcd}
\end{center}

Then one has $\wedge^{\mathrm{top}}E_0\simeq \wedge^{\mathrm{top}}l_1(E_{-1})\otimes \wedge^{\mathrm{top}}F$, which in turn implies that $\wedge^{\mathrm{top}}F\simeq \wedge^{\mathrm{top}}E_0\otimes \wedge^{\mathrm{top}}\big(l_1(E_{-1})\big)^*$. Let us find a replacement for $\wedge^{\mathrm{top}}\big(l_1(E_{-1})\big)^*$ now. We have that $\wedge^{\mathrm{top}}E_{-1}\simeq \wedge^{\mathrm{top}}l_1(E_{-2})\otimes \wedge^{\mathrm{top}}l_1(E_{-1})$ so finally $\wedge^{\mathrm{top}}\big(l_1(E_{-1})\big)^*\simeq \wedge^{\mathrm{top}}l_1(E_{-2})\otimes \wedge^{\mathrm{top}}(E_{-1})^*$. This implies that:
\begin{equation}\label{intertween}
\wedge^{\mathrm{top}}F\simeq \wedge^{\mathrm{top}}E_0\otimes \wedge^{\mathrm{top}}(E_{-1})^*\otimes\wedge^{\mathrm{top}}l_1(E_{-2})
\end{equation}
where the right-hand side is assumed to be restricted to $U$ of course.
Continuing the process up to $E_{-n+1}$, and plugging $\wedge^{\mathrm{top}}F$ into Equation \eqref{sympaleq2}, we deduce that there is canonical isomorphism of line bundles:
\begin{equation}
\wedge^{\mathrm{top}}F^\circ\simeq\mathrm{Ber}(E)\big|_U
\end{equation}
We denote $\varphi\colon\mathrm{Ber}(E)\big|_U\to\wedge^{\mathrm{top}}F^\circ$ the corresponding isomorphism.

By Proposition \ref{bronol}, the Berezinian is a $E_0$-module. This structure canonically induces a $\mathcal{F}$-module structure via the anchor map. Indeed, for a given $X\in\mathcal{F}$, let $a\in\Gamma(E_0)$ be a preimage of $X$ through $\rho$, i.e. such that $\rho(a)=X$, so we define the action of $X$ on a section $\Omega$ of $\mathrm{Ber}(E)$ as:
\begin{equation}\label{eqsmash}
\nabla^{\mathcal{F}}_X(\Omega)=\mathcal{L}_a(\Omega)
\end{equation}
where the right-hand side is defined in Equation \eqref{gluttony4}. The connection $\nabla^{\mathcal{F}}$ does not depend on the preimage of $X$, for the following reason: if one had chosen another preimage $a'\in\Gamma(E_0)$ of $X$, then $\rho(a-a')=0$ so there exists $b\in \Gamma(E_{-1})$ such that $l_1(b)=a-a'$ and $\mathcal{L}_{l_1(b)}(\Omega)=0$ by Equation \eqref{gluttony7}.
 The reason why $l_1(b)=a-a'$ is explained by the fact that $E$ is a universal Lie $\infty$-algebroid of $\mathcal{F}$, which means that its linear part is a geometric resolution of $\mathcal{F}$, at the sheaf level. The fact that $\mathrm{Ber}(E)$ is a $E_0$-module implies that the connection $\nabla^{\mathcal{F}}$ is flat, turning $\mathrm{Ber}(E)$ into a $\mathcal{F}$-module.
 

 The line bundle $\wedge^{\mathrm{top}}T^*M\big|_U\otimes \wedge^{\mathrm{top}}F$ is a representation of $F$ under the canonical action of the Lie derivative of vector fields. 
 Since the Bott connection on the normal bundle $\nu(F)$ is the quotient representation induced by the canonical action of $F$ on $TM$ vie the Lie derivative, it is straightforward to check that the line bundles $\wedge^{\mathrm{top}}\nu(F)^*$ and $\wedge^{\mathrm{top}}T^*M\big|_U\otimes\wedge^{\mathrm{top}}F$ are not only isomorphic as line bundles, but also as $F$-modules, and hence as $\mathcal{F}\big|_U$-modules as well. From this, we deduce that Equation \eqref{sympaleq2} also holds at the level of $\mathcal{F}\big|_U$-modules.
 
 Now, the regular foliation $F$ is canonically isomorphic to the quotient vector bundle $\left.\bigslant{E_0}{l_1(E_{-1})}\right|_U$ and the identification can be made via the anchor map $\rho$. Since $l_1(E_0)=0$, the Jacobi identity~\eqref{leibnizzzz} implies that $l_1(E_{-1})$ is stable under  the adjoint action of $E_0$ on itself: $l_2(E_0,l_1(E_{-1}))\subset l_1(E_{-1})$. This action then passes to the quotient and, by Equation \eqref{jpp}, the induced action of $E_0$ on $F\simeq\left.\bigslant{E_0}{l_1(E_{-1})}\right|_U$ coincides with the canonical action of $F$ on itself:
 \begin{equation}
 a\cdot \rho(b)=\rho(l_2(a,b))=[\rho(a),\rho(b)]
 \end{equation}
 It means in particular that the canonical isomorphism between $\wedge^{\mathrm{top}}F$ and $\wedge^{\mathrm{top}}E_0\otimes \wedge^{\mathrm{top}}\big(l_1(E_{-1})\big)^*$ intertwines the action of $E_0$ on both sides. 
 One step further, we know that the vector bundle $l_1(E_{-1})$ is canonically isomorphic to the quotient vector bundle $\left.\bigslant{E_{-1}}{l_1(E_{-2})}\right|_U$, and the identification can be made via $l_1$. By the identities $l_1(E_0)=0$ and $l_1\circ l_2=l_2\circ l_1$, all terms of the short exact sequence:
 \begin{center}
\begin{tikzcd}[column sep=0.7cm,row sep=0.4cm] 
0\ar[r]&l_1(E_{-2})\ar[r]&E_{-1}\ar[r]&l_1(E_{-1})\ar[r]&0
\end{tikzcd}
\end{center}
 are stable under the adjoint action of $E_0$. 
  From this we deduce that the isomorphism $\wedge^{\mathrm{top}}E_{-1}\simeq \wedge^{\mathrm{top}}l_1(E_{-2})\otimes \wedge^{\mathrm{top}}l_1(E_{-1})$ intertwines the action of $E_0$. In turn this implies that the isomorphism of line bundles presented in Equation \eqref{intertween} intertwines the action of $E_0$.

  By going up the ladder of short exact sequences we deduce that the canonical isomorphism:
 \begin{equation}\label{intertween2}
\wedge^{\mathrm{top}}F\simeq \wedge^{\mathrm{top}}E_0\otimes \wedge^{\mathrm{top}}(E_{-1})^*\otimes\wedge^{\mathrm{top}}E_{-2}\otimes \wedge^{\mathrm{top}}(E_{-3})^*\otimes\ldots\big|_U
\end{equation}
 intertwines the action of $E_0$ on both sides. Since the action of $E_0$ is a lift of the action of $\mathcal{F}$, we deduce that the action of $\mathcal{F}\big|_U$ of both sides of Equation \eqref{intertween2} commute with the corresponding canonical isomorphism. Then, the isomorphism between $\wedge^{\mathrm{top}}T^*M\big|_U\otimes\wedge^{\mathrm{top}}F$ and $\mathrm{Ber}(E)\big|_U$ intertwines the action of $\mathcal{F}\big|_U$. Together with the observation that $\wedge^{\mathrm{top}}\nu(F)^*$ and the Berezinian $\wedge^{\mathrm{top}}T^*M\big|_U\otimes\wedge^{\mathrm{top}}F$ are isomorphic $\mathcal{F}\big|_U$-modules, we deduce that $\wedge^{\mathrm{top}}\nu(F)^*$ and $\mathrm{Ber}(E)\big|_U$ are not only canonically isomorphic as line bundles, but also as $\mathcal{F}\big|_U$-modules.
 \end{proof}




 
Item 2. of Proposition \ref{propirenee} states that the Berezian of any universal Lie $\infty$-algebroid of a solvable singular foliation $\mathcal{F}$ inherits a $\mathcal{F}$-module structure. 
We use this result  to define the modular class of the singular foliation $\mathcal{F}$.  

\begin{definition}\label{def:modclasssing}
Let $\mathcal{F}$ be a solvable singular foliation and let $E$ be a universal Lie $\infty$-algebroid of $\mathcal{F}$ of finite length. Let $\theta_{\mathfrak{U}}^{\mathcal{F}}$ be the cohomology class induced  by the modular class of $E$ in the universal foliated cohomology of $\mathcal{F}$.
Then the modular class of $\mathcal{F}$ is the unique element $\theta^{\mathcal{F}}\in H^1_{\mathrm{dR}}(\mathcal{F})$ defined by:
\begin{equation}
\theta^\mathcal{F}=(\rho_{\mathcal{F}}^*)^{-1}\theta_{\mathfrak{U}}^{\mathcal{F}}
\end{equation}
\end{definition}

The choice of cohomology -- the foliated de Rham cohomology instead of the universal foliated cohomology -- comes from the fact that we want the modular cocycle to be applied to elements of $\mathcal{F}$, rather than sections of any of its universal Lie $\infty$-algebroids.  Doing so, we mimick the definition of the modular class for regular foliations, which is defined on the foliated de Rham cohomology.
However, in practice, it is often more useful to find a universal Lie $\infty$-algebroid $E$ and to work with the modular class of $E$, which then appears to be a particular representant of~$\theta^\mathcal{F}$. Indeed,  Definition \ref{def:modclasssing} makes sense only if the modular class does not depend on the choice of  the universal Lie $\infty$-algebroid:

\begin{proposition}\label{propimposs}
The modular class of the solvable singular foliation $\mathcal{F}$ does not depend on the choice of universal Lie $\infty$-algebroid from which it is defined.
\end{proposition}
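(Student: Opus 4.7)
The plan is to assemble the proposition directly from machinery already in place: the unicity of universal Lie $\infty$-algebroids up to homotopy equivalence (Theorem \ref{theo:onlyOne}), the fact that homotopy equivalences intertwine modular classes (Proposition \ref{interestingprop}), the well-definedness of the universal foliated cohomology $H_{\mathfrak{U}}^\blacktriangle(\mathcal{F})$, and the isomorphism $\rho_{\mathcal{F}}^\ast$ in height $1$ (Proposition \ref{hooke}). Essentially, each ingredient in the definition of $\theta^{\mathcal{F}}$ has already been shown to be independent of choices; what remains is to thread them together.

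First I would take two universal Lie $\infty$-algebroids $E, E' \in \mathfrak{E}_{\mathcal{F}}$ of finite length. By Theorem \ref{theo:onlyOne}, there exists a Lie $\infty$-morphism $\Phi : \Omega_\blacktriangle(E') \to \Omega_\blacktriangle(E)$ over $M$, and by the unicity part of that theorem combined with the existence of a morphism in the opposite direction, $\Phi$ is a homotopy equivalence in the sense of Definition \ref{def:homtequiv}. Proposition \ref{interestingprop} then yields $\theta^{\Phi} = 0$, i.e.\ $\widetilde{\Phi}(\theta^{E'}) = \theta^E$ in $H^1(E)$. Because any two such homotopy equivalences are themselves homotopic, the induced map $\widetilde{\Phi}_{E,E'}$ is the canonical isomorphism used to define the equivalence relation $\sim$ on $\bigcup_{E\in\mathfrak{E}_\mathcal{F}} H^1(E)$.

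Consequently $\theta^{E}$ and $\theta^{E'}$ represent one and the same equivalence class in the universal foliated cohomology, providing a single well-defined element $\theta_{\mathfrak{U}}^{\mathcal{F}} \in H_{\mathfrak{U}}^1(\mathcal{F})$ which does not depend on the choice of universal Lie $\infty$-algebroid. Finally, by Proposition \ref{hooke} the map $\rho_{\mathcal{F}}^\ast : H_{\mathrm{dR}}^1(\mathcal{F}) \to H_{\mathfrak{U}}^1(\mathcal{F})$ is an isomorphism, so the class $\theta^{\mathcal{F}} = (\rho_{\mathcal{F}}^\ast)^{-1} \theta_{\mathfrak{U}}^{\mathcal{F}}$ is unambiguously defined.

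There is no serious obstacle here: the conceptual weight has been absorbed by the earlier results. The only point that deserves a moment of care is checking that, under the canonical identification $\widetilde{\Phi}_{E,E'}$ used to build $H_{\mathfrak{U}}(\mathcal{F})$, the equality $\widetilde{\Phi}_{E,E'}(\theta^{E'}) = \theta^E$ from Proposition \ref{interestingprop} is precisely the statement that $[\theta^E] = [\theta^{E'}]$ under the equivalence relation $\sim$ of Definition \ref{defunivseral}. Once this bookkeeping is made explicit the proof is complete.
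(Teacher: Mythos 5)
Your proof is correct and follows essentially the same route as the paper's: invoke Theorem \ref{theo:onlyOne} to get a homotopy equivalence between any two finite-length universal Lie $\infty$-algebroids, apply Proposition \ref{interestingprop} to see that the canonical isomorphism $\widetilde{\Phi}_{E,E'}$ intertwines the modular classes, conclude that they define one class $\theta_{\mathfrak{U}}^{\mathcal{F}}$ in $H^1_{\mathfrak{U}}(\mathcal{F})$, and pull back via the degree-$1$ isomorphism of Proposition \ref{hooke}. The bookkeeping point you flag at the end is exactly the step the paper treats as immediate from the definition of the equivalence relation $\sim$, so nothing is missing.
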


\begin{proof}
Assume that $E$ (resp. $E'$) is a universal Lie $\infty$-algebroid of $\mathcal{F}$ of finite length, and let $\theta^E$ (resp. $\theta^{E'}$) be its modular class (see Definition \ref{defmodular}).
We know by Theorem \ref{theo:onlyOne} that  $E$ and $E'$ are homotopically equivalent, and that any two choices of homotopy equivalences are homotopic. Let us pick one, say $\Phi_{E,E'}\colon\Omega(E')\to\Omega(E)$, then by Proposition \ref{interestingprop} it has vanishing modular class. This implies that, at the level of cohomologies, the canonical isomorphism $\widetilde{\Phi}_{E,E'}$ induced by the homotopy equivalence $\Phi_{E,E'}$ intertwines the respective modular classes of $E$ and $E'$. 
By definition of the universal foliated cohomology, 
these modular classes define the same element in~$H^1_{\mathfrak{U}}(\mathcal{F})$, that we note $\theta_{\mathfrak{U}}^{\mathcal{F}}$. Pulling back this cohomology class in the de Rham foliated cohomology using Proposition \ref{hooke} gives the modular class associated to $\mathcal{F}$.
\end{proof}

By construction, the vanishing or non-vanishing of the modular class of a singular foliation has a straightforward meaning:

\begin{proposition}\label{plop} Let $\mathcal{F}$ be a solvable singular foliation. Then
 $\theta^{\mathcal{F}}=0$ if and only if, for \emph{any} universal Lie $\infty$-algebroid of finite length $E$ of $\mathcal{F}$, the Berezinian $\mathrm{Ber}(E)$ is a trivial $\mathcal{F}$-module.
\end{proposition}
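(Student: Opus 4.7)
The plan is to chain together Definition \ref{def:modclasssing}, Proposition \ref{hooke}, Proposition \ref{propimposs}, Proposition \ref{propiteratif}, and the bridge between $E_0$-invariance and $\mathcal{F}$-invariance of sections of $\mathrm{Ber}(E)$ provided by Equation \eqref{eqsmash}. First, I would use the injectivity of $\rho_{\mathcal{F}}^*$ (Proposition \ref{hooke}) to rephrase $\theta^{\mathcal{F}} = 0$ as the vanishing of the cohomology class $\theta_{\mathfrak{U}}^{\mathcal{F}}$. By Proposition \ref{propimposs}, which rests on the fact that a homotopy equivalence between finite-length Lie $\infty$-algebroids has vanishing modular class (Proposition \ref{interestingprop}), $\theta_{\mathfrak{U}}^{\mathcal{F}}$ vanishes if and only if $\theta^E = 0$ for any universal Lie $\infty$-algebroid of finite length $E$ of $\mathcal{F}$ — the canonical cohomology isomorphisms intertwining the various representatives. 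This reduces the statement to the equivalence $\theta^E = 0$ iff $\mathrm{Ber}(E)$ is a trivial $\mathcal{F}$-module for any fixed $E \in \mathfrak{E}_{\mathcal{F}}$ of finite length.

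The key ingredient for this reduced equivalence is Equation \eqref{eqsmash}, which gives $\nabla^{\mathcal{F}}_{\rho(a)}(\Omega) = \mathcal{L}_a(\Omega)$ for any section $\Omega$ of $\mathrm{Ber}(E)$ and any $a \in \Gamma(E_0)$. Since the anchor surjects onto $\mathcal{F}$, a nowhere vanishing section $\Omega$ is $\mathcal{F}$-invariant if and only if it is $E_0$-invariant. If $\mathrm{Ber}(E)$ is a trivial $\mathcal{F}$-module, such an $\Omega$ trivializes $\mathrm{Ber}(E)$ as an $E_0$-module (and in particular as a line bundle), so Proposition \ref{propiteratif} delivers $\theta^E = 0$. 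Conversely, if $\theta^E = 0$ and $\mathrm{Ber}(E)$ is trivializable as a line bundle, Proposition \ref{propiteratif} produces a nowhere vanishing $E_0$-invariant section $\Omega$, which the identity above immediately upgrades to an $\mathcal{F}$-invariant section, trivializing $\mathrm{Ber}(E)$ as an $\mathcal{F}$-module in the sense of Definition \ref{defkitani}.

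The main obstacle will be treating the case where $\mathrm{Ber}(E)$ is not trivializable as a line bundle. Here $\theta^E$ is accessed only through $\tfrac{1}{2}\,\theta_{\mathrm{Ber}(E)^2}$, and a nowhere vanishing invariant section of $\mathrm{Ber}(E)^2$ need not descend to $\mathrm{Ber}(E)$. However, a non-trivializable line bundle cannot be a trivial $\mathcal{F}$-module in the strict sense of Definition \ref{defkitani}, since the existence of a global frame forces trivializability; so in that direction the right-hand side of the biconditional automatically fails, and one only needs to exclude $\theta^E = 0$ in this situation. This is handled either by working at the level of the always-trivializable density bundle $|\mathrm{Ber}(E)|$, where the $E_0$-invariant positive section extracted from $\mathrm{Ber}(E)^2$ lives, or by invoking a transverse orientability assumption paralleling Definition \ref{defmodular2} for the regular case.
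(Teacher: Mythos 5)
The paper offers no written proof of Proposition \ref{plop} --- it is presented as following ``by construction'' from Definition \ref{def:modclasssing}, Propositions \ref{hooke}, \ref{propimposs} and \ref{propiteratif}, and the $\mathcal{F}$-module structure \eqref{eqsmash} on $\mathrm{Ber}(E)$ --- and your chain of reductions is exactly that intended argument, correctly carried out: the one point of substance is that surjectivity of the anchor makes $E_0$-invariance and $\mathcal{F}$-invariance of a nowhere vanishing section of $\mathrm{Ber}(E)$ equivalent, and you get it right. Your closing worry about a non-trivializable $\mathrm{Ber}(E)$ exposes an imprecision of the paper rather than a defect of your proof: the statement implicitly assumes the Berezinian is trivializable as a line bundle (paralleling the transverse-orientability hypothesis of Definition \ref{defmodular2}), since otherwise $\theta^E=\tfrac{1}{2}\theta_{\mathrm{Ber}(E)^2}$ can vanish --- an invariant section of the square need not descend --- while a non-trivializable line bundle can never be a trivial $\mathcal{F}$-module, so one of the two remedies you sketch (adding an orientability hypothesis, or restating the result for the density bundle) is indeed needed for the biconditional to be literally true.
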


\begin{remarque}
The main motivation behind Definition \ref{def:modclasssing} is the following observation: defining the modular class of a singular foliation from that of a Lie algebroid from which it descends has several drawbacks. First, assume that one finds several different, non-isomorphic, Lie algebroids covering the same singular foliation: then we do not have a unique way of defining a modular class for the latter. This problem is solved in the Lie $\infty$-algebroid realm by Proposition \ref{propimposs}. Moreover, even for Lie algebroids for which the anchor has constant rank on an open dense subset, the modular class of the Lie algebroid 
 may not coincide with the modular class of the induced regular foliation (see Examples \ref{exaeuler} and \ref{charaa}). This is somewhat problematic.
  Even worse, there are examples of singular foliations which we do not know if they descend from a Lie algebroid or not (this is still a conjecture \cite{androulidakisSmoothnessHolonomyCovers2013}), but of which we know some universal Lie $\infty$-algebroids (see Example \ref{quadra}). 
Thus, relying on Lie algebroids raises many problems and justifies that we use the notion of universal Lie $\infty$-algebroids to define modular class of singular foliations. 
\end{remarque}

We will now understand the relationship between the modular class of singular foliations and that of regular foliations.
Recall that the regular leaves of a solvable singular foliation have the same, maximal dimension (see Proposition 2.5 in \cite{laurent-gengouxUniversalLieInfinityAlgebroid2020}) on a dense open set $U\subset M$, hence forming a regular foliation $F$. 
From Example \ref{ex4} we know that the conormal bundle $F^\circ$ may, however, not be extendable to the singular leaves $M\backslash U$. If it is extendable, we call $\mathcal{F}^\circ$ the vector bundle on $M$ whose restriction to the open dense subset $U$ is $F^\circ$. The action of $\mathcal{F}\big|_U$ on $F^\circ$ extends at the singularities as an action of $\mathcal{F}$ on $\mathcal{F}^\circ$.
 By construction, the line bundle $\wedge^{\mathrm{top}}\mathcal{F}^\circ$ restricts to $\wedge^{\mathrm{top}}F^\circ$ on $U$. Moreover, it inherits a $\mathcal{F}$-module structure from that of $\mathcal{F}^\circ$, which extends that of $\mathcal{F}\big|_U$ on $\wedge^{\mathrm{top}}F^\circ$.
 By a similar argument as in item 1. of Proposition \ref{propirenee}, the line bundle $\wedge^{\mathrm{top}}\mathcal{F}^\circ$ is isomorphic, as a vector bundle, to the Berezinian of \emph{any} universal Lie $\infty$-algebroid of finite length $E$ of $\mathcal{F}$. While item 2. of Proposition \ref{propirenee} states that the restriction on $U$ of these two line bundles are isomorphic as $\mathcal{F}\big|_U$-modules, \emph{it does not mean} that over $M$ they are isomorphic as $\mathcal{F}$-modules, precisely because the action of $\mathcal{F}\big|_U$ coincides with that of the regular foliation $F$ and forgets everything about the behavior of the vector fields at the singularities. To better understand the situation, let us explore the following example:
 
\begin{example}\label{humok}

Let $M=\mathbb{R}^n$ with standard coordinates $x_1,\ldots, x_n$. Let $f$ be a smooth function on $\mathbb{R}$ vanishing only at $0$ and let $g$ be a smooth function on $\mathbb{R}^{n-1}$ vanishing only at the origin. We use them to define two vector fields:
\begin{equation*}
X_1=g(x_2,\ldots, x_n)\partial_{x_1}\qquad \text{and} \qquad X_2=f(x_1)\partial_{x_1}
\end{equation*}
Let $\mathcal{F}_i$ be the singular foliation generated by the vector field $X_i$ ($i=1,2$). The regular foliation $F_1$ induced by $X_1$ is defined over the open dense subset  $U_1=\mathbb{R}^n-\{(x_1,\ldots, x_n)\,|\,x_2=\ldots=x_n=0\}$  while the regular foliation $F_2$ induced by $X_2$ is defined over $U_2=\mathbb{R}^n-\{(x_1,\ldots, x_n)\,|\,x_1=0\}$. Both regular leaves consist of parallel lines along the $x_1$-direction, but in the first case the singular leaves are the points forming the $x_1$-axis, while in the second case  the singular leaves consist of points on the $x_1=0$ plane

The annihilator bundle $F^\circ_i$ is the rank $n-1$ subbundle of $T^*\mathbb{R}^n\big|_{U_i}$spanned by the one-forms $dx_2,dx_3,\ldots, dx_n$, and the determinant line bundle $\wedge^{n-1}F^\circ_i$  is the rank one subbundle of $\wedge^{n-1}T^*\mathbb{R}^n\big|_{U_i}$  generated by the $n-1$-form $dx_2\wedge\ldots\wedge dx_n$. 
The vector bundles $F^\circ_1$ and $\wedge^{n-1}F^\circ_1$ (resp. $F_2$ and $\wedge^{n-1}F^\circ_2$) straightforwardly extend along the $x_1$-axis  (resp. at the plane $x_1=0$), and so does the section $dx_2\wedge\ldots\wedge dx_n$. We denote by $L$ the rank one trivial subbundle  of $\wedge^{n-1} T^*\mathbb{R}^n$ generated by the global section $dx_2\wedge\ldots\wedge dx_n$ (over the whole space $\mathbb{R}^n$ then).
The vector fields $X_1$ and $X_2$ act on $L$ via the Bott connection (see Equation \eqref{botte}) and their action on the global section $dx_2\wedge\ldots\wedge dx_n$ is trivial. Thus,  $L$ is both a trivial $\mathcal{F}_1$ and a trivial $\mathcal{F}_2$ line bundle. However, we will now see that these singular foliations have different modular classes.

A universal Lie $\infty$-algebroid of $\mathcal{F}_i$ can be chosen to be the rank one trivial Lie algebroid $A=\mathbb{R}^n\times \mathbb{R}$ such that the anchor map $\rho_i$ sends the constant section $\textbf{1}$ to $X_i$. By construction it is injective on sections so there is no additional term in the resolution.
The Berezinian of $A$ is $\mathrm{Ber}(A)=\wedge^nT^*\mathbb{R}^n\otimes A$, and admits as global section the following form: $\Omega=dx_1\wedge \ldots\wedge dx_n\otimes\textbf{1}$. The action of the constant section $\textbf{1}\in\Gamma(A)$ on  $\Omega$ then gives:
\begin{equation}\label{eq1}
\nabla_\textbf{1}(\Omega)=\mathcal{L}_{X_i}(dx_1\wedge \ldots\wedge dx_n)\otimes \textbf{1}=\mathrm{div}(X_i)\,\Omega
\end{equation}
In the first case, $\mathrm{div}(X_1)=0$, while in the second case, $\mathrm{div}(X_2)=f'(x_2)$. So, in the first case, the modular class $\theta^{\mathcal{F}_1}=0$. However, for the second case, the modular 1-form $\theta_\Omega$ from Equation~\eqref{eq1} is $\theta_\Omega(\textbf{1})=f'(x_1)$. Although it is not zero as a cochain element, it can be written as a $d_A$-exact one-form on the open dense subset $U_2$:
\begin{equation}
\theta_\Omega\big|_{U_2}=\frac{1}{2}d_A\mathrm{ln}(f^2)
\end{equation}

Then, $\theta_\Omega$ is \emph{not} exact on the whole of $\mathbb{R}^n$ (or equivalently, on any neighborhood of the hyperplane $x_1=0$) because the function $x_1\mapsto \frac{1}{2}\mathrm{ln}\big(f(x_1)^2\big)$ diverges at $x_1=0$. It implies in turn that the modular class $\theta^{\mathcal{F}_2}$ is \emph{not} zero (at least when $f'(0)\neq0$, see Example \ref{exaeuler}).
To conclude: $\mathrm{Ber}(A)$ is a trivial $\mathcal{F}_1$-module, but not a trivial  $\mathcal{F}_2$-module, while in both cases $L$ is a trivial module.
We interpret this observation as follows: as line bundles, $L$ and  $\mathrm{Ber}(A)$ are isomorphic. However, as $\mathcal{F}_1$-modules they are isomorphic, while as $\mathcal{F}_2$-modules they are not. This statement extends to the Berezinian of any universal Lie $\infty$-algebroid of finite length of the singular foliations $\mathcal{F}_i$.
\end{example}

From Example \ref{humok}, we understand that, when the regular foliation $F$ induced from the singular foliation $\mathcal{F}$ has a vanishing modular class (as a regular foliation, on the open dense subset of $M$ made of regular leaves), the modular class of the singular foliation $\mathcal{F}$ tells us to what extent item 2. of Proposition \ref{propirenee} extends to the singular leaves. More precisely,  
assume that the modular class of this regular foliation is zero on $U$, and that 
the vector bundle $F^\circ$ extends at singularities. We denote by $\mathcal{F}^\circ$ this vector bundle and by $\wedge^{\mathrm{top}}\mathcal{F}^\circ$ the unique line bundle which restricts to the determinant line bundle $\wedge^{\mathrm{top}}F^\circ$ on $U$. Then, drawing on Proposition \ref{plop}, we have the following statement:

\begin{proposition}\label{prop:ok} Let $\mathcal{F}$ be a solvable singular foliation. Assume that the regular foliation $F$ induced by $\mathcal{F}$ admits a global invariant transverse measure, and that $F^\circ$ extends at singularities. Then, 
$\theta^\mathcal{F}=0$ if and only if there exists  an isomorphism of $\mathcal{F}$-modules  between $\wedge^{\mathrm{top}}\mathcal{F}^\circ$ and the Berezinian of \emph{any} universal Lie $\infty$-algebroid of finite length of $\mathcal{F}$. 
\end{proposition}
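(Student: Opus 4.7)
My plan is to reduce the proposition to Proposition \ref{plop} by characterising ``trivial $\mathcal{F}$-module'' through the existence of a nowhere vanishing $\mathcal{F}$-invariant global section, and then matching such sections on both sides. The hypothesis that $F$ admits a global invariant transverse measure, combined with the fact that $F^\circ$ extends at singularities, furnishes a nowhere vanishing $\mathcal{F}$-invariant global section $\sigma$ of $\wedge^{\mathrm{top}}\mathcal{F}^\circ$; indeed the Bott formula $\nabla^{\mathrm{Bott}}_X(\xi) = \iota_X d\xi$ extends from $U$ to an $\mathcal{F}$-module structure on $\mathcal{F}^\circ$ over $M$, because for $X \in \mathcal{F}$ and $\xi \in \Gamma(\mathcal{F}^\circ)$ the one-form $\iota_X d\xi$ lies in $\Gamma(\mathcal{F}^\circ)$ on the dense subset $U$, hence on all of $M$ by continuity. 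Thus $\wedge^{\mathrm{top}}\mathcal{F}^\circ$ is itself a trivial $\mathcal{F}$-module.

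For the implication ``$\Leftarrow$'', if an $\mathcal{F}$-module isomorphism $\varphi\colon \mathrm{Ber}(E) \to \wedge^{\mathrm{top}}\mathcal{F}^\circ$ exists, then $\varphi^{-1}(\sigma)$ is a nowhere vanishing $\mathcal{F}$-invariant section of $\mathrm{Ber}(E)$, which is therefore a trivial $\mathcal{F}$-module; by Proposition \ref{plop} this forces $\theta^\mathcal{F} = 0$. Conversely, for ``$\Rightarrow$'', Proposition \ref{plop} applied to the assumption $\theta^\mathcal{F}=0$ tells us that for every universal Lie $\infty$-algebroid $E$ of finite length, $\mathrm{Ber}(E)$ admits a nowhere vanishing $\mathcal{F}$-invariant global section $\Omega$. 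I would then define $\varphi\colon \mathrm{Ber}(E) \to \wedge^{\mathrm{top}}\mathcal{F}^\circ$ by $\varphi(f\Omega) = f\sigma$ for every $f \in \mathcal{C}^\infty(M)$, which is manifestly an isomorphism of line bundles.

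To conclude I would verify that $\varphi$ intertwines the two $\mathcal{F}$-module structures. For every $X \in \mathcal{F}$ and $f \in \mathcal{C}^\infty(M)$, combining the Leibniz rule of Definition \ref{defkitani} with $\nabla_X^{\mathrm{Ber}(E)}(\Omega) = 0$ and $\nabla_X^{\wedge^{\mathrm{top}}\mathcal{F}^\circ}(\sigma) = 0$ gives
\[ \varphi\big(\nabla^{\mathrm{Ber}(E)}_X(f\Omega)\big) \;=\; \varphi\big(X(f)\,\Omega\big) \;=\; X(f)\,\sigma \;=\; \nabla^{\wedge^{\mathrm{top}}\mathcal{F}^\circ}_X\big(\varphi(f\Omega)\big), \]
establishing that $\varphi$ is an $\mathcal{F}$-module morphism. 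I do not expect a substantial obstacle: once both line bundles are identified as trivial rank-one $\mathcal{F}$-modules via distinguished invariant sections, the argument is a standard transport of trivialisation. The only conceptual subtlety is to justify that the $\mathcal{F}$-module structure on $\wedge^{\mathrm{top}}\mathcal{F}^\circ$ is well-defined and smooth across the singular locus, which is precisely the role of the assumption that $F^\circ$ extends at singularities.
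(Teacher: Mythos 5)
Your proof is correct and follows the route the paper intends: Proposition \ref{prop:ok} is stated there without a separate proof, as a direct consequence of Proposition \ref{plop} once both line bundles are recognized as trivial rank-one $\mathcal{F}$-modules and their trivializing invariant sections are matched, which is exactly your reduction. The one assertion you state rather than argue --- that the invariant transverse measure, a priori given only on the regular part $U$, yields a nowhere vanishing $\mathcal{F}$-invariant global section of $\wedge^{\mathrm{top}}\mathcal{F}^\circ$ over all of $M$ --- is read off from the hypotheses in the same implicit way by the paper, so your proposal matches it in both substance and level of detail.
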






 We assumed that the modular class of the regular foliation $F$ on $U$ vanishes, because otherwise that of the singular foliation $\mathcal{F}$ will not vanish either, not telling us anything that is not already captured by the modular class  of $F$.
For the same reason, if the vector bundle $F^\circ$ cannot extend at singular leaves, the modular class of the singular foliation $\mathcal{F}$ still does exist, but does not say anything on $\mathcal{F}$ that is not already known about $F$. On the contrary, if those two conditions are met, Proposition \ref{prop:ok} establishes that the modular class of the singular foliation $\mathcal{F}$ is an obstruction to the existence of an invariant global section of a particular family of line bundles -- the Berezinians of the universal Lie $\infty$-algebroids of finite length -- which, on the regular leaves, can be interpreted as the existence of an invariant transverse measure to $F$.

 The use of universal Lie $\infty$-algebroids to define modular classes of singular foliations illustrates how to extend other characteristic classes or geometric notions from the regular foliations realm to the singular foliations one. For prospective researches, it would be important to investigate the behaviour of the modular class (or other characteristic classes) under Hausdorff Morita equivalences of singular foliations \cite{garmendiaHausdorffMoritaEquivalence2019}. Also, based on the idea that the modular class of a Lie algebroid can be seen as an obstruction to the existence of a measure on the differentiable stack associated to this Lie algebroid \cite{weinsteinVolumeDifferentiableStack2009, crainicMeasuresDifferentiableStacks2020}, it would be certainly fruitful to investigate the relationship between the modular class of a singular foliation and the existence of an invariant measure on its associated holonomy groupoid \cite{androulidakisHolonomyGroupoidSingular2009}.

\section{Examples}\label{exxxon}

The first example \ref{exaeuler} and the second example \ref{charaa} are raw calculations of modular classes of two solvable singular foliations: the Euler vector field and the characteristic foliation of a Poisson manifold. In both cases, the annihilator bundle $F^\circ$ of the induced regular foliation $F$ \emph{cannot} be extended at the singular leaves. The modular classes of these singular foliations then do not bring any new knowledge that is not already captured by the modular class of the induced regular foliations. Examples \ref{quadra} and \ref{exempleglau}  study solvable singular foliations whose conormal bundles can be extended at the origin. In both cases the modular class vanishes, proving that the line bundle $\wedge^{\mathrm{top}}F^\circ$ is a trivial module of the singular foliation.
Example \ref{exampleson} presents a case of a  solvable singular foliation whose conormal bundle \emph{cannot} be extended at the origin, but if we add the Euler vector field, it can. However, even if the action of the newly defined singular foliation on the determinant line bundle $\wedge^{\mathrm{top}}F^\circ$ is trivial, we show that its modular class is not.  This case should then be interpreted under the light of Proposition \ref{prop:ok}.

\subsection{The Euler vector field} \label{exaeuler} 

On $\mathbb{R}^n$, for $n\geq2$, the Euler vector field $\epsilon=\sum_{i=1}^nx_i\partial_{x_i}$ defines a singular foliation $\mathcal{F}_\epsilon$ whose leaves are straight lines (regular leaves) escaping the origin (singular leaf).  The punctured space $\mathbb{R}^n\backslash\{0\}$ is the union of regular leaves, whose transversals consist of the $(n-1)$-spheres centered at $0$ (and of any non-zero radius). Let us denote by $F$ the regular distribution induced by $\mathcal{F}_\epsilon$ on the punctured space $\mathbb{R}^n\backslash\{0\}$. 
Sections of the annihilator vector bundle $F^\circ\subset T^*(\mathbb{R}^n\backslash\{0\})$ are generated by elements of the form $x_idx_j-x_jdx_i$ for $1\leq i<j\leq n$. The codistribution they generate consists of concentric spheres, so the annihilator bundle does not extend at the origin.

 A distinguished global section of the line bundle $\wedge^{\mathrm{top}}F^\circ$ on the punctured space is the $n-1$-dimensional spherical volume form:
\begin{equation}
\omega_{F^\circ}=\sum_{i=1}^n(-1)^{i-1}x_idx_1\wedge\ldots\wedge dx_{i-1}\wedge dx_{i+1}\wedge\ldots\wedge dx_n ,
\end{equation}
showing that $\wedge^{\mathrm{top}}F^\circ$ is a trivializable line bundle over the punctured space. The action of the regular foliation $F$ on $\omega_{F^\circ}$ is made through the Bott connection defined as in Equation \eqref{definitionbottdual}. Since the restriction of  $\epsilon$ to the punctured space generates all the sections of $F$, it is sufficient to compute the action of the Bott connection on $\epsilon$, so we obtain:
\begin{equation}\label{eqezio}
\nabla^{\mathrm{Bott}}_\epsilon(\omega_{F^\circ})=n\,\omega_{F^\circ}
\end{equation}

One can straightforwardly  read the modular 1-form $\theta_{\omega_{F^\circ}}\in\Omega^1(F)$ from this equation; evaluated on the Euler vector field it reads: $\theta_{\omega_{F^\circ}}(\epsilon)=n$. Although the modular 1-form $\theta_{\omega_{F^\circ}}$ is not zero as a cochain element of $\Omega^1(F)$, it is exact since it can be written as: 
\begin{equation}\label{super}
\theta_{\omega_{F^\circ}}=n\,d_{F}\left(\mathrm{ln}\sqrt{x_1^2+\ldots+x_n^2}\right)
\end{equation}
It is well defined because the sum in the square root never vanishes on the punctured space $\mathbb{R}^n\backslash\{0\}$.
Then, the modular class of the regular foliation $F$ induced by the Euler vector field vanishes. 
It means that there exists a transverse measure invariant under the action of the Euler vector field, which can be chosen to be: 
\begin{equation}\label{super2}
\omega_{\mathrm{inv}}=\frac{1}{\sqrt{x_1^2+\cdots +x_n^2}}\omega_{F^\circ}
\end{equation}
 One can check that the action of the Bott connection is trivial on $\omega_{\mathrm{inv}}$, showing that $\wedge^{\mathrm{top}}F^\circ$ is a trivial $F$-module.

Now let us turn to the modular class of the singular foliation $\mathcal{F}_\epsilon$.
A universal Lie $\infty$-algebroid of $\mathcal{F}_\epsilon$ is the rank 1 trivial vector bundle $E=\mathbb{R}^n\times \mathbb{R}$, whose anchor map $\rho$ sends the constant section $\textbf{1}$ to the Euler vector field $\epsilon$.   The Berezinian $\mathrm{Ber}(E)$ is a trivial vector bundle and by Proposition \ref{propirenee}, its restriction to the  punctured space can be canonically identified with $\wedge^{\mathrm{top}}F^\circ$. The latter line bundle does not extend to the origin however, so the identification cannot be extended there.
 A global non-vanishing section of the Berezinian is $\Omega=\omega\otimes \textbf{1}$, where $\omega=dx_1\wedge\ldots dx_n$ is the standard volume form on $\mathbb{R}^n$. Since the Lie algebroid bracket on $E$ is zero, the action of $E$ on $\Omega$ is only given by the Lie derivative of the Euler vector field on $\omega$:
\begin{equation}\label{eqauditore}
\nabla_{\textbf{1}}(\Omega)=\mathcal{L}_{\epsilon}(\omega)\otimes \textbf{1}=\mathrm{div}(\epsilon)\,\Omega=n\,\Omega
\end{equation}

One obtains an equation similar to Equation \eqref{eqezio}. Then we know that the modular 1-form $\theta_\Omega\in\Omega^1(E)$ satisfying $\theta_\Omega(\textbf{1})=n$ is exact when restricted to the punctured space, and
\begin{equation}
\theta_\Omega\big|_{\mathbb{R}^n\backslash\{0\}}= \theta_{\omega_{F^\circ}}
\end{equation}
However, as seen from Equation \eqref{super}, the right-hand side cannot be extended at the origin. One can show that in fact, there is no smooth function $g$ such that $\theta_\Omega=d_Eg$ on any neighborhood of the origin. Indeed, if it were the case, by Equation \eqref{ancre} we would have:
\begin{equation}\label{espresso}\theta_\Omega(\textbf{1})=d_Eg(\textbf{1})=\epsilon(g)=\sum_{i=1}^nx_i\partial_ig\end{equation} Since the derivative of $g$ is continuous at the origin it is bounded and then the right-hand side of Equation \eqref{espresso} vanishes. However, by Equation \eqref{eqauditore}, the left-hand side of Equation \eqref{espresso} should be equal to $n$  everywhere, in particular at the origin. This is a contradiction.

Then, the modular 1-form $\theta_\Omega$ is not an exact 1-form on any neighborhood of the origin, and so we conclude that the modular class of the universal Lie $\infty$-algebroid $E$ is not zero. This implies in turn that the modular class of the singular foliation $\mathcal{F}_\epsilon$ is not zero. This not only means that the Berezinian of $E$ is \emph{not} a trivial $\mathcal{F}_\epsilon$-module, but that this statement holds for the Berezinian of \emph{any} universal Lie $\infty$-algebroid of $\mathcal{F}_\epsilon$. This result is clearly related to the behavior of the Euler vector field at the origin. Since in any case the conormal bundle $F^\circ$ could not be extended at the origin, the non-vanishing modular class of $\mathcal{F}_\epsilon$ does not tell us anything more about the regular foliation $F$ that we already knew from its own modular class. 

\subsection{The characteristic foliation of a degenerate Poisson manifold}\label{charaa}

Poisson manifolds form a nice class of foliated manifolds. Indeed, for any Poisson manifold $M$ with Poisson bivector $\pi\in\mathfrak{X}^2(M)$, the cotangent bundle $T^*M$ is a Lie algebroid -- called the \emph{cotangent Lie algebroid} -- with anchor the Poisson bivector $\pi^{\musSharp}:T^*M\longrightarrow TM$, and bracket the following:
\begin{equation}
[\alpha,\beta]_{\pi}=\mathcal{L}_{\pi^{\musSharp}(\alpha)}(\beta)-\mathcal{L}_{\pi^{\musSharp}(\beta)}(\alpha)-d\big(\pi(\alpha,\beta)\big)
\end{equation}
for any two differential 1-forms $\alpha,\beta\in\Omega^1(M)$. Thus, the image of $\Omega^1(M)$ through the anchor map defines a singular foliation $\mathcal{F}_\pi$, called the \emph{characteristic foliation} of the Poisson manifold. In particular, a property of such a foliation is that its leaves are symplectic manifolds, hence necessarily even dimensional.

For the present example -- due to Vladimir Rubstov --  let us pick up $M=\mathbb{R}^3$ and the following Poisson bivector:
\begin{equation}
\pi=(x\partial_x+y\partial_y)\wedge \partial_z
\end{equation}
The cotangent bundle $T^*\mathbb{R}^3$ is trivial and admit three canonical generators, the constant sections $dx,dy,dz$. The Lie algebroid structure on $E_0=T^*\mathbb{R}^3$ is such that the anchor map reads:
\begin{equation}\label{bracos1}
X_x=\pi^{\musSharp}(dx)=x\partial_z,\quad X_y=\pi^{\musSharp}(dy)=y\partial_z\quad \text{and}\quad X_z=\pi^{\musSharp}(dz)=-x\partial_x-y\partial_y
\end{equation}
while the Lie bracket reads, because of the identity $[df,dg]_\pi=d\{f,g\}$:
\begin{equation}\label{bracos2}
[dx,dy]_\pi=0, \quad [dx,dz]_\pi=dx\quad \text{and}\quad   [dy,dz]_\pi=dy
\end{equation}
The characteristic foliation $\mathcal{F}_\pi$ induced by the three vector fields $X_x, X_y,X_z$ consists of the union of 2-dimensional half-planes escaping radially from the vertical axis $(x=0,y=0,z)$, itself constituted of the singular leaves: points.

The union of regular leaves forms an open dense subset $U=\mathbb{R}^3\backslash\{z-\text{axis}\}$, on which $\mathcal{F}_\pi$ induces a regular foliation $F$. The conormal bundle is  the rank one subbundle $F^\circ$ of $T^*\mathbb{R}^3$ generated by the differential form $ydx-xdy$, so it does not extend at the singular leaves. Another way of seeing this is to notice that the transversals to the regular leaves consist of concentric circles around the vertical axis. The determinant line bundle $\wedge^{\mathrm{top}}F^\circ$, being of rank one, coincides with $F^\circ$, so a transversal volume form to the regular leaves is given by:
\begin{equation}
\omega_{F^\circ}=ydx-xdy
\end{equation}
The action of $X_x, X_y$ and $X_z$ on this volume form is computed via the Lie derivative as in Equation \eqref{definitionbottdual}. One can check that only $X_z$ has a non trivial action on $\omega_{F^\circ}$ since
\begin{equation}
\nabla^{\mathrm{Bott}}_{X_z}(\omega_{F^\circ})=-2\,\omega_{F^\circ}\end{equation}

The modular one-form $\theta_{\omega_{F^\circ}}$ can be read directly from this equation:
\begin{equation}
\theta_{\omega_{F^\circ}}(X_x)=\theta_{\omega_{F^\circ}}(X_y)=0\quad \text{and}\quad  \theta_{\omega_{F^\circ}}(X_z)=-2
\end{equation}
It turns out to be exact on $U$, because one can write:
\begin{equation}
\theta_{\omega_{F^\circ}}=d_F\left(\mathrm{ln}(x^2+y^2)\right)
\end{equation}
From this we deduce that the modular class of the regular foliation $F$ is zero. Indeed, the latter admits an invariant transverse measure to the regular leaves given by:
\begin{equation}
\omega_{\mathrm{inv}}=\frac{1}{x^2+y^2}(ydx-xdy) 
\end{equation}
Notice that it does not extend to the vertical axis because the function at the denominator diverges. 

Now let us turn to the modular class of the singular foliation $\mathcal{F}_\pi$.
The kernel of the anchor map $\pi^{\musSharp}$ is generated by the differential one-form $ydx-xdy$.
A universal Lie $\infty$-algebroid of the characteristic foliation $\mathcal{F}_\pi$ can then be built on the following geometric resolution:
\begin{center}
\begin{tikzcd}[column sep=0.7cm,row sep=0.4cm]
0\ar[r]&\Gamma(\mathbb{R}^3\times \mathbb{R})\ar[r,"{\delta}"]&\Omega^1(\mathbb{R}^3)\ar[r,"\pi^{\musSharp}"]&\mathcal{F}_\pi
\end{tikzcd}
\end{center}
where the vector bundle morphism $\delta$ is such that it sends the constant section $\textbf{1}$ of the trivial line bundle $E_{-1}=\mathbb{R}^3\times \mathbb{R}$ to the one-form $ydx-xdy$. Then, we indeed have $\mathrm{Im}(\delta)=\mathrm{Ker}(\pi^{\musSharp})$. The 2-bracket $l_2$ on $E_0$ coincides with the Lie algebroid bracket $[\,.\,,.\,]_\pi$. The 2-bracket $l_2$ between a section of $E_0=T^*\mathbb{R}^3$ and a section of $E_{-1}$ should satisfy Equation \eqref{leibnizzzz}, for $l_1=\delta$. So, we deduce that:
\begin{equation}\label{actionEE}
l_2(dx,\textbf{1})=0,\quad l_2(dy,\textbf{1})=0\quad \text{and}\quad l_2(dz,\textbf{1})=-2\,\textbf{1}
\end{equation}
The Berezinian of $E$ is given by:
\begin{equation}\mathrm{Ber}(E)=\wedge^3T^*\mathbb{R}^3\otimes \wedge^3T^*\mathbb{R}^3\otimes E_{-1}^*\end{equation}
where the second slot corresponds to $\wedge^3 E_0$.
The Berezinian admits a global section given by $\Omega=dx\wedge dy\wedge dz\otimes dx\wedge dy\wedge dz\otimes \textbf{1}^*$. The constant section $\textbf{1}^*\in \Gamma(E_{-1}^*)$ is the section dual to $\textbf{1}$.

We will compute the modular class of $E$ by using Formula \eqref{gluttony4}, i.e. by computing the action of $E_0$ on each of the three terms of the tensor product in $\mathrm{Ber}(E)$. Having Formulas~\eqref{bracos1}, \eqref{bracos2} and \eqref{actionEE} at hands, one straightforwardly notices that  the only a priori  non-trivial action on $\mathrm{Ber}(E)$ is that of $dz$.
To compute the explicit action of the element $dz$ on $E^*_{-1}$, we will in fact compute it on $E_{-1}$ and revert the sign, as is explained in Equations~\eqref{bracos3} and~\eqref{bracos4}.
The action of $dz$ on the first term $\wedge^3T^*\mathbb{R}^3$ is given by the divergence of $X_z$, which is:
\begin{equation}
\mathrm{div}(X_z)=-2
\end{equation}
The action of $dz$ on $\wedge^3E_0$ is given via the Lie algebroid bracket \eqref{bracos2} and gives another $-2$ contribution.
The action of $dz$ on the constant section $\textbf{1}$ of $E_{-1}$ gives $-2\,\textbf{1}$. Taking the opposite sign, we deduce that  the section $dz$ acts on $E_{-1}^*$ as a multiplication by $+2$. Summing all the numbers, we obtain that the modular one-form $\theta_\Omega\in\Omega^1(E)$ satisfies:
\begin{equation}
\theta_\Omega(X_x)=\theta_\Omega(X_y)=0\quad \text{and}\quad  \theta_\Omega(X_z)=-2
\end{equation}
Following the same lines of argument as in the second part of  Example \ref{exaeuler}, we deduce that the modular one-form is exact on $U$, but not on any neighborhood of the singular leaves, so not on $\mathbb{R}^3$ as a whole. It means that the modular class of the singular foliation of $\mathcal{F}_\pi$ is not zero, i.e. that the Berezinian of \emph{any} universal Lie $\infty$-algebroid is \emph{not} a trivial $\mathcal{F}_\pi$-module. 

\subsection{Vector fields that quadratically vanish at 0}\label{quadra}
This example illustrates that a solvable singular foliation of which we do not know if it descends from a Lie algebroid, can nonetheless be attributed a modular class because we know a universal Lie $\infty$-algebroid associated to it. Let $\mathcal{F}$ be the sheaf of vector fields on $\mathbb{R}^2$ that quadratically vanish at the origin. A typical element of $\mathcal{F}$ can be written as $X=(ax^2+2bxy+cy^2)(u\,\partial_x+v\,\partial_y)$, where $a,b,c$ are functions that are possibly locally defined, and $u,v$ are smooth functions. The leaves of $\mathcal{F}$ consist of the punctured plane (the regular leaf) and the origin (singular leaf). Since on the punctured plane the conormal bundle is the zero vector bundle, it is extendable at the origin and is denoted $\mathcal{F}^\circ$. We deduce that the determinant line bundle $\wedge^{\mathrm{top}}\mathcal{F}^\circ$ actually involves the zeroth power of $\mathcal{F}^\circ$ and is the  trivial vector bundle: $\wedge^{0}\mathcal{F}^\circ=\mathbb{R}^n\times \mathbb{R}$.  The sections of this line bundle consists of the algebra of smooth functions on $\mathbb{R}^n$ and any constant function is a global nowhere-vanishing section on which any vector field acts trivially. It means that $\wedge^{0}\mathcal{F}^\circ$ is a trivial $\mathcal{F}$-module.

Following Examples 3.35 and 3.100 in \cite{laurent-gengouxUniversalLieInfinityAlgebroid2020}, there exists a universal Lie $\infty$-algebroid of $\mathcal{F}$ that has length~2:
\begin{center}
\begin{tikzcd}[column sep=0.7cm,row sep=0.4cm]
0\ar[r]&\Gamma(E_{-1})\ar[r,"{d}"]&\Gamma(E_{0})\ar[r,"\rho"]&\mathcal{F}
\end{tikzcd}
\end{center}
where the fiber of $E_{0}$ is $S^2(\mathbb{R}^2)\otimes \mathbb{R}^2$ and the fiber of $E_{-1}$ is $\mathbb{R}^2\otimes \mathbb{R}^2$. At the point $(x,y)\in \mathbb{R}^2$, the anchor map is defined as:
\begin{equation}
\begin{pmatrix}
a & b \\
b & c
\end{pmatrix}\otimes (u,v)\xmapsto{\hspace{1cm}} \big(ax^2+2bxy+cy^2\big)\,\big(u\,\partial_x+v\,\partial_y\big)
\end{equation}
whereas the map $d$ is defined as:
\begin{equation}
\begin{pmatrix}
 \alpha \\
\beta
\end{pmatrix}\otimes (u,v)\xmapsto{\hspace{1cm}}\begin{pmatrix}
2y\alpha & -x\alpha+y\beta \\
-x\alpha+y\beta & -2x\beta
\end{pmatrix}\otimes (u,v)
\end{equation}

For clarity, in the following, we will denote a generic constant section of $E_0$ as $A\otimes X$ where one has to understand that $A=\begin{pmatrix}
a & b \\
b & c
\end{pmatrix}$ is a $2\times2$ symmetric matrix, and where $X=(u,v)$ is a vector of $\mathbb{R}^2$. The quadratic function $ax^2+2bxy+cy^2$ will be denoted  $A_{(x,y)}$, and the vector field $u\,\partial_x+v\,\partial_y$ which corresponds to $X$ is denoted $\overline{X}$. Then, at any point $(x,y)\in\mathbb{R}^2$, the 2-brackets and 3-bracket defining the $L_\infty$-algebra are given (on constant sections) by:
\begin{align}
l_2\big(A\otimes X,B\otimes Y\big)&=\overline{X}\big(B_{(x,y)}\big)\,A\otimes Y-\overline{Y}\big(A_{(x,y)}\big)\,B\otimes X\label{bracket1}\\
l_2\left(A\otimes X,\begin{pmatrix}
 \alpha \\
\beta
\end{pmatrix}\otimes Y\right)&=\begin{pmatrix}
\alpha(axu+2bxv+cyv)+ \beta(ayu-axv)\\
\beta(axu+2byu+cyv)+ \alpha(cxv-cyu)
\end{pmatrix}\otimes Y\label{bracket2}\\
&\hspace{4cm}-\overline{Y}\big(A_{(x,y)}\big)\,\begin{pmatrix}
 \alpha  \\
\beta
\end{pmatrix}\otimes X\nonumber\\
l_3\Big(A_1\otimes X_1,A_2\otimes X_2,A_3\otimes X_3\Big)&=\overline{X_1}\circ\overline{X_2}\big(A_{3,(x,y)}\big)\begin{pmatrix}
 (b_1a_2-a_1b_2)x+\frac{1}{2}(c_1a_2-a_1c_2)y\\
(c_1b_2-b_1c_2)y+\frac{1}{2}(c_1a_2-a_1c_2)x
\end{pmatrix}\otimes X_3+\circlearrowleft\label{bracket3}
\end{align} 
where the circle arrow $\circlearrowleft$ means that we proceed to a circular permutation. Then the 2-brackets are extended to every section by applying the Leibniz rule \eqref{robinson}. The brackets have been precisely defined so that they satisfy the three equations:$\rho\circ l_2=l_2\circ \rho,\,
d\circ l_2=l_2\circ d$ and 
$l_2\circ l_2=-d\circ l_3$.

The Berezinian of $E$ is given by:
\begin{equation}
\mathrm{Ber}(E)=\wedge^2T^*\mathbb{R}^2\otimes\wedge^6 E_0\otimes \wedge^4 E^*_{-1}
\end{equation}
We will compute the modular class of $E$ by using Formula \eqref{gluttony4}.
The action of an element $A\otimes X$ on the first factor is given by the divergence of the vector field $\rho(A\otimes X)$. On the second factor, it is given by the adjoint action $l_2(A\otimes X,-)$ as given in Equation \eqref{bracket1}, and on the third factor by the dual of action \eqref{bracket2}. One can check that the action of every basis element of $E_0$:
\begin{align}
e_1=\begin{pmatrix}
1 & 0 \\
0 & 0
\end{pmatrix}\otimes (1,0)&,\qquad e_2=\begin{pmatrix}
0 & 1 \\
1 & 0
\end{pmatrix}\otimes (1,0),\qquad e_3=\begin{pmatrix}
0 & 0 \\
0 & 1
\end{pmatrix}\otimes (1,0)\\
e_4=\begin{pmatrix}
1 & 0 \\
0 & 0
\end{pmatrix}\otimes (0,1)&,\qquad e_5=\begin{pmatrix}
0 & 1 \\
1 & 0
\end{pmatrix}\otimes (0,1),\qquad e_6=\begin{pmatrix}
0 & 0 \\
0 & 1
\end{pmatrix}\otimes (0,1)
\end{align}
is zero on $\mathrm{Ber}(E)$. For example, by taking $e_1$, we have $\mathrm{div}\big(\rho(e_1)\big)=\mathrm{div}\big(x^2\,\partial_x\big)=2x$, $\mathcal{L}_{e_1}\big|_{\wedge^6E_0}=-2x\,\mathrm{id}_{\wedge^6E_0} $ and $\mathcal{L}_{e_1}\big|_{\wedge^4(E_{-1})^*}=0$. Let us show the two last results in more detail since this may not be obvious to the reader.

Let $\lambda=e_1\wedge\ldots\wedge e_6$ be the canonical section of $\wedge^6E_0$, then the action of $e_1$ on $\lambda$ is performed through the adjoint action \eqref{bracket1}, and it defines a factor $a_1$: $\mathcal{L}_{e_1}(\lambda)= a_1\, \lambda$. Thus we have to compute the bracket of $e_1$ with every basis elements of $E_0$, by using Equation \eqref{bracket1}, and sum the contributions to obtain $a_1$:
\begin{align}
[e_1,e_1]&=0,\quad
&[e_1,e_2]&=2y\,e_1-2x\,e_2,\quad
&[e_1,e_3]&=-2x\,e_3,\\
[e_1,e_4]&=2x\,e_4,\quad
&[e_1,e_5]&=2y\,e_4,\quad
&[e_1,e_6]&=0.
\end{align}
Not every terms on the right hand side will contribute to $a_1$ since for example the $e_1$ contribution in $[e_1,e_2]$ will be cancelled out because $e_1\wedge e_1=0$ so that $e_1\wedge[e_1,e_2]\wedge e_3\wedge \ldots\wedge e_6=-2x\,e_1\wedge\ldots\wedge e_6$. Hence we have: $a_1=-2x-2x+2x=-2x$, so that $a_1$ and $\mathrm{div}(e_1)$ cancel one another.

To conclude the calculation, we have to show that the action of $e_1$ on $\wedge^4 E^*_{-1}$ is trivial. To do this, we will instead compute the action of $e_1$ on the dual $\wedge^4E_{-1}$ and show that it is zero. Let $\mu=f_1\wedge\ldots\wedge f_4$ be the canonical section of $\wedge^4 E_{-1}$, where $f_1,\ldots,f_4$ are the basis elements of $E_{-1}$:
\begin{align}
f_1&=\begin{pmatrix}
1\\0
\end{pmatrix}\otimes (1,0)
,\qquad f_2=\begin{pmatrix}
0\\1
\end{pmatrix}\otimes (1,0)\\
f_3&=\begin{pmatrix}
1\\0
\end{pmatrix}\otimes (0,1)
,\qquad f_4=\begin{pmatrix}
0\\1
\end{pmatrix}\otimes (0,1)
\end{align}
Then the action of $e_1$ on $\mu$ defines a function $a_\mu$ by: $\mathcal{L}_{e_1}(\mu)=a_\mu\, \mu$. We compute $a_\mu$ by summing all contributions of the $\mathcal{L}_{e_1}$ on the $f_i$. We have, by using Equation \eqref{bracket2}:
\begin{align}
[e_1,f_1]&=x\, f_1-2x\,f_1=-x\,f_1,&[e_1,f_2]&=x\,f_2-2x\,f_2=-x\,f_2,\\
[e_1,f_3]&=x\, f_3,&[e_1,f_4]&=x\,f_4.
\end{align}
Then the action of $e_1$ on $\mu$ consists of the sum of the contributions on each basis vector, which sums up to zero. Hence the result: the modular one form $\theta_{\Omega}$, when applied on $e_1$, is zero. One can check that this result is true for the other basis vectors $e_i$, proving the nullity of the modular class of $E$, and thus that of the singular foliation $\mathcal{F}$.  Then, by Proposition \ref{prop:ok}, the Berezinian $\mathrm{Ber}(E)$ is isomorphic -- as a $\mathcal{F}$-module -- to the trivial line bundle $\wedge^{0}\mathcal{F}^\circ=\mathbb{R}^n\times\mathbb{R}$. In particular, the standard constant section $\Omega$ of the former can be sent to the constant section $\textbf{1}$ of the latter.

As a side remark, notice that although $\wedge^4 E_{-1}$ does not contribute to the modular class, the vector bundle $E_{-1}$ is nonetheless necessary to make the Jacobi identity on $E_0$ close up to homotopy. As a matter of fact, it is not known yet if $\mathcal{F}$ descends from a Lie algebroid. This illustrates the importance of relying on universal Lie $\infty$-algebroids to define modular classes of singular foliations. 


%

\subsection{The action of $\mathfrak{gl}_n(\mathbb{R})$ on $\mathbb{R}^n$}\label{exempleglau}
We study the singular foliation $\mathcal{F}_{\mathfrak{gl}_n}$ induced by the action of $\mathfrak{gl}_n(\mathbb{R})$ on $\mathbb{R}^n$, for $n\geq2$. 
As for Example \ref{quadra},  the leaves of $\mathcal{F}_{\mathfrak{gl}_n}$ consist of the punctured plane (the regular leaf) and the origin (singular leaf). Since on the punctured plane the conormal bundle is the zero vector bundle, it is extendable at the origin and is denoted $\mathcal{F}_{\mathfrak{gl}_n}^\circ$. We deduce that the determinant line bundle $\wedge^{\mathrm{top}}\mathcal{F}_{\mathfrak{gl}_n}^\circ$ actually involves the zeroth power of $\mathcal{F}_{\mathfrak{gl}_n}^\circ$ and is the  trivial vector bundle: $\wedge^{0}\mathcal{F}_{\mathfrak{gl}_n}^\circ=\mathbb{R}^n\times \mathbb{R}$. The sections of this line bundle consists of the algebra of smooth functions on $\mathbb{R}^n$ and any constant function is a global nowhere-vanishing section on which any vector field acts trivially. It means that $\wedge^{0}\mathcal{F}_{\mathfrak{gl}_n}^\circ$ is a trivial $\mathcal{F}_{\mathfrak{gl}_n}$-module. Computing the modular class of $\mathcal{F}_{\mathfrak{gl}_n}$ will show us if the trivial line bundle $\wedge^{0}\mathcal{F}_{\mathfrak{gl}_n}^\circ$ is isomorphic to the Berezinian of \emph{any} universal Lie $\infty$-algebroid of $\mathcal{F}_{\mathfrak{gl}_n}$ (see Proposition \ref{prop:ok}). 

Let $x_1, \ldots, x_n$ be the canonical coordinates on $\mathbb{R}^n$ and let $\big(e^{(0)}_{i,j}\big)_{1\leq i,j\leq n}$ be the canonical basis of $\mathfrak{gl}_n(\mathbb{R})$, i.e. $e^{(0)}_{i,j}$ is the $n\times n$-matrix with zero everywhere and a 1 at line $i$ and column $j$. Let $E_{0}=\mathbb{R}^n\rtimes \mathfrak{gl}_n$ be the action Lie algebroid associated to the action of $\mathfrak{gl}_n(\mathbb{R})$ on $\mathbb{R}^n$. The anchor and brackets are defined on constant sections by:
\begin{align}
\rho\big(e^{(0)}_{i,j}\big)&=x_i\partial_{x_j}\\
\big[e^{(0)}_{i,j},e^{(0)}_{k,l}\big]&=\delta_{jk}\,e^{(0)}_{i,l}-\delta_{li}\, e^{(0)}_{k,j}\label{crochetgln}
\end{align}
for every $1\leq i,j,k,l\leq n$. The bracket is the usual bracket on $\mathfrak{gl}_n(\mathbb{R})$, that we extend to every sections of $E_0$ by the Leibniz rule \eqref{robinson}.
 The anchor map has a non trivial kernel, spanned by elements of the following kind:
\begin{equation}\label{relation1}
x_ie^{(0)}_{j,k}-x_je^{(0)}_{i,k}
\end{equation}
Notice that the $i,j$-indices on the left hand side have a skew-symmetric property, hence showing that the minimal number of generators of $\mathrm{Ker}(\rho)$ is $\frac{n(n-1)}{2}$.
Let us set $E_{-1}=\mathbb{R}^n\times \wedge^2\mathbb{R}^n\oplus\ldots\oplus \wedge^2\mathbb{R}^n$, where the tensor product is made over $n$ copies of $\wedge^2\mathbb{R}^n$ and where, for clarity, we omitted the suspension operator that shifts the degree of the fiber by $-1$. Let $\big(e^{(1)}_{ij,k}\big)_{1\leq i<j\leq n}$ be the canonical basis of the $k$-th factor in the product $\wedge^2\mathbb{R}^n\oplus\ldots\oplus \wedge^2\mathbb{R}^n$. For $j<i$ we set $e^{(1)}_{ij,k}=-e^{(1)}_{ji,k}$ so that we have a set of vectors $\big(e^{(1)}_{ij,k}\big)_{1\leq i,j\leq n}$ that have this anti-symmetry property on the $i,j$-indices. 
 Then, define a vector bundle map $d^{(0)}\colon E_{-1}\to E_0$ by the following relations, for every $1\leq i,j,k\leq n$:
\begin{equation}
d^{(0)}\big(e^{(1)}_{ij,k}\big)=x_ie^{(0)}_{j,k}-x_je^{(0)}_{i,k}
\end{equation}
The bracket between a section of $E_0$ and a section of $E_{-1}$ can be first defined on constant sections as:
\begin{equation}\label{crochets}
\big[e^{(0)}_{i,j},e^{(1)}_{kl,m}\big]=\delta_{jk}\,e^{(1)}_{il,m}+\delta_{jl}\, e^{(1)}_{ki,m}-\delta_{im}\,e^{(1)}_{kl,j}
\end{equation}
Then it is extended to every sections of $E_{-1}$ by the Leibniz rule \eqref{robinson}.  

At the level of sections, the kernel of the map $d^{(0)}\colon E_{-1}\to E_0$ is not zero, since elements of the form:
\begin{equation}\label{relation4}
x_ie^{(1)}_{jk,l}+x_{j}e^{(1)}_{ki,l}+x_{k}e^{(1)}_{ij,l}
\end{equation}
span this kernel. Let us set $E_{-2}=\mathbb{R}^n\times \wedge^3\mathbb{R}^n\oplus\ldots\oplus \wedge^3\mathbb{R}^n$, where the tensor product is made over $n$ copies of $\wedge^3\mathbb{R}^n$ and where, for clarity, we omitted the suspension operator that shifts the degree of the fiber by $-1$. Let $\big(e^{(2)}_{ijk,l}\big)_{1\leq i<j<k\leq n}$ be the canonical basis of the $l$-th factor in the tensor product $\wedge^3\mathbb{R}^n\oplus\ldots\oplus \wedge^3\mathbb{R}^n$. Then for any $1\leq i,j,k,l\leq n$ we set $e^{(2)}_{ijk,l}=(-1)^{\sigma}e^{(2)}_{\sigma(i)\sigma(j)\sigma(k),l}$ where $\sigma$ is the unique permutation such that $\sigma(i)<\sigma(j)<\sigma(k)$. We define a vector bundle map $d^{(1)}\colon E_{-2}\to E_{-1}$ by:
\begin{equation}
d^{(1)}\big(e^{(2)}_{ijk,l}\big)=x_ie^{(1)}_{jk,l}+x_{j}e^{(1)}_{ki,l}+x_{k}e^{(1)}_{ij,l}
\end{equation}
The bracket between a section of $E_0$ and a section of $E_{-2}$ can be first defined on constant sections as:
\begin{equation}\label{crochets11}
\big[e^{(0)}_{i,j},e^{(2)}_{klm,r}\big]=\delta_{jk}\,e^{(2)}_{ilm,r}+\delta_{jl}\, e^{(2)}_{kim,r}+\delta_{jm}\, e^{(2)}_{kli,r}-\delta_{ir}\,e^{(2)}_{klm,j}
\end{equation}
Then it is extended to every sections of $E_{-2}$ by the Leibniz rule \eqref{robinson}. We do not need to define the bracket between two elements of $E_{-1}$ because is is not used to compute the modular class.

The same process repeats itself up to order $n$, since the dimension of $\wedge^n\mathbb{R}^n$ is one. In the end, we will have a geometric resolution:
\begin{center}
\begin{tikzcd}[column sep=0.7cm,row sep=0.4cm]
0\ar[r]&\Gamma(E_{-n+1})\ar[r,"{d}^{(n-2)}"]&\ldots\ar[r, "d^{(2)}"]&\Gamma(E_{-2})\ar[r,"{d}^{(1)}"]&\Gamma(E_{-1})\ar[r,"{d}^{(0)}"]&\Gamma(E_{0})\ar[r,"\rho"]&\mathcal{F}_{\mathfrak{gl}_n}
\end{tikzcd}
\end{center}
where, for every $0\leq p\leq n-1$, $E_{-p}=\mathbb{R}^n\times \wedge^{p+1}\mathbb{R}^n\oplus\ldots\oplus \wedge^{p+1}\mathbb{R}^n$, such that there are $n$ copies of $\wedge^{p+1}\mathbb{R}^n$. The differential is defined as:
\begin{equation}
d^{(p-1)}\big(e^{(p)}_{i_1\ldots i_{p+1},k}\big)=\sum_{\sigma\in Un(1,p)}\ (-1)^{\sigma}\,x_{\sigma(i_1)}e^{(p-1)}_{\sigma(i_2)\ldots \sigma(i_{p+1}),k}
\end{equation}
where $Un(1,p)$ designates the $(1,p)$-unshuffles, i.e. the permutations of $p+1$ elements such that $\sigma(i_2)<\ldots<\sigma(i_{p+1})$. The sign $(-1)^\sigma$ is the signature of the permutation. The bracket between $E_0$ and $E_{-p}$ is given by the following formula:
\begin{equation}
\big[e^{(0)}_{i,j},e^{(p)}_{i_1\ldots i_{p+1},k}\big]=\sum_{l=1}^{p+1}\ \delta_{j i_{l}}\,e^{(p)}_{i_1\ldots i\ldots i_{p+1},k}-\delta_{ik}\,e^{(p)}_{i_1\ldots i_{p+1},j}
\end{equation}
where the index $i$, in the term under the sum, is replacing the $l$-th index $i_l$. There are other brackets, such as the two bracket between two elements of degree lower than 0, but these are not necessary to compute the modular class.

The  Berezinian associated to the geometric resolution is
\begin{align*}
&\text{for $n$ even}&\qquad&\mathrm{Ber}(E)=\wedge^nT^*\mathbb{R}^n\otimes \wedge ^{n^2}E_0\otimes \wedge^{n\binom{n}{2}} (E_{-1})^*\otimes \wedge^{n\binom{n}{3}} E_{-2}\otimes \ldots\otimes \wedge^n (E_{-n+1})^{*}&\\
&\text{for $n$ odd}&\qquad&\mathrm{Ber}(E)=\wedge^nT^*\mathbb{R}^n\otimes \wedge ^{n^2}E_0\otimes \wedge^{n\binom{n}{2}} (E_{-1})^*\otimes \wedge^{n\binom{n}{3}} E_{-2}\otimes \ldots\otimes \wedge^n E_{-n+1}&
\end{align*}
For $n\geq1$ even (resp. odd),  let $\Omega=\omega\otimes \lambda\otimes \mu^{(1)*}\otimes \mu^{(2)}\otimes\ldots\otimes \mu^{(n-1)*}$ (resp. $\omega\otimes \lambda\otimes \mu^{(1)*}\otimes \mu^{(2)}\otimes\ldots\otimes \mu^{(n-1)}$)  be a section of the Berezinian, where $\mu^{(p)}$ is a nowhere vanishing section of $\wedge^{n\binom{n}{p}} E_{-p}$, and where $\mu^{(p)*}$ is a  nowhere vanishing section of  $\wedge^{n\binom{n}{p}} (E_{-p})^*$. Since the modular class does not depend on the chosen volume form, we could pick up the one defined by the following constant sections:
\begin{align}
\omega&=dx_1\wedge\ldots\wedge dx_n\\
\lambda&= \underset{1\leq k\leq n}{\bigwedge} e^{(0)}_{1,k}\wedge \underset{1\leq k\leq n}{\bigwedge} e^{(0)}_{2,k}\wedge\ldots \wedge  \underset{1\leq k\leq n}{\bigwedge}e^{(0)}_{n,k}\\
\mu^{(p)}&=\mu^{(p)}_1\wedge\ldots\wedge\mu^{(p)}_n
\end{align}
where, for any $1\leq k,p\leq n$, we set:
\begin{equation}
\mu^{(p)}_k=\underset{2\leq i_2<\ldots <i_{p+1}\leq n}{\bigwedge} e^{(p+1)}_{1i_{2}\ldots i_{p+1},k}\wedge\underset{3\leq i_2<\ldots <i_{p+1}\leq n}{\bigwedge} e^{(p+1)}_{2i_{2}\ldots i_{p+1},k}\wedge \ldots \wedge e^{(p+1)}_{n-p\ldots n,k}
\end{equation}
 We will then compute the modular class of $E$ by using Formula \eqref{gluttony4}. 
The computation of the value of $\theta_\Omega$ is performed in several steps: we pick up a constant section $e^{(0)}_{i,j}$ of $E_0$, and we compute independently the value of the action of $e^{(0)}_{i,j}$ on each term $\omega, \lambda$ and $\mu^{(p)}$, for every $1\leq p\leq n-1$. Then we add up all the contributions to obtain the value of $\theta_\Omega\big(e^{(0)}_{i,j}\big)$.

First, the action of $e^{(0)}_{i,j}$ on $\wedge^nT^*\mathbb{R}^n$ gives the divergence of $\rho(e^{(0)}_{i,j})$:
\begin{equation}
\mathcal{L}_{e^{(0)}_{i,j}}(\omega)=\mathrm{div}\big(x_i\partial_{x_j}\big)\,\omega=\delta_{ij}\,\omega
\end{equation}
Second, the action of $e^{(0)}_{i,j}$ on $\wedge^{n^2}E_0$ is obtained from the adjoint action $a\to [e^{(0)}_{i,j},a]$ defined in Equation \eqref{crochetgln}. Since $\lambda$ is a totally alternating tensor, the only part of $[e^{(0)}_{i,j},e^{(0)}_{k,l}]$ that will contribute to the value of $\theta_\Omega\big(e^{(0)}_{i,j}\big)$ is the part that is again proportional to $e^{(0)}_{k,l}$. For this reason, if $i\neq j$, one can check that the action of $e^{(0)}_{i,j}$ on $\lambda$ is null. Now if $i=j$, it is still trivial, but for another reason: the only elements that will contribute will be $(e^{(0)}_{i,l})_{1\leq l\leq n}$ and $(e^{(0)}_{l,i})_{1\leq l\leq n}$, but for every $l$, $[e^{(0)}_{i,i}, e^{(0)}_{i,l}]=e^{(0)}_{i,l}$
and $[e^{(0)}_{i,i}, e^{(0)}_{l,i}]=-e^{(0)}_{l,i}$, so that the sum of the contributions of $e^{(0)}_{i,l}$ and of $e^{(0)}_{l,i}$ will always cancel. Hence:
\begin{equation}\label{contribution1}
\mathcal{L}_{e^{(0)}_{i,j}}(\lambda)=0
\end{equation}

Now, let us compute the contribution of the action of $e^{(0)}_{i,j}$ on $\mu^{(1)*}\in \Gamma\big(\wedge^{n\binom{n}{2}} E_{-1}^*\big)$ to the value of the modular class. We will proceed as follows: we will compute the action of $e^{(0)}_{i,j}$ on $\mu^{(1)}$, and then, by duality,  we will revert the sign of the result to obtain the action of $e^{(0)}_{i,j}$ on $\mu^{(1)*}$. Applying the same argument as above to the bracket defined in Equation \eqref{crochets}, we notice that if $i\neq j$ then the action of $e^{(0)}_{i,j}$ on $\mu^{(1)}$ is trivial. In the case that $i=j$, the only basis elements that will contribute are $e^{(1)}_{il,m}, e^{(1)}_{ki,m}$ and $e^{(1)}_{kl,i}$, for some values of $k,l\neq i$ and $m\neq i$, for if we have simultaneously e.g. $k=i$ and $m=i$, from Equation \eqref{crochets}, we obtain:
\begin{equation}
[e^{(0)}_{i,i},e^{(1)}_{il,i}]=e^{(1)}_{il,i}-e^{(1)}_{il,i}=0
\end{equation}
Then one can suppose that, when they appear in the subsequent formulas, both $k,l$ and $m$ are different than $i$.
Then we have:
\begin{align}
[e^{(0)}_{i,i},e^{(1)}_{il,m}]&=e^{(1)}_{il,m}\\
[e^{(0)}_{i,i},e^{(1)}_{ki,m}]&=e^{(1)}_{ki,m}\\
[e^{(0)}_{i,i},e^{(1)}_{kl,i}]&=-e^{(1)}_{kl,i}
\end{align}
The first equation is valid for $i<l\leq n$ and $m\neq i$, and hence contribute with a factor of $(n-i)(n-1)$ to the modular class. The second equation is valid for $1\leq k< i$ and $m\neq i$, thus contributing to a factor $(i-1)(n-1)$ to the modular class.
Finally the last equation is valid for those $k,l$ that are not equal to $i$.
Since we have the strict inequality $k<l$, there are $\frac{n(n-1)}{2}-(i-1)-(n-i)=\frac{n(n-1)}{2}-(n-1)$ such terms. Due to the minus sign, these terms would contribute to a factor $(n-1)-\frac{n(n-1)}{2}$ to the modular class. Adding all these contributions  together we deduce how $e^{(0)}_{i,i}$ acts on $\mu^{(1)}$:
\begin{equation}
\mathcal{L}_{e^{(0)}_{i,i}}(\mu^{(1)})=(n-i)(n-1)+(i-1)(n-1)+(n-1)-\frac{n(n-1)}{2}=\frac{n(n-1)}{2}
\end{equation}
Reverting the sign, we deduce that $\mathcal{L}_{e^{(0)}_{i,i}}\big(\mu^{(1)*}\big)=-\binom{n}{2}\,\mu^{(1)*}$. 

Now turning to the action of $e^{(0)}_{i,j}$ on $\mu^{(2)}$. The above discussion implies that $\mathcal{L}_{e^{(0)}_{i,j}}\big(\mu^{(2)}\big)=0$ if $i\neq j$. Then, when $i=j$, several terms contribute to the value of the modular class, for Equation \eqref{crochets11} gives:
\begin{align}
[e^{(0)}_{i,i},e^{(2)}_{ilm,r}]&=e^{(2)}_{ilm,r}\\
[e^{(0)}_{i,i},e^{(2)}_{kim,r}]&=e^{(2)}_{kim,r}\\
[e^{(0)}_{i,i},e^{(2)}_{kli,r}]&=e^{(2)}_{kli,r}\\
[e^{(0)}_{i,i},e^{(2)}_{klm,i}]&=-e^{(2)}_{klm,i}
\end{align}
when $k,l,m,r\neq i$. More precisely, terms of the kind $e^{(2)}_{ilm,r}$ contribute for a value of $(n-1)\frac{(n-i)(n-i-1)}{2}$, terms of the kind $e^{(2)}_{kli,r}$ contribute for a value of $(n-1)\frac{(i-1)(i-2)}{2}$, terms of the kind $e^{(2)}_{kim,r}$ contribute for a value of $(n-1)(i-1)(n-i)$ times, and those of the kind $e^{(2)}_{klm,i}$ contribute for a value of $-\Big(\binom{n}{3}-\frac{(n-i)(n-i-1)}{2}-\frac{(i-1)(i-2)}{2}-(i-1)(n-i)\Big)$. A short calculation then shows that adding them up gives $\mathcal{L}_{e^{(0)}_{i,i}}\big(\mu^{(2)}\big)=2\binom{n}{3}\mu^{(2)}$. 
More generally, one can check that for $1\leq p\leq n-1$, we have:
\begin{equation}\label{contribution2}
\mathcal{L}_{e^{(0)}_{i,i}}\big(\mu^{(p)}\big)=p\,\binom{n}{p+1}\,\mu^{(p)}\end{equation}
and that $\mathcal{L}_{e^{(0)}_{i,j}}\big(\mu^{(p)}\big)=0$ if $i\neq j$.

From this, we first deduce that if $i\neq j$ then $\theta_\Omega\big(e^{(0)}_{i,j}\big)=0$ 
since the action of  $e^{(0)}_{i,j}$ on $\Omega$ is trivial. When $i=j$ however, we can compute the value of $\theta_\Omega\big(e^{(0)}_{i,i}\big)$ by adding up all previous contributions. By keeping in mind that there are minus sign coming up when acting on $\mu^{(p)*}$, we obtain: 
\begin{align}
\theta_\Omega\big(e^{(0)}_{i,i}\big)&=1+0+\sum_{p=1}^{n-1}(-1)^{p}p\binom{n}{p+1}\label{eqform}\\
&=1+\sum_{p=1}^{n-1}(-1)^{p}(p+1)\binom{n}{p+1}-\sum_{p=1}^{n-1}(-1)^{p}\binom{n}{p+1}\\
&=1+\sum_{p=1}^{n-1}(-1)^{p}n\binom{n-1}{p}+\sum_{p=2}^{n}(-1)^{p}\binom{n}{p}\\
&=1+n\sum_{p=1}^{n-1}(-1)^{p}\binom{n-1}{p}+(1-1)^n+n-1\\
&=n\sum_{p=0}^{n-1}(-1)^{p}\binom{n-1}{p}\\
&=n(1-1)^{n-1} \label{eqform2}
\end{align}
Since $n\geq2$, we obtain that
$\theta_\Omega\big(e^{(0)}_{i,j}\big)=0$ for every $1\leq i,j\leq n$, so the modular class $\theta^{\mathcal{F}_{\mathfrak{gl}_n}}$ of the singular foliation $\mathcal{F}_{\mathfrak{gl}_n}$ is zero. 

As in Example \ref{quadra} then, the Berezinian is  isomorphic -- as a $\mathcal{F}_{\mathfrak{gl}_n}$-module -- to the trivial line bundle $\wedge^{0}\mathcal{F}_{\mathfrak{gl}_n}^\circ=\mathbb{R}^n\times\mathbb{R}$. In particular, the standard constant section $\Omega$ of the former can be sent to the constant section $\textbf{1}$ of the latter. 
However this should not be taken as a general rule:
at the end of  Example \ref{exampleson} we will have the same determinant line bundle but the modular class of the singular foliation there would not vanish.  This proves that  the generators of the foliation carry more informations than the mere leaves.  

 In the case where $n=1$, the singular foliation $\mathcal{F}_{\mathfrak{gl}_1}$ on $\mathbb{R}$ is generated by the vector field $x\partial_x$, which is the Euler vector field on $\mathbb{R}$. 
The following argument shows that its modular class does not vanish:
 the singular foliation $\mathcal{F}_{\mathfrak{gl}_1}$ admits a universal Lie $E_0=\infty$-algebroid $\mathbb{R}\rtimes \mathbb{R}$ with anchor $\rho\big(e_{1,1}^{(0)}\big)=x\partial_x$, with no kernel. The divergence of this vector field is 1, and the action of $e_{1,1}^{(0)}$ on $\wedge^1E_0$ is trivial, so the modular differential 1-form of this foliation is not zero, and rather satisfies $ \theta_\Omega\big(e^{(0)}_{1,1}\big)=1$. This proves nonetheless that Formula \eqref{eqform}-\eqref{eqform2}  is still valid for $n=1$, and can be summarized as follows:
 \begin{equation}
 \theta_\Omega\big(e^{(0)}_{i,j}\big)=n\,0^{n-1}\hspace{1cm}
 \text{for every $1\leq i,j\leq n$}
 \end{equation}
 However, based on the discussion following Equation \eqref{espresso}, the modular class of $\mathcal{F}_{\mathfrak{gl}_1}$ is not zero.

\subsection{The action of $\mathfrak{so}_n(\mathbb{R})$ on $\mathbb{R}^n$}
\label{exampleson}
Let us now turn to the action of $\mathfrak{so}_n(\mathbb{R})$ on $\mathbb{R}^n$, for $n\geq2$. It formalizes Example \ref{ex3} and will then meet the same obstructions found in Example \ref{ex4}.  The singular foliation $\mathcal{F}_{\mathfrak{so}_n}$ consists of all the vector fields that preserve the quadratic function:
\begin{equation}\label{equtile}
\varphi(x_1,\ldots,x_n)=\frac{1}{2}\sum_{i=1}^nx_i^2
\end{equation} 
The leaves consist of concentric $n-1$-dimensional spheres (regular leaves) and the origin (singular leaf).
The function defined in Equation \eqref{equtile} is homogeneous with isolated singularity (the origin). Then a geometric resolution of the induced singular foliation $\mathcal{F}_{\mathfrak{so}_n}$ is known and has been given by Koszul, see Example 3.36  in \cite{laurent-gengouxUniversalLieInfinityAlgebroid2020}:
\begin{center}
\begin{tikzcd}[column sep=0.7cm,row sep=0.4cm]
0\ar[r]&\Gamma(\wedge^nT\mathbb{R}^n)\ar[r,"\iota_{\mathrm{d}\varphi}"]&\ldots\ar[r,"\iota_{\mathrm{d}\varphi}"]&\Gamma(\wedge^3T\mathbb{R}^n)\ar[r,"\iota_{\mathrm{d}\varphi}"]&\Gamma(\wedge^2T\mathbb{R}^n)\ar[r,"\iota_{\mathrm{d}\varphi}"]&\mathcal{F}_{\mathfrak{so}_n}\ar[r,"\iota_{\mathrm{d}\varphi}"]&0
\end{tikzcd}
\end{center}
\noindent We then set $E_{-i}=\wedge^{i+2}T\mathbb{R}^n$ 
The Lie $\infty$-algebroid structure is given in Example 3.101 of \cite{laurent-gengouxUniversalLieInfinityAlgebroid2020}. In particular, the 2-bracket between a constant section $\partial_{\{k,l\}}=\partial_{x_k}\wedge\partial_{x_l}$ of $E_0$ and a section $\partial_I=\partial_{i_1}\wedge\ldots\wedge \partial_{i_{|I|}}$ of any other $E_{-|I|+2}$ (where $|I|\geq2$ is the length of the multi-index $I=\{i_1,\ldots,i_{|I|}\}$) is given by:
\begin{equation}\label{bracketson}
l_2\big(\partial_{\{k,l\}},\partial_I\big)=\sum_{j=1}^{|I|}\delta_{ki_j}(-1)^{j}\partial_{x_l}\wedge\partial_{I/\{i_j\}}-\delta_{li_j}(-1)^{j}\partial_{x_k}\wedge\partial_{I/\{i_j\}}
\end{equation}
where the multi-vector field $\partial_{I/\{i_j\}}$ is the multi-vector field $\partial_I$ in which the $j$-th term $\partial_{i_j}$ has been removed. The Kronecker deltas come from the fact that, e.g. the term $\frac{\partial\varphi}{\partial_{x_k}\partial_{x_{i_j}}}$ is not vanishing only when $k=i_{j}$, due to the very specific form of the function $\varphi$.

From Equation \eqref{bracketson}, one can check that $l_2\big(\partial_{\{k,l\}},\partial_I\big)$ does not give any contribution proportional to $\partial_I$, since for it to happen one should have, e.g. some $i_j=k$, and at the same time, $l=i_j$ so that  $\delta_{ki_j}(-1)^{j-1}\partial_{x_l}\wedge\partial_{I/\{i_j\}}=\pm\partial_I$. But this would imply that $k=l$ and thus that $\partial_{\{k,l\}}=0$. Hence, the adjoint action of $E_0$ on $\wedge^{\mathrm{top}}E_{-|I|+2}$ is necessarily trivial. The anchor map acts on $\partial_{\{k,l\}}$ as follows:
\begin{equation}\label{eq:anchorso}
\rho\big(\partial_{\{k,l\}}\big)=\partial_{x_k}(\varphi)\partial_{x_l}-\partial_{x_l}(\varphi)\partial_{x_k}=x_k\partial_{x_l}-x_l\partial_{x_k}
\end{equation}
Since $k\neq l$, this vector field has a null divergence. As a conclusion, the action of $E_0$ on the Berezinian $\mathrm{Ber}(E)$ is trivial, and hence so is the action of $\mathcal{F}_{\mathfrak{so}_n}$ on $\mathrm{Ber}(E)$. Thus, the modular class of $\mathcal{F}_{\mathfrak{so}_n}$ is zero.


 On the union of regular leaves (the punctured space),  the set of vector fields defined in Equation \eqref{eq:anchorso} induces a regular distribution $F$. The conormal bundle of this distribution is a rank 1 vector bundle $F^\circ\subset T^*(\mathbb{R}^n\backslash\{0\})$ generated by the radial one-form $dr^2=\sum_{i=1}^nx_idx_i$, where we set $r^2=x_1^2+\ldots+x_n^2$. This is a transverse volume form and it is easy to check that it is invariant under the action of $F$ because it is exact.  The conormal vector bundle does not extend at the origin so the vanishing of the modular class of $\mathcal{F}_{\mathfrak{so}_n}$ does not bring any new informations on $F$ that was not already known from the study of the regular leaves. 
 
However, adding the Euler vector field $\epsilon=\sum_{i=1}^nx_i\partial_{x_i}$ to the singular foliation $\mathcal{F}_{\mathfrak{so}_n}$ induces a new singular foliation $\mathcal{F}$ because the bracket  \eqref{bracketson} with the Euler vector field is zero. The singular leaf does not change, but there is now only one regular leaf:  the entire punctured space. Then, the conormal bundle of the regular leaf is a rank zero vector bundle as in Example \ref{exempleglau}, so it extends at the origin as a rank zero vector bundle $\mathcal{F}^\circ$. The determinant line bundle of the latter  is then a trivial line bundle $\wedge^0\mathcal{F}^\circ=\mathbb{R}^n\times\mathbb{R}$, and a trivial $\mathcal{F}$-module. 
Although the situation is similar as the one in  Example \ref{exempleglau}, so that one could naively expect that there is an isomorphism of $\mathcal{F}$-modules between this trivial line bundle and the Berezinian of any universal Lie $\infty$-algebroid of $\mathcal{F}$,  
 the following argument proves that it is not the case.
  Indeed we can chose as a universal Lie $\infty$-algebroid of $\mathcal{F}$ the sum of that of $\mathcal{F}_{\mathfrak{so}_n}$ and that of Example \ref{exaeuler}. This latter example implies that the modular class of this new universal Lie $\infty$-algebroid is not zero on any neighborhood of the origin. Then this Berezinian, and any other Berezinian of other universal Lie $\infty$-algebroids of $\mathcal{F}$, are not trivial $\mathcal{F}$-modules. 
Hence, although the foliation $\mathcal{F}$ possesses exactly the same leaves as that of Example \ref{exempleglau}, and induces the same trivial representation on the trivial line bundle $\wedge^0\mathcal{F}^\circ$, we see that the two foliations have quite a different behavior on their respective Berezinians. 
This proves again that the some information about a singular foliation is not contained in the leaves, but in the family of vector fields generating it.

\section*{Acknowledgments}

I would like to thank Camille Laurent-Gengoux for introducing me to the notion of modular class and for helping me on various occasions during the long process through which this paper emerged. I would like to thank Georges Skandalis and St\'ephane Vassout for hosting me at University Paris VII Diderot in 2019, during which the beginning of this work has been done. This work has been completed and finalized in 2021-2022 at Steklov Institute and I would like to thank Nikolai Mnev for its warm welcome. Thanks for Alfonso Garmendia, Raquel Caseiro, Joao Nuno Mestre, Lennart Obster and Zan Grad for very insightful comments about connections and representations of almost Lie algebroids. Funding: This work was supported by the french governement [ANR Singstar - ANR-14-CE25-0012] and the government of the Russian Federation [grant No. 075-15-2019-1620].

 \bibliography{Maths}

\end{document}